\documentclass[11pt]{amsart}

\usepackage{lmodern}
\usepackage{microtype}
\usepackage{aliascnt} 
\usepackage[pagebackref]{hyperref}
\usepackage[nameinlink,capitalise]{cleveref}
\usepackage{mathrsfs}
\usepackage{amsrefs}
\usepackage{amsthm}
\usepackage{enumerate}
\usepackage{graphicx}
\usepackage{scrextend}
\usepackage{amsfonts, amsmath, amssymb,comment}  
\usepackage{times}
\usepackage{mathdots}%
\usepackage{nccmath}
\usepackage{mathtools}
\usepackage{ulem}
\usepackage{enumitem}
\usepackage{multicol}
		{\end{pmatrix}\end{medsize}}%
\usepackage{dsfont,bbm}
\usepackage{rotating}
\usepackage{lscape}
\usepackage{url}
\usepackage{thmtools, thm-restate}
\usepackage{blkarray}
\usepackage[a4paper,margin=2.5cm]{geometry}
\usepackage{multirow}

\allowdisplaybreaks

\usepackage{pifont}
\usepackage{tikz}
\usetikzlibrary{topaths}
\tikzstyle{every picture}+=[remember picture,inner xsep=0,inner ysep=0.25ex]
\usetikzlibrary{calc}

\DeclareFontFamily{U}{mathx}{\hyphenchar\font45 }
\DeclareFontShape{U}{mathx}{m}{n}{
	<5><6><7><8><9><10><10.95><12><14.4><17.28><20.74><24.88> mathx10
}{}
\DeclareSymbolFont{mathx}{U}{mathx}{m}{n}
\DeclareMathAccent{\widecheck}{0}{mathx}{"71}

\usepackage{kbordermatrix}
\renewcommand{\kbldelim}{(}
\renewcommand{\kbrdelim}{)}
\renewcommand{\kbrowstyle}{\displaystyle}
\renewcommand{\kbcolstyle}{\displaystyle}

\makeatletter
\newcommand*{\sublabel}[1]{%
	\let\old@currentlabel\@currentlabel%
	\renewcommand{\@currentlabel}{\theenumii}%
	\label{#1}%
	\let\@currentlabel\old@currentlabel%
}
\makeatother
\allowdisplaybreaks

\graphicspath{ {./figs/} }

\DeclareMathOperator{\Span}{span}

\DeclareMathOperator{\Rank}{rank}

\DeclareMathOperator{\Card}{card}

\DeclareMathOperator{\Dep}{Dep}
\DeclareMathOperator{\Depn}{DepN}
\DeclareMathOperator{\Ker}{Ker}
\DeclareMathOperator{\mult}{mult}
\DeclareMathOperator{\col}{col}
\DeclareMathOperator{\row}{row}

\DeclareMathOperator{\coef}{coef}
\DeclareMathOperator{\rank}{rank}

\makeatletter
\def\widebreve{\mathpalette\wide@breve}
\def\wide@breve#1#2{\sbox\z@{$#1#2$}%
	\mathop{\vbox{\m@th\ialign{##\crcr
				\Kern0.08em\brevefill#1{0.8\wd\z@}\crcr\noalign{\nointerlineskip}%
				$\hss#1#2\hss$\crcr}}}\limits}
\def\brevefill#1#2{$\m@th\sbox\tw@{$#1($}%
	\hss\resizebox{#2}{\wd\tw@}{\rotatebox[origin=c]{90}{\upshape(}}\hss$}
\makeatletter

\newcommand{\RR}{\mathbb R}

\newcommand{\NN}{\mathbb N}

\newcommand{\cT}{\mathcal T}

\newcommand{\Sym}{\mathbb{S}}

\newcommand{\cS}{\mathcal{S}}
\newcommand{\cE}{\mathcal{E}}

\newcommand{\cH}{\mathcal H}

\newcommand{\cZ}{\mathcal Z}
\newcommand{\cC}{\mathcal C}

\newcommand{\benu}{\begin{enumerate}}
	\newcommand{\eenu}{\end{enumerate}}
\newcommand{\bop}{\begin{opomba}}
	\newcommand{\eop}{\end{opomba}}

\newcommand{\Bor}{\mathrm{Bor}}

\newcommand{\tr}{\mathrm{tr}}
\newcommand{\supp}{\mathrm{supp}}

\newcommand{\CB}[1]{\begin{color}{blue}#1\end{color}}
\newcommand{\CR}[1]{\begin{color}{red}#1\end{color}}

\newtheorem{theorem}{Theorem}[section]
\newtheorem{corollary}[theorem]{Corollary}
\newtheorem{lemma}[theorem]{Lemma}
\newtheorem{proposition}[theorem]{Proposition}

\theoremstyle{definition}

\definecolor{green-new}{rgb}{0.0, 0.5, 0.0}
\newcommand{\CG}[1]{\begin{color}{green-new}#1\end{color}}
\definecolor{cyan}{rgb}{0.0, 0.8, 1.0}

\theoremstyle{definition}

\newtheorem{remark}[theorem]{Remark}

\numberwithin{equation}{section}

\begin{document}
	

	\title[]
	{The matricial univariate rational truncated moment problem}
	
	\author[A. Zalar]{Alja\v z Zalar${}^{1}$}
	\address{Alja\v z Zalar, 
        University of Ljubljana, 
		Faculty of Computer and Information Science \& 
		Faculty of Mathematics and Physics, \&
		Institute of Mathematics, Physics and Mechanics, Ljubljana, Slovenia.}
	\email{aljaz.zalar@fri.uni-lj.si}
	\thanks{${}^1$Supported by the ARIS (Slovenian Research and Innovation Agency)
		research core funding No.\ P1-0288 and grants No.\ J1-50002, J1-60011, 70017.}
	
	\author[I. Zobovi\v c]{Igor Zobovi\v c${}^{2}$}
	\address{Igor Zobovi\v c, 
		Institute of Mathematics, Physics and Mechanics, Ljubljana, Slovenia.}
	\email{igor.zobovic@imfm.si}
	\thanks{${}^2$Supported by the ARIS (Slovenian Research and Innovation Agency)
		research core funding No.\ P1-0288.}

	\hypersetup{ pdftitle={The Matricial Univariate Rational Truncated Moment Problem}, pdfauthor={Aljaz Zalar and Igor Zobovic}, pdfsubject={Truncated matricial rational moment problems}, pdfkeywords={rational moment problem, matrix-valued measure, truncated moment problem, Gaussian quadrature} }
    
	\begin{abstract}
        We solve the matricial univariate rational truncated moment problem on the real line and on a half-line. We give explicit necessary and sufficient conditions for the existence of a positive matrix-valued representing measure and show that every solvable problem admits a finitely atomic representing measure whose total atomic multiplicity equals the rank of the associated moment matrix. The rational problem is reduced to an ordinary matricial moment problem with the additional requirement that the representing measure avoid the real poles of the rational data. The main new ingredient is a simultaneous prescribed-node result: the smallest attainable multiplicities at finitely many prescribed points can be realized by a single minimal representing measure. Our proofs are constructive and use flat extensions, block-column relations, and localizing conditions.
		\looseness=-1
	\end{abstract}

	\subjclass[2020]{Primary 44A60; Secondary 65D32, 47A57, 47A20, 15A04, 47N40.}
    \keywords{Rational moment problem, truncated matricial moment problem, matrix-valued representing measure, prescribed atoms, Gaussian quadrature, moment matrix.}
	\date{\today}
	\maketitle
	
	\section{Introduction}
	\label{introduction}  

Let $i,j\in\mathbb{Z}$ with $i\leq j$ and $\ell\in \NN$, {where $\NN := \{1, 2, \ldots\}$}. We use the notation
\[
  [i;j]\coloneqq\{i,i+1,\ldots,j\}
  \qquad \text{and}\qquad
  [\ell]\coloneqq[1;\ell]=\{1,2,\ldots,\ell\}, \quad [0] := \emptyset.
\]
For $k\in\mathbb{N}\cup\{0\}$ and $p\in\mathbb{N}$, let
$\mathbb{R}[x]_{\leq k}$ denote the vector space of real univariate
polynomials of degree at most $k$, and let $\Sym_p(\mathbb{R})$ denote
the space of real symmetric $p\times p$ matrices.

Let $K\subseteq\mathbb{R}$ be closed. Fix pairwise distinct real
numbers
\[
  t_1,\ldots,t_{k_1}\in\mathbb{R},
  \qquad
  k_1\in\mathbb{N}\cup\{0\},
\]
and pairwise distinct real numbers
\[
  u_1,\ldots,u_{k_2}\in\RR,
  \quad
  v_1,\ldots,v_{k_2}\in(0,\infty),
  \qquad
  k_2\in\mathbb{N}\cup\{0\}.
\]
Let
\[
  e_0\in \NN\cup\{0\},\; e_1,\ldots,e_{k_1}\in\mathbb{N}
  \qquad\text{and}\qquad
  f_1,\ldots,f_{k_2}\in\mathbb{N},
\]
and define
\begin{equation}\label{eq:def-of-q}
  q(x)
  \coloneqq
  \prod_{j=1}^{k_1}(x-t_j)^{2e_j}
  \prod_{j=1}^{k_2}((x-u_j)^2+v_j)^{f_j}.
\end{equation}
Set
\[
  2n
  \coloneqq
  \sum_{j=0}^{k_1}2e_j
  +
  \sum_{j=1}^{k_2}2f_j,
\]
and consider the finite-dimensional vector space of rational functions
\begin{equation}\label{eq:def-of-rational-space}
  \mathcal{R}^{(2n)}
  \coloneqq
  \left\{
    \frac{g}{q}:
    g\in\mathbb{R}[x]_{\leq 2n}
  \right\}.
\end{equation}

The \textbf{matricial univariate rational $K$--truncated moment
problem}
asks for a
characterization of the linear mappings
\[
  \mathcal{L}\colon
  \mathcal{R}^{(2n)}\longrightarrow\Sym_p(\mathbb{R})
\]
for which there exists a positive $\Sym_p(\mathbb{R})$-valued measure
$\mu$, supported on $K$, such that
\[
  \mathcal{L}(R)
  =
  \int_K R\,d\mu
  \qquad
  \text{for every }R\in\mathcal{R}^{(2n)}.
\]
The notion of a positive matrix-valued measure is recalled in
Subsection~\ref{subsec:matrix-measure}. Any measure $\mu$ satisfying
the preceding conditions is called a
\textbf{$K$--representing measure for $\mathcal{L}$}.

Equivalently, the matricial rational moment problem can be formulated by prescribing the integrals of monomials and rational functions of the form $x^i$, $\frac{1}{(x-t_j)^\alpha}$, $\frac{1}{((x-u_j)^2+v_j)^\beta}$, and $\frac{x}{((x-u_j)^2+v_j)^\beta}$, where the ranges of the indices are determined by the multiplicities appearing in~\eqref{eq:def-of-q}. This explicit formulation is the same in the scalar and matrix-valued settings, since it follows solely from the partial-fraction decomposition of the rational functions involved; see~\cite{NZ25} for the full formulation. 

The equivalent formulation in terms of prescribed integrals connects the matricial rational moment problem directly with the classical theory of univariate rational moment problems. In the scalar case, rational moment problems on bounded and unbounded intervals have a long history. Their full versions were studied extensively by Jones, Nj{\aa}stad and Thron \cite{JTW80,JT81,JNT84,Nja85,NT86,Nja87,Nja88}, primarily by means of orthogonal and quasi-orthogonal rational functions. Rational truncated moment problems were subsequently considered from the viewpoint of positivity and duality; see~\cite{Cha94} and the subsequent work~\cite{NZ25}, which also corrects some results from~\cite{Cha94}. The approach in~\cite{NZ25} relies, in the case of the real line, on the classical solution of the truncated Hamburger moment problem due to Curto and Fialkow~\cite{CF91}. For general closed semialgebraic subsets of the real line, the nonsingular case in~\cite{NZ25} essentially uses a result of {di Dio and Schm\"udgen \cite[Proposition 2 and Corollary 6]{DS18}} (or ~\cite[Theorem~1.30]{Sch17}). In the singular case, the approach relies on the truncated analogue of the Riesz--Haviland theorem~\cite{CF08}, together with the descriptions of univariate polynomials that are nonnegative on such sets due to Kuhlmann, Marshall, and Schwartz~\cite{KM02,KMS05}.

The present paper makes two main contributions. First, using the characterization in \cite{ZZ+} of the smallest attainable multiplicity at a prescribed point, we show that the smallest attainable multiplicities at any finite collection of prescribed points can be realized simultaneously by a single minimal matrix-valued representing measure. Second, by applying this result to the real poles of the common denominator, we obtain explicit necessary and sufficient conditions for the rational truncated moment problem on $\mathbb{R}$ and $[0,\infty)$.  We also aim to apply the resulting theory to bivariate matricial truncated moment problems on suitable algebraic curves, thereby extending results known in the scalar setting \cite{CF02,CF04,CF05}. Following the reduction methods developed for scalar problems in \cite{Zal21,Zal22a,Zal22b,Zal23,YZ24,YZ26}, and analogously to those used for tracial moment problems in \cite{BZ18,BZ21}, such problems can be reduced to univariate rational moment problems. The theory developed here therefore provides the machinery needed to address their matrix-valued counterparts. The study of bivariate matrix-valued truncated moment problems was initiated by Kimsey and Trachana in \cite{KT22}, where the quadratic and cubic cases were investigated.

A basic idea in the study of rational moment problems is to convert them into ordinary polynomial moment problems. In our setting, the rational functions have the common denominator $q$ defined in~\eqref{eq:def-of-q}, whose real zeros have even multiplicity.
Given a linear mapping $\mathcal{L}$ on the corresponding
finite-dimensional space of rational functions, we associate with it a
linear mapping $L$ on the space of polynomials of the appropriate
degree, defined by
$
  L(f)\coloneqq\mathcal{L}\left(\frac{f}{q}\right).
$
A representing matrix measure for the rational problem corresponds to a representing matrix measure for $L$ that assigns zero mass to every real zero of $q$. Once such a measure has been found, the representing measure for the original rational data is recovered by multiplying it by $q$. Thus, the rational moment problem reduces to an ordinary truncated matricial moment problem with prescribed support constraints. 
Related truncated matrix moment problems with a prescribed gap in the support were studied by Zagorodnyuk \cite{Zag15} using generalized-resolvent methods.
In the scalar setting, this correspondence was used in~\cite{NZ25} to solve the truncated rational moment problem for unbounded closed sets, as well as to complete the solution in the compact case by adding the missing support-avoidance conditions. Questions concerning the existence of quadrature rules with few nodes have also been studied using optimization and moment-theoretic methods; see~\cite{RS18,RT26,BKM26}. Of particular relevance to the present setting is the recent work of Bechere, Kuhlmann, and Mourrain~\cite{BKM26} on scalar quadrature formulas whose nodes avoid a prescribed finite set.

Viewing the rational moment problem as an ordinary matricial truncated
moment problem whose representing measures must avoid the real poles
connects it naturally with the theory of matricial Gaussian quadrature
rules with prescribed atoms. Indeed, excluding a real pole $\lambda$
amounts to finding a representing measure for the associated polynomial
data whose mass at $\lambda$ is zero. Results on minimal representing
measures with prescribed atoms can then be used to construct such a
measure. More generally, one may prescribe both an atom and the
multiplicity of its mass. In the nonsingular case, where the truncated
moment matrix is positive definite, this problem was solved
in~\cite{ZZ25}; the corresponding scalar result was established earlier in~\cite{BKRSV20}. The
singular matricial case was treated in~\cite{ZZ+}, where the column
relations of the moment matrix and their propagation must also be taken
into account. The scalar moment-theoretic approach of~\cite{NZ+}
provides an alternative proof of the one-node result and extends it to
several prescribed nodes. Matricial Gaussian quadrature rules have also
been studied through orthogonal matrix polynomials and the zeros of the
associated matrix polynomial; see~\cite{DLR96,DD02,DS03}. In these
approaches, the atoms and matricial masses are determined after a
suitable odd moment, or equivalently an appropriate extension of the
original moment data, has been fixed. The perspective adopted
in~\cite{ZZ25,ZZ+} is different: rather than fixing the extension in
advance, one constructs an extension whose minimal representing measure
has the prescribed behavior at a given point. A related problem was studied in \cite{FKM24}, where the set of all possible masses at a fixed point, over all representing measures for a given truncated matricial moment sequence, was described and the maximal possible mass was identified. In the multivariate setting, possible atoms of representing measures were characterized in \cite{MS23+}, while possible masses at a prescribed point were investigated in \cite{MS23++}. The focus here is instead on minimal representing measures and on simultaneously attaining the smallest possible multiplicities at finitely many prescribed points. The resulting conditions
are expressed in terms of ranks, kernels, and column relations of moment
and localizing moment matrices. The present rational problem follows the same perspective, with the real poles playing the role of points at which the mass of the representing measure must vanish. 
The main new ingredient needed for the rational problem is the simultaneous nature of this construction. The main technical result shows that the individual lower bounds for the multiplicities at these points can be attained simultaneously by a single minimal representing matrix measure. This simultaneous attainment is then combined with the Hamburger or Stieltjes support conditions to solve the rational problem.\\

For a real $m\times n$ matrix $A$, let 
$\mathcal{C}(A) \coloneqq \{Ax:x\in\mathbb{R}^n\} \subseteq\mathbb{R}^m$ denote its column space. For matrices $B_1,\ldots,B_\ell$ with the same number of columns, write \[ \col(B_i)_{i\in[\ell]} \coloneqq \begin{pmatrix} B_1\\ \vdots\\ B_\ell \end{pmatrix}. \]
    The main result of this paper is the following.

\begin{theorem}\label{thm:main-1}
  Let
  $K=\mathbb{R}$ or $K=[0,\infty)$,
  let
  $
    \Lambda\coloneqq\{t_1,\ldots,t_{k_1}\}\subseteq\mathbb{R}
  $
  be the set of real zeros of the polynomial $q$ defined in~\eqref{eq:def-of-q}, and let
  \[
    \mathcal{L}\colon\mathcal{R}^{(2n)}
    \longrightarrow\Sym_p(\mathbb{R})
  \]
  be a linear mapping, where $\mathcal{R}^{(2n)}$ is defined in~\eqref{eq:def-of-rational-space}. Define the associated linear mapping
  \begin{equation}\label{Riesz-functional}
    L\colon\mathbb{R}[x]_{\leq 2n}
    \longrightarrow\Sym_p(\mathbb{R}),
    \qquad
    L(f)\coloneqq\mathcal{L}(fq^{-1}).
  \end{equation}
  Let
  $S_i\coloneqq L(x^i)$,
    $i\in[0;2n]$,
  and let
  \[
    M(n)
    \coloneqq
    \bigl(S_{i+j-2}\bigr)_{i,j=1}^{n+1}
    =
    \bigl(L(x^{i+j-2})\bigr)_{i,j=1}^{n+1}
  \]
  be the truncated moment matrix of $L$.

  For each $r\in[k_1]$, define the shifted moment matrix
  \[
    \mathcal{M}^{(r)}
    \coloneqq
    \bigl(L((x-t_r)^{i+j-2})\bigr)_{i,j=1}^{n+1},
  \]
  and denote its $j$-th column by $\mathbf{y}_j^{(r)}$, $j\in [(n+1)p]$. Set
  \begin{align*}
    A^{(r)}
    \coloneqq
    \Bigl\{
      i\in[p]\colon\,
      &\text{there exists }k\in[n]\text{ such that}\\
      &\mathbf{y}_{kp+i}^{(r)}
      \in
      \Span\!\left\{
        \mathbf{y}_{p+1}^{(r)},\ldots,
        \mathbf{y}_{kp+i-1}^{(r)}
      \right\},\\
      &\mathbf{y}_{(k-1)p+i}^{(r)}
      \notin
      \Span\!\left\{
        \mathbf{y}_1^{(r)},\ldots,
        \mathbf{y}_{(k-1)p+i-1}^{(r)}
      \right\}
    \Bigr\},
  \end{align*}
  {where $\Span(\emptyset) = \{0\}$.}
    Equivalently, \(A^{(r)}=A_{\cT_{t_r}}\) in the notation of Subsection
    \ref{subsec:column-dependencies}.
  Then the following statements are equivalent:
  \begin{enumerate}
    \item\label{thm:main-1-i}
    The mapping $\mathcal{L}$ admits a $K$--representing measure.

    \item\label{thm:main-1-ii}
    The mapping $\mathcal{L}$ admits a finitely atomic
    $K$--representing measure.

    \item\label{thm:main-1-iii}
    The mapping $\mathcal{L}$ admits a $(\rank M(n))$--atomic
    $K$--representing measure.

    \item\label{thm:main-1-iv}
    The following conditions hold:
    \begin{enumerate}
      \item\label{thm:main-1-iv-a}
      The moment matrix $M(n)$ is positive semidefinite.

      \item\label{thm:main-1-iv-b}
      If $K=\RR$, then
      $
        \mathcal{C}\!\left(
          \col(S_{n+i})_{i\in[n]}
        \right)
        \subseteq
        \mathcal{C}\!\left(M(n-1)\right).
      $

      \item\label{thm:main-1-iv-d}
      If $K=[0,\infty)$, then
      $
        \mathcal{H}_x
        \coloneqq
        \bigl(S_{i+j-1}\bigr)_{i,j=1}^{n}
      $
      is positive semidefinite and
      $$
        \mathcal{C}\!\left(
          \col(S_{n+i})_{i\in[n]}
        \right)
        \subseteq
        \mathcal{C}\!\left(\mathcal{H}_x\right).
      $$
      \item\label{thm:main-1-iv-c} 
      $
        A^{(r)}=\emptyset
      $
      for every $r\in[k_1]$.
    \end{enumerate}
  \end{enumerate}
\end{theorem}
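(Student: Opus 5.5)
We prove the cycle \eqref{thm:main-1-iii}$\Rightarrow$\eqref{thm:main-1-ii}$\Rightarrow$\eqref{thm:main-1-i}$\Rightarrow$\eqref{thm:main-1-iv}$\Rightarrow$\eqref{thm:main-1-iii}; the first two implications are trivial. Everything runs through the dictionary between $\mathcal{L}$ and $L$. If $\mu$ is a $K$--representing measure for $\mathcal{L}$, then $\nu\coloneqq q^{-1}\mu$ represents $L$, since $\int f\,d\nu=\int (f/q)\,d\mu=\mathcal{L}(f/q)=L(f)$. This $\nu$ is a positive $\Sym_p(\RR)$-valued measure on $K$ with $\nu(\Lambda)=0$. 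Here we use that $q>0$ off $\Lambda$, because the real zeros of $q$ have even multiplicity and the quadratic factors are strictly positive. For the fact that $\mu$ does not charge $\Lambda$, note that $\mathcal{L}$ is defined on $1/q$, so $q^{-1}$ must be $\mu$-integrable; hence $\nu$ is finite and $\mu(\Lambda)=0$ is forced. Conversely, if $\nu$ represents $L$ on $K$ with $\nu(\Lambda)=0$, then $\mu\coloneqq q\,\nu$ represents $\mathcal{L}$. If $\nu$ is finitely atomic, then $\mu$ has the same atoms, and the rank of each mass is unchanged. So the rational problem is equivalent to the $K$--truncated matricial problem for $L$ with the extra requirement that no atom lie in $\Lambda$.

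For \eqref{thm:main-1-i}$\Rightarrow$\eqref{thm:main-1-iv}, conditions \eqref{thm:main-1-iv-a}, \eqref{thm:main-1-iv-b} and \eqref{thm:main-1-iv-d} are the known necessary conditions for the matricial truncated Hamburger and Stieltjes problems applied to $\nu$. For \eqref{thm:main-1-iv-c}, recall the characterization from \cite{ZZ+} recalled in Subsection~\ref{subsec:column-dependencies}. It says that the smallest attainable multiplicity of the mass at a point $t$, over all representing measures of $L$, is governed by $A_{\cT_t}$. In particular, a representing measure with zero mass at $t$ exists only if $A_{\cT_t}=\emptyset$. Since $\nu$ has zero mass at every $t_r$, we get $A^{(r)}=A_{\cT_{t_r}}=\emptyset$.

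For \eqref{thm:main-1-iv}$\Rightarrow$\eqref{thm:main-1-iii}, conditions \eqref{thm:main-1-iv-a}--\eqref{thm:main-1-iv-d} guarantee, by the matricial Hamburger/Stieltjes theory, that $L$ has $(\rank M(n))$--atomic $K$--representing measures. These arise from flat extensions $M(n+1)$ obtained by choosing the free odd moment $S_{2n+1}$. By \eqref{thm:main-1-iv-c} and the one-node result of \cite{ZZ+}, for each single $r$ there is such a minimal measure with zero mass at $t_r$. The remaining task is to find one minimal measure avoiding all $t_1,\ldots,t_{k_1}$ at once. This is exactly the simultaneous prescribed-node result announced in the introduction, which we would invoke here: the smallest multiplicities at finitely many prescribed points are attained by a single minimal measure. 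For $K=[0,\infty)$ we need the version of that result which keeps the support in $[0,\infty)$. It preserves positivity of the localizing matrix $\mathcal{H}_x$ together with the column condition, so the support stays in $[0,\infty)$. Applying it with all target multiplicities equal to $0$ gives a $(\rank M(n))$--atomic $\nu$ on $K$ avoiding $\Lambda$, and $\mu=q\nu$ finishes the proof.

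The main obstacle is this simultaneous step. The admissible choices of the extension parameter (the odd moment $S_{2n+1}$, or the analogous free block in the Stieltjes case) that avoid a given $t_r$ should form the complement of a small, non-generic set. For instance, mass at $t_r$ occurs exactly when $\det$ of a certain pencil evaluated at $t_r$ vanishes, which is a proper algebraic condition on the parameter once $A^{(r)}=\emptyset$. A finite union of such proper algebraic subsets cannot exhaust the parameter set, so a common choice exists. In the singular case one has to check that the column relations propagate correctly so that each bad set really is proper. For $K=[0,\infty)$ one must also check that the parameter set keeping the support in $[0,\infty)$ is not itself contained in that exceptional union.
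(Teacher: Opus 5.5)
Your outline matches the paper's short proof of this theorem. You reduce to $L$ with $\nu(\Lambda)=\mathbf{0}_p$, obtain \eqref{thm:main-1-iv-a}, \eqref{thm:main-1-iv-b}, \eqref{thm:main-1-iv-d} from the Hamburger/Stieltjes theorems, and deduce \eqref{thm:main-1-iv}$\Rightarrow$\eqref{thm:main-1-iii} from the simultaneous prescribed-node result, as in Corollaries~\ref{co:corollary_mainTheorem_RR} and~\ref{co:corollary_mainTheorem_RR_halfline}. The gap is that this simultaneous result, Theorems~\ref{th:mainTheorem_RR} and~\ref{th:mainTheorem_halfline}, is the substance of the proof. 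It is not available from \cite{ZZ+}, which treats a single node. You invoke it as a black box while calling it the main obstacle, and your sketch of it does not work as stated. Its one concrete mechanism, ``mass at $t_r$ iff the determinant of a certain pencil vanishes at $t_r$'', fails in the singular case. A block column relation of a flat extension $M(n+1)$ with invertible leading coefficient has determinant of degree $(n+1)p$. A minimal measure has total multiplicity only $\rank M(n)<(n+1)p$, so the determinant carries surplus zeros unrelated to the atoms. Indeed, the relation $Q$ in the proof of Theorem~\ref{th:theorem-subsequent-moments} has $\det Q(x-t)$ divisible by $(x-t)^{(n+1)p-\rank M(n)}$ even when $\mult_\mu t=0$. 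You also never identify the admissible parameter set or show that each bad set is a proper algebraic subset of it, which is exactly where the work lies.

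Three facts are missing.
\begin{enumerate}
\item In the basis centered at a reference point $t$, permute $\Dep(X_{\cT_t}^n)$ to the end. Then you may vary only the $p_0\times p_0$ block $\widehat Z_{t,1}$ of $T_{2n+1}$ and define the other blocks through the column relations via $\widehat G$. \emph{Every} such choice gives a flat extension; this is claim~\eqref{eq:range-Zt}, proved with Lemma~\ref{le:rank-completion}.
\item By Theorem~\ref{th:theorem-subsequent-moments} and Corollary~\ref{co:theorem-subsequent-moments-corollary-m=0}, $\mult_\mu t_j=\Card A_{\cT_{t_j}}$ holds iff adjoining $\col(K_{t_j},\mathcal E_{t_j}(S_{2n+1}))$ to $\col(\mathcal H_{t_j},K_{t_j}^{\mathsf T})$ raises the rank by exactly $p_0=p-\Card\Dep(X_{\cT_{t_j}}^n)$. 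Via Proposition~\ref{pr:lemmaDep}, this reduces to the upper-left $p_0\times p_0$ block.
\item By the change of basis, that block equals $\widehat K_{t_j,1}^{\mathsf T}\mathcal H_{t_j}^\dagger\widehat K_{t_j,1}+\widehat Z_{t,1}+E_j$, with $E_j$ from \eqref{eq:main-R-Ej} independent of $\widehat Z_{t,1}$. Lemma~\ref{le:lemma-for-increasing-rank-v2} then gives the criterion whenever $\widehat Z_{t,1}+E_j\succ0$.
\end{enumerate}
With these, a large multiple of $I_{p_0}$ works for all $j$ at once. For $K=[0,\infty)$, the choice $\widehat Z_{0,1}=\widehat K_{0,1}^{\mathsf T}\mathcal H_0^\dagger\widehat K_{0,1}+\alpha I_{p_0}$ with $\alpha$ large also secures $\mathcal H_x(n)\succeq0$. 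Given these facts your Zariski-genericity argument would also go through, since each good set is a nonempty Zariski-open subset of $\Sym_{p_0}(\RR)$ and the Stieltjes-admissible set has nonempty interior. Without them the main step is unproved.

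A smaller point concerns \eqref{thm:main-1-i}$\Rightarrow$\eqref{thm:main-1-iv-c}. You use $\mult_\nu t\geq\Card A_{\cT_t}$ for an arbitrary representing measure, but it is only established for finitely atomic ones (Corollary~\ref{co:theorem-subsequent-moments-finitely-atomic}, proved via a flat extension of the atomic measure). It is also not what Subsection~\ref{subsec:column-dependencies} recalls. You first need a finitely atomic representing measure that still vanishes on $\Lambda$, which the paper obtains from \cite[Corollary~5.3]{MS23+}.
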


\begin{remark}
    The set $A^{(r)}$ identifies the coordinate indices whose
    first dependence occurs at the same block level with and without the
    zeroth block column. Its cardinality is the lower bound for the
    multiplicity at $t_r$ established in Corollary~\ref{co:theorem-subsequent-moments-finitely-atomic} (where $t = t_r$).
\end{remark}


\subsection*{Reader's guide} 
Section~\ref{sec:prel} introduces the notation and preliminary results used throughout the paper. In particular, we recall the basic notions concerning matrix-valued measures, Riesz mappings, moment matrices, block column relations, changes of basis, and the truncated matricial Hamburger and Stieltjes moment problems.
Section~\ref{sec:multiplicity-prescribed-atom} derives a formula for the multiplicity of a prescribed atom in a given minimal representing measure and establishes the consequences needed in the proofs of the main results. Section~\ref{sec:solution-to-the-K-MRTMP} treats representing measures on $\mathbb{R}$. There we prove that the minimal possible multiplicities at finitely many prescribed points can be attained simultaneously, and use this result to prove Theorem~\ref{thm:main-1} for $K=\mathbb{R}$. Section~\ref{sec:solution-to-the-K-MRTMP-v2} develops the corresponding results for representing measures supported on $[0,\infty)$ and completes the proof of Theorem~\ref{thm:main-1} in the half-line case. Finally, in Section \ref{sec:example} we demonstrate the statement of Theorem~\ref{thm:main-1} on a numerical example.

	\section{Preliminaries}
	\label{sec:prel}

\subsection{Notation}
Let $m,m_1,m_2\in\mathbb{N}$. We denote by
$M_{m_1\times m_2}(\mathbb{R})$ the space of real
$m_1\times m_2$ matrices and write
$
  M_m(\mathbb{R})\coloneqq M_{m\times m}(\mathbb{R})
$
for short. For $A\in M_{m_1\times m_2}(\mathbb{R})$, we denote the
linear span of its columns, called the \textbf{column space} of $A$,
by $\mathcal{C}(A)$.

We denote by $I_m$ the $m\times m$ identity matrix and by
$\mathbf{0}_{m_1\times m_2}$ the $m_1\times m_2$ zero matrix. We also
write
$
  \mathbf{0}_m\coloneqq\mathbf{0}_{m\times m}.
$
Finally, $M_m(\mathbb{R}[x])$ denotes the space of $m\times m$ matrices
with entries in $\mathbb{R}[x]$. Its elements are called
\textbf{matrix polynomials}.

Let $p\in\mathbb{N}$. For $A\in\Sym_p(\mathbb{R})$, the notation
$A\succeq0$ (respectively, $A\succ0$) means that $A$ is positive
semidefinite (psd) (respectively, positive definite (pd)). We denote
the cone of positive semidefinite matrices in $\Sym_p(\mathbb{R})$ by
$\Sym_p^{\succeq0}(\mathbb{R})$.

For a polynomial $f\in\mathbb{R}[x]$, we denote its zero set by
\[
  \mathcal{Z}(f)
  \coloneqq
  \{x\in\mathbb{R}:f(x)=0\}.
\]
If $a\in\mathcal{Z}(f)$ and $f \neq 0$, we denote the multiplicity of $a$ as a zero
of $f$ by $\mult_f a$; {if $a\notin\mathcal{Z}(f)$, we write $\mult_f a = 0$.}

Let $A_k$, $k\in[i;j]$, be matrices of compatible sizes. We define
their block row and block column concatenations, respectively, by
\[
  \row(A_k)_{k\in[i;j]}
  \equiv
  \row(A_i,\ldots,A_j)
  \coloneqq
  \begin{pmatrix}
    A_i & A_{i+1} & \cdots & A_j
  \end{pmatrix}
\]
and 
\[
  \col(A_k)_{k\in[i;j]}
  \equiv
  \col(A_i,\ldots,A_j)
  \coloneqq
  \begin{pmatrix}
    A_i\\
    A_{i+1}\\
    \vdots\\
    A_j
  \end{pmatrix}.
\]
{Throughout, matrix dimensions are allowed to be zero; a matrix with zero rows or zero columns is regarded as an empty matrix. In particular, a block row or block column concatenation over an empty index set is understood as the empty matrix of the compatible size.}

\subsection{Matrix measures}
\label{subsec:matrix-measure}

Let $\Bor(\mathbb{R})$ denote the Borel $\sigma$-algebra on
$\mathbb{R}$. A mapping
\[
  \mu=(\mu_{ij})_{i,j=1}^p
  \colon
  \Bor(\mathbb{R})\to\Sym_p(\mathbb{R})
\]
is called a \textbf{$p\times p$ Borel matrix-valued measure}, or
equivalently a \textbf{positive $\Sym_p(\mathbb{R})$-valued measure},
if the following conditions hold:
\begin{enumerate}
  \item For every $i,j\in[p]$, the set function
  $
    \mu_{ij}\colon\Bor(\mathbb{R})\to\mathbb{R}
  $
  is a signed measure.

  \item For every $\Delta\in\Bor(\mathbb{R})$, one has
  $
    \mu(\Delta)\succeq0.
  $
\end{enumerate}

A positive $\Sym_p(\mathbb{R})$-valued measure $\mu$ is called
\textbf{finitely atomic} if there exists a finite set
$F\subseteq\mathbb{R}$ such that
$
  \mu(\mathbb{R}\setminus F)=\mathbf{0}_p.
$
Equivalently, $\mu$ can be written as
\[
  \mu=\sum_{j=1}^{\ell}\delta_{x_j}A_j
\]
for some $\ell\in\mathbb{N}$, points $x_j\in\mathbb{R}$, and matrices
$A_j\in\Sym_p^{\succeq0}(\mathbb{R})$.

A point $x\in\mathbb{R}$ is called an \textbf{atom} of $\mu$ if
$
  \mu(\{x\})\neq\mathbf{0}_p.
$
If $x_1,\ldots,x_\ell$ are pairwise distinct and
$A_j\neq\mathbf{0}_p$ for every $j\in[\ell]$, then the atoms of $\mu$
are precisely $x_1,\ldots,x_\ell$, and
\[
  A_j=\mu(\{x_j\})
\]
is called the \textbf{mass} of $\mu$ at $x_j$. The rank of $A_j$ is
called the \textbf{multiplicity} of $x_j$ in $\mu$ and is denoted by
\[
  \mult_\mu x_j\coloneqq\rank A_j.
\]
For a point $x\in\mathbb{R}$ that is not an atom of $\mu$, we set
$
  \mult_\mu x\coloneqq0.
$
We say that $\mu$ is \textbf{$r$--atomic} if
\[
  \sum_{j=1}^{\ell}\rank A_j=r.
\]
Thus, throughout this paper, the size of a finitely atomic matrix-valued measure is counted with multiplicity, that is, by the sum of the ranks of its matricial masses, rather than by the number of distinct support points.

Let $\mu$ be a positive $\Sym_p(\mathbb{R})$-valued measure. Its
\textbf{trace measure} is defined by
$
  \tau
  \coloneqq
  \tr(\mu)
  \coloneqq
  \sum_{i=1}^p\mu_{ii}.
$
A polynomial $f\in\mathbb{R}[x]$ is called
\textbf{$\mu$-integrable} if $f\in L^1(\tau)$. In this case, its
matrix integral with respect to $\mu$ is defined entrywise by
\[
  \int_{\mathbb{R}}f\,d\mu
  \coloneqq
  \left(
    \int_{\mathbb{R}}f\,d\mu_{ij}
  \right)_{i,j=1}^p.
\]

\subsection{Representing matrix measures for linear operators}
\label{subsec:representing-matrix-measures}

Assume the notation introduced in Section~\ref{introduction}. Given a
linear mapping
\[
  \mathcal{L}\colon\mathcal{R}^{(2n)}
  \longrightarrow\Sym_p(\mathbb{R}),
\]
we associate with it the linear operator
\[
  L\colon\mathbb{R}[x]_{\leq 2n}
  \longrightarrow\Sym_p(\mathbb{R}),
  \qquad
  L(f)\coloneqq\mathcal{L}(fq^{-1}).
\]
{A positive $\Sym_p(\mathbb{R})$-valued measure
$\mu$, supported on $K$, is called a \textbf{$K$--representing measure for $L$} if
\begin{equation*}
    L(f) =
  \int_K f\,d\mu
  \qquad
  \text{for every }f\in\RR[x]_{\leq 2n}.
\end{equation*}
}

Let $\Lambda\subseteq K$ be a Borel set. Define
\begin{align*}
  \mathcal{M}_{\mathcal{L},K}
  &\coloneqq
  \left\{
    \mu:
    \mu\text{ is a $K$--representing measure for }\mathcal{L}
  \right\},\\
  \mathcal{M}_{L,K}
  &\coloneqq
  \left\{
    \mu:
    \mu\text{ is a $K$--representing measure for }L
  \right\},\\
  \mathcal{M}_{L,K,\Lambda}
  &\coloneqq
  \left\{
    \mu\in\mathcal{M}_{L,K}:
    \mu(\Lambda)=\mathbf{0}_p
  \right\}.
\end{align*}
We denote by
$\mathcal{M}_{\mathcal{L},K}^{(\mathrm{fa})}$,
$\mathcal{M}_{L,K}^{(\mathrm{fa})}$,
$\mathcal{M}_{L,K,\Lambda}^{(\mathrm{fa})}$
the subsets of
$\mathcal{M}_{\mathcal{L},K}$, $\mathcal{M}_{L,K}$, and
$\mathcal{M}_{L,K,\Lambda}$, respectively, consisting of all finitely
atomic measures.

Let $\mu$ be a positive $\Sym_p(\mathbb{R})$-valued measure with
$\supp(\mu)\subseteq K$, and let $f$ be $\mu$-integrable. We denote by
$f\cdot\mu$ the $\Sym_p(\mathbb{R})$-valued measure defined by
\[
  (f\cdot\mu)(E)
  \coloneqq
  \int_E f\,d\mu,
  \qquad E\in\Bor(\mathbb{R}).
\]
If $\mu$ is regarded as a measure on $K$, it is enough to take
$E\in\Bor(K)$.

\begin{proposition}
  \label{pr:proposition-measure-sets}
  Let $q$ be as in~\eqref{eq:def-of-q}, and set
  $
    \Lambda\coloneqq K \cap \{t_1,\ldots,t_{k_1}\}.
  $ 
  Then the following statements hold:
  \begin{enumerate}
    \item
    $
      \mathcal{M}_{\mathcal{L},K}\neq\emptyset
      \Longleftrightarrow
      \mathcal{M}_{L,K,\Lambda}\neq\emptyset.
    $
    \smallskip
    \item
    $\mathcal{M}_{\mathcal{L},K}^{(\mathrm{fa})}\neq\emptyset
      \Longleftrightarrow
      \mathcal{M}_{L,K,\Lambda}^{(\mathrm{fa})}\neq\emptyset.
    $
    \smallskip
    \item\label{phi-bijection}
    The map
    \[
      \Phi\colon
      \mathcal{M}_{L,K,\Lambda}
      \Longrightarrow
      \mathcal{M}_{\mathcal{L},K},
      \qquad
      \Phi(\mu)\coloneqq q\cdot\mu,
    \]
    is a bijection, with inverse
    \[
      \Phi^{-1}(\nu)
      =
      q^{-1}\cdot\nu.
    \]
  \end{enumerate}
\end{proposition}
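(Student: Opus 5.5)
The plan is to prove part~(3) first and then read off parts~(1) and~(2). Parts~(1) and~(2) follow from~(3) because $\Phi$ and its inverse send finitely atomic measures to finitely atomic measures. Multiplying by $q$ or by $q^{-1}$ does not enlarge the support, and away from $\Lambda$ it only rescales each atom's mass by a positive scalar. Throughout we use that $q\geq 0$ on $\mathbb{R}$: the real zeros $t_j$ occur with even exponents $2e_j$, and each factor $(x-u_j)^2+v_j$ is strictly positive since $v_j>0$. Hence $q>0$ on $K\setminus\Lambda$, and $q^{-1}$ is a well-defined positive Borel function on $K\setminus\Lambda$.

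\emph{$\Phi$ is well defined.} Let $\mu\in\mathcal{M}_{L,K,\Lambda}$. Since $\deg q=2n-2e_0\le 2n$, we have $q\in\mathbb{R}[x]_{\le 2n}$, so $q$ is $\mu$-integrable. For each Borel set $E$,
\[
(q\cdot\mu)(E)=\int_E q\,d\mu\succeq0,
\]
because $q\ge 0$ and $\mu$ is psd-valued. To justify the last inequality, note that for every vector $v$ the scalar measure $v^T\mu v$ is positive, and $v^T\bigl(\int_E q\,d\mu\bigr)v=\int_E q\,d(v^T\mu v)\ge 0$. The entries of $q\cdot\mu$ are signed measures because $q\in L^1(\tau)$. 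For $R=g/q\in\mathcal{R}^{(2n)}$, the function $R$ is defined $\mu$-a.e.\ since $\mu(\Lambda)=\mathbf{0}_p$. This gives
\[
\int_K R\,d(q\cdot\mu)=\int_{K\setminus\Lambda}\frac{g}{q}\,q\,d\mu=\int_K g\,d\mu=L(g)=\mathcal{L}(R).
\]
Here the middle step uses the chain rule $d(q\cdot\mu)=q\,d\mu$, applied entrywise to the signed measures $\mu_{ij}$, or better to the positive scalar measures $v^T\mu v$ followed by polarization. So $\Phi(\mu)\in\mathcal{M}_{\mathcal{L},K}$.

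\emph{The inverse map is well defined.} Let $\nu\in\mathcal{M}_{\mathcal{L},K}$. The first task is to show $q^{-1}$ is $\nu$-integrable. Since $1/q=1\cdot q^{-1}\in\mathcal{R}^{(2n)}$, the trace of $\mathcal{L}(1/q)$ equals $\int (1/q)\,d\tau_\nu$, which is finite. This forces $\tau_\nu(\Lambda)=0$, because $1/q=+\infty$ on $\Lambda$ and the representing integral must be finite. Hence $\nu(\Lambda)=\mathbf{0}_p$, and $\mu:=q^{-1}\cdot\nu$ is a positive $\Sym_p(\mathbb{R})$-valued measure on $K$. Set it equal to $\mathbf{0}_p$ on $\Lambda$; this is consistent, since it is forced by the definition of the integral over a $\tau_\nu$-null set. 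Then $\mu(\Lambda)=\mathbf{0}_p$. For $f\in\mathbb{R}[x]_{\le 2n}$, the function $f/q$ lies in $\mathcal{R}^{(2n)}$ and is $\nu$-integrable, and
\[
\int f\,d\mu=\int \frac{f}{q}\,d\nu=\mathcal{L}(f/q)=L(f).
\]
In particular $f$ is $\mu$-integrable, so $\mu\in\mathcal{M}_{L,K,\Lambda}$.

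\emph{Bijectivity.} We have $\Phi^{-1}(\Phi(\mu))=q^{-1}q\cdot\mu=\mu$ on $K\setminus\Lambda$, and both sides vanish on $\Lambda$. Likewise $\Phi(\Phi^{-1}(\nu))=\nu$, using $\nu(\Lambda)=\mathbf{0}_p$. Thus $\Phi$ is a bijection with the stated inverse.

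The main obstacle is the measure-theoretic care in the inverse direction. There one must deduce that $\nu$ puts no mass on the poles, and that $q^{-1}$ and all the $f/q$ are integrable against the matrix measure. I would handle this by reducing to the trace measure $\tau_\nu$ together with the scalar measures $v^T\nu v$, using the interpretation of the representing identity as including integrability of every element of $\mathcal{R}^{(2n)}$, in particular $1/q$.
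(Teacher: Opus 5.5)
Your argument is correct and follows essentially the same route as the paper, which gives no separate proof and only says the argument is identical to the scalar case it cites. That route is: multiply or divide by $q$, use the integrability of $1/q\in\mathcal{R}^{(2n)}$ to exclude mass at the real poles, and note that both maps preserve finite atomicity. Your reduction to the trace measure and to the scalar measures $v^{\mathsf T}\mu v$ is exactly what carries that scalar argument over to the matrix-valued setting.
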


\begin{proof}
  The proof is identical to that of the scalar case; see
  \cite[Proposition~2.1]{NZ25}.
\end{proof}

\subsection{Riesz mapping and the moment matrix}
\label{subsec:Riesz-mapping-moment-matrix}

Let
$
  L\colon\mathbb{R}[x]_{\leq 2n}\to\Sym_p(\mathbb{R})
$
be a linear operator. The matrices
    $S_i\coloneqq L(x^i)$, $i\in[0;2n]$,
are called the \textbf{matricial moments} of $L$.

Conversely, given a sequence
\begin{equation}\label{def:sequence}
  \mathcal{S}
  \equiv
  \mathcal{S}^{(2n)}
  \coloneqq
  (S_0,S_1,\ldots,S_{2n})
  \in
  \bigl(\Sym_p(\mathbb{R})\bigr)^{2n+1},
\end{equation}
we denote by
\[
  L_{\mathcal{S}}\colon
  \mathbb{R}[x]_{\leq 2n}\to\Sym_p(\mathbb{R})
\]
the unique linear operator satisfying
\[
  L_{\mathcal{S}}(x^i)=S_i,
  \qquad i\in[0;2n],
\]
and call it the \textbf{Riesz mapping associated with
$\mathcal{S}$}. {A $K$--representing measure for $L_{\mathcal{S}}$ will also be called a \textbf{$K$--representing measure for $\mathcal{S}$}.}

For a sequence $\mathcal{S}$ as in~\eqref{def:sequence}, we define its
\textbf{$n$-th truncated moment matrix} by
\begin{equation}\label{def:moment-matrix}
  M(n)
  \equiv
  M_{\mathcal{S}}(n)
  \coloneqq
  \bigl(S_{i+j-2}\bigr)_{i,j=1}^{n+1}
  =
  \kbordermatrix{
    & \mathit{1} & X & X^2 & \cdots & X^n \\
    \mathit{1} & S_0 & S_1 & S_2 & \cdots & S_n \\[0.2em]
    X           & S_1 & S_2 & \iddots & \iddots & S_{n+1} \\[0.2em]
    X^2         & S_2 & \iddots & \iddots & \iddots & \vdots \\[0.2em]
    \vdots      & \vdots & \iddots & \iddots & \iddots & S_{2n-1} \\[0.2em]
    X^n         & S_n & S_{n+1} & \cdots & S_{2n-1} & S_{2n}
  }.
\end{equation}
For each $i\in[0;n]$, we write
\begin{equation}\label{def:columns-mm}
  X_{\mathcal{S}}^i
  \coloneqq
  \col\bigl(S_{i+\ell}\bigr)_{\ell\in[0;n]}
\end{equation}
for the $(i+1)$-st block column of $M_{\mathcal{S}}(n)$. Whenever the
underlying sequence $\mathcal{S}$ is clear from the context, we write
$X^i$ instead of $X_{\mathcal{S}}^i$.

\subsection{Evaluation of a matrix polynomial on $M(n)$}
\label{subsec:evaluation}

Let $\mathcal{S}$ be a sequence as in~\eqref{def:sequence}, and let
$M_{\mathcal{S}}(n)$ be the corresponding truncated moment matrix.
Recall from~\eqref{def:columns-mm} that $X_{\mathcal{S}}^i$ denotes
the $(i+1)$-st block column of $M_{\mathcal{S}}(n)$.

Given a matrix polynomial
\begin{equation}\label{def:matrix-poly}
  P(x)
  =
  \sum_{i=0}^n x^iP_i,
  \qquad
  P_i\in M_p(\mathbb{R}),
\end{equation}
we define its \textbf{evaluation on $M_{\mathcal{S}}(n)$} by
\begin{equation}\label{def:evaluation}
  P(X_{\mathcal{S}})
  \coloneqq
  \sum_{i=0}^n X_{\mathcal{S}}^iP_i
  =
  X_{\mathcal{S}}^0P_0
  +X_{\mathcal{S}}^1P_1
  +\cdots
  +X_{\mathcal{S}}^nP_n
  \in M_{(n+1)p\times p}(\mathbb{R}).
\end{equation}
When the underlying sequence $\mathcal{S}$ is clear from the context,
we write $P(X)$ instead of $P(X_{\mathcal{S}})$.

If
\[
  P(X_{\mathcal{S}})
  =
  \mathbf{0}_{(n+1)p\times p},
\]
then $P(x)$ is called a \textbf{block column relation} of
$M_{\mathcal{S}}(n)$.

For a matrix polynomial $P(x)$ as in~\eqref{def:matrix-poly}, we denote
the block column vector of its matrix coefficients by
\begin{equation}\label{def:coefficients}
  \coef(P)
  \coloneqq
  \col(P_i)_{i\in[0;n]}.
\end{equation}

Finally, the moment matrix $M_{\mathcal{S}}(n)$ is called
\textbf{block--recursively generated} if multiplication by $x$
preserves every block column relation of degree at most $n-1$. More
precisely, if
\[
  P(x)
  =
  \sum_{i=0}^j x^iP_i
  \in M_p(\mathbb{R}[x]),
  \qquad
  j\in[0;n-1],
\]
satisfies
$
  P(X_{\mathcal{S}})
  =
  \mathbf{0}_{(n+1)p\times p},
$
then the matrix polynomial
\[
  (xP)(x)
  \coloneqq
  xP(x)
  =
  \sum_{i=0}^j x^{i+1}P_i
\]
also satisfies
$
  (xP)(X_{\mathcal{S}})
  =
  \mathbf{0}_{(n+1)p\times p}.
$

\subsection{Change of basis in the moment matrix}
\label{subsec:change-of-basis}

Let $\mathcal{S}\equiv\mathcal{S}^{(2n)}$ be as
in~\eqref{def:sequence}, and fix $t\in\mathbb{R}$. Consider the
invertible affine transformation
\[
  \phi_t(x)\coloneqq x-t.
\]
For each $i\in[0;2n]$, define the shifted moment
\begin{equation}\label{def:shifted-moments}
  T_i
  \coloneqq
  L_{\mathcal{S}}\bigl((x-t)^i\bigr)
  =
  \sum_{\ell=0}^i
  \binom{i}{\ell}(-1)^\ell t^\ell S_{i-\ell},
\end{equation}
and set
\begin{equation}\label{def:sequenceT}
  \mathcal{T}_t
  \equiv
  \mathcal{T}_t^{(2n)}
  \coloneqq
  (T_0,T_1,\ldots,T_{2n}).
\end{equation}
Let $M_{\mathcal{T}_t}(n)$ be the moment matrix associated with
$\mathcal{T}_t$, and let $X_{\mathcal{T}_t}^i$ denote its $(i+1)$-st
block column for $i\in[0;n]$.

Let
\[
  J_t\colon\mathbb{R}^{n+1}\to\mathbb{R}^{n+1}
\]
be the linear automorphism induced by the substitution
$x\mapsto x-t$ with respect to the monomial basis
$1,x,\ldots,x^n$. Thus,
\begin{equation}\label{def:J_t}
  J_t\coef(f)
  =
  \coef(f\circ\phi_t)
  =
  \coef\bigl(f(x-t)\bigr),
  \qquad
  f\in\mathbb{R}[x]_{\leq n}.
\end{equation}

\begin{proposition}
  [{\cite[Proposition~2.7]{ZZ+}}]
  \label{prop:change-of-basis}
  With the notation above, the following statements hold:
  \begin{enumerate}
    \item For every matrix polynomial
    $P\in M_p(\mathbb{R}[x])$ of degree at most $n$,
    \[
      (J_t\otimes I_p)\coef(P)
      =
      \coef(P\circ\phi_t)
      =
      \coef\bigl(P(x-t)\bigr).
    \]

    \item The moment matrices $M_{\mathcal{S}}(n)$ and
    $M_{\mathcal{T}_t}(n)$ are related by
    \[
      M_{\mathcal{T}_t}(n)
      =
      (J_t\otimes I_p)^{\mathsf T}
      M_{\mathcal{S}}(n)
      (J_t\otimes I_p).
    \]

    \item The linear maps $J_t$ and $J_t\otimes I_p$ are invertible.

    \item Positive semidefiniteness is preserved under the change of
    basis:
    \[
      M_{\mathcal{T}_t}(n)\succeq0
      \quad\Longleftrightarrow\quad
      M_{\mathcal{S}}(n)\succeq0.
    \]

    \item Rank is preserved under the change of basis:
    \begin{equation}\label{eq:rank-change-of-basis}
      \rank M_{\mathcal{T}_t}(n)
      =
      \rank M_{\mathcal{S}}(n).
    \end{equation}

    \item For every matrix polynomial
    $P\in M_p(\mathbb{R}[x])$ of degree at most $n$,
    \[
      P(X_{\mathcal{T}_t})
      =
      (J_t\otimes I_p)^{\mathsf T}
      (P\circ\phi_t)(X_{\mathcal{S}}).
    \]
  \end{enumerate}
\end{proposition}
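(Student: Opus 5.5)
The proposition is a statement about the block Hankel structure under the affine substitution $x\mapsto x-t$. I would reduce all six items to two elementary identities. The first describes the columns of $J_t$. The second expresses an evaluation $P(X_{\mathcal S})$ as a matrix product with $M_{\mathcal S}(n)$.

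\emph{The matrix $J_t$ and item (1).} By \eqref{def:J_t}, the $(i+1)$-st column of $J_t$ is $\coef((x-t)^i)$. Writing $(J_t)_{k i}$ for its entries (indices in $[0;n]$), this says
\[
(x-t)^i=\sum_{k=0}^n (J_t)_{k i}\,x^k .
\]
For $P(x)=\sum_i x^iP_i$ this gives
\[
P(x-t)=\sum_i (x-t)^iP_i=\sum_k x^k\Bigl(\sum_i (J_t)_{ki}P_i\Bigr).
\]
Since $\coef(P)$ stacks the blocks $P_i$ indexed by degree, the $k$-th block of $\coef(P\circ\phi_t)$ is $\sum_i (J_t)_{ki}P_i$. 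This is exactly the $k$-th block of $(J_t\otimes I_p)\coef(P)$, which proves (1). The only care needed is to keep the Kronecker ordering consistent: the degree index is the outer index and the $p$-dimensional index is the inner one. I expect this bookkeeping to be the only real obstacle.

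\emph{Items (2)--(5).} For (2), I compute the $(i,j)$ block of $M_{\mathcal T_t}(n)$ using linearity of $L_{\mathcal S}$:
\[
T_{i+j}=L_{\mathcal S}\bigl((x-t)^i(x-t)^j\bigr)=\sum_{k,l}(J_t)_{ki}(J_t)_{lj}\,S_{k+l}.
\]
This is precisely the $(i,j)$ block of $(J_t\otimes I_p)^{\mathsf T}M_{\mathcal S}(n)(J_t\otimes I_p)$, because the $(k,l)$ block of $M_{\mathcal S}(n)$ is $S_{k+l}$. For (3), $\phi_t\circ\phi_{-t}=\mathrm{id}$ gives $J_tJ_{-t}=J_{-t}J_t=I_{n+1}$. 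Alternatively, $J_t$ is upper unitriangular, since $(x-t)^i$ is monic of degree $i$. In either case $(J_t\otimes I_p)^{-1}=J_{-t}\otimes I_p$. Items (4) and (5) then follow at once, because (2) is a congruence by an invertible matrix. Congruence preserves positive semidefiniteness (Sylvester) and preserves rank.

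\emph{Item (6).} I use the observation that, by \eqref{def:evaluation} and since $X^i_{\mathcal S}$ is the $(i+1)$-st block column, $P(X_{\mathcal S})=M_{\mathcal S}(n)\coef(P)$, and similarly for $\mathcal T_t$. Combining this with (2) and (1) gives
\[
P(X_{\mathcal T_t})=M_{\mathcal T_t}(n)\coef(P)=(J_t\otimes I_p)^{\mathsf T}M_{\mathcal S}(n)\coef(P\circ\phi_t)=(J_t\otimes I_p)^{\mathsf T}(P\circ\phi_t)(X_{\mathcal S}).
\]
This is valid since $\deg(P\circ\phi_t)=\deg P\le n$.
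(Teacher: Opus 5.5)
Your proof is correct and complete: you identify the $(i+1)$-st column of $J_t$ with $\coef((x-t)^i)$ and expand $T_{i+j}=L_{\mathcal S}\bigl((x-t)^i(x-t)^j\bigr)$ to get the congruence in (2), you use unitriangularity (or $J_tJ_{-t}=I_{n+1}$) for (3)--(5), and you use the identity $P(X_{\mathcal S})=M_{\mathcal S}(n)\coef(P)$ for (6), which together give the direct verification this statement needs. The paper does not prove this proposition itself but cites it from \cite[Proposition~2.7]{ZZ+}, so there is no in-paper argument to compare yours against.
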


\begin{proposition}
  [{\cite[Corollary~2.8]{ZZ+}}]
  \label{cor:block-relations-change-of-basis}
  A matrix polynomial $P\in M_p(\mathbb{R}[x])$ of degree at most $n$
  is a block column relation of $M_{\mathcal{T}_t}(n)$ if and only if
  $P(x-t)$ is a block column relation of $M_{\mathcal{S}}(n)$.
\end{proposition}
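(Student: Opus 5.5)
The statement to prove is Corollary~\ref{cor:block-relations-change-of-basis}: $P$ is a block column relation of $M_{\mathcal{T}_t}(n)$ if and only if $P(x-t)$ is a block column relation of $M_{\mathcal{S}}(n)$. I would derive it directly from part~(6) of Proposition~\ref{prop:change-of-basis}, combined with the invertibility in part~(3).

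Let $P\in M_p(\mathbb{R}[x])$ have degree at most $n$. Part~(6) gives
\[
  P(X_{\mathcal{T}_t})
  =
  (J_t\otimes I_p)^{\mathsf T}\,(P\circ\phi_t)(X_{\mathcal{S}}),
\]
where $(P\circ\phi_t)(x)=P(x-t)$ is again a matrix polynomial of degree at most $n$. By part~(3), $J_t\otimes I_p$ is invertible, and therefore so is its transpose $(J_t\otimes I_p)^{\mathsf T}$.

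Multiplying on the left by an invertible matrix does not change whether a matrix is zero. Hence
\[
  P(X_{\mathcal{T}_t})=\mathbf{0}_{(n+1)p\times p}
  \iff
  (P\circ\phi_t)(X_{\mathcal{S}})=\mathbf{0}_{(n+1)p\times p}.
\]
By the definition of block column relation in Subsection~\ref{subsec:evaluation}, the left side says $P$ is a relation of $M_{\mathcal{T}_t}(n)$, and the right side says $P(x-t)$ is a relation of $M_{\mathcal{S}}(n)$.

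The only point needing care is that $\deg(P\circ\phi_t)\le n$, so that the evaluation on $M_{\mathcal{S}}(n)$ is defined. This holds because substituting $x\mapsto x-t$ does not raise the degree. I see no genuine obstacle beyond this.

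Were part~(6) not available, I would prove it by computing the block columns of $M_{\mathcal{T}_t}(n)$ entrywise. Using $T_{i+j}=L_{\mathcal{S}}\bigl((x-t)^{i+j}\bigr)$, the $i$-th block row of $P(X_{\mathcal{T}_t})$ equals $L_{\mathcal{S}}\bigl((x-t)^i\,P(x-t)\bigr)$, understood coefficientwise. Expanding $(x-t)^i$ in monomials expresses this as the $J_t^{\mathsf T}$-combination of the block rows of $(P\circ\phi_t)(X_{\mathcal{S}})$.
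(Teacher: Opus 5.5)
Your proof is correct and matches how the paper treats the statement: the paper gives no proof of its own, importing it from the cited reference as a corollary of Proposition~\ref{prop:change-of-basis}, and it follows from part~(6) together with the invertibility of $(J_t\otimes I_p)^{\mathsf T}$ from part~(3), as you argue. Your check that $\deg P(x-t)\le n$ is the only bookkeeping needed, and your fallback computation of the block rows as $L_{\mathcal{S}}\bigl((x-t)^\ell P(x-t)\bigr)$ is also sound.
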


\begin{proposition}
  [{\cite[Proposition~2.9]{ZZ+}}]
  \label{prop:block-recursive-generation-change-of-basis}
  The moment matrix $M_{\mathcal{S}}(n)$ is block--recursively generated
  if and only if $M_{\mathcal{T}_t}(n)$ is block--recursively generated.
\end{proposition}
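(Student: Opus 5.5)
Since the roles of $\mathcal{S}$ and $\mathcal{T}_t$ are symmetric, it suffices to prove one implication. The main tool is Proposition~\ref{cor:block-relations-change-of-basis}, together with the linearity of evaluation. Indeed, $\mathcal{S}$ is recovered from $\mathcal{T}_t$ by the shift with $-t$: the Riesz mapping of $\mathcal{T}_t$ is $f\mapsto L_{\mathcal{S}}(f(x-t))$, so shifting it by $-t$ gives back $L_{\mathcal{S}}$. Hence the converse follows from the forward direction applied to $\mathcal{T}_t$ and $-t$.

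So assume $M_{\mathcal{S}}(n)$ is block--recursively generated. Let $P(x)=\sum_{i=0}^j x^iP_i$ with $j\le n-1$ be a block column relation of $M_{\mathcal{T}_t}(n)$.
\begin{enumerate}
\item By Proposition~\ref{cor:block-relations-change-of-basis}, $Q(x):=P(x-t)$ is a block column relation of $M_{\mathcal{S}}(n)$. Expanding the binomials shows that $Q$ still has degree at most $j\le n-1$.
\item Block--recursive generation of $M_{\mathcal{S}}(n)$ then gives that $xQ(x)$ is a block column relation of $M_{\mathcal{S}}(n)$.
\item The evaluation map $P\mapsto P(X_{\mathcal{S}})$ in~\eqref{def:evaluation} is linear in the coefficients. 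Hence block column relations of degree at most $n$ form a real vector space, and so
\[
  xQ(x)-tQ(x)=(x-t)P(x-t)=(xP)(x-t)
\]
is also a block column relation of $M_{\mathcal{S}}(n)$, of degree at most $n$.
\item Applying Proposition~\ref{cor:block-relations-change-of-basis} once more, now to the matrix polynomial $xP$ of degree at most $n$, shows that $xP$ is a block column relation of $M_{\mathcal{T}_t}(n)$. This is exactly the required property of $M_{\mathcal{T}_t}(n)$.
\end{enumerate}

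The only points needing care are the degree bookkeeping and the symmetry claim. The shift $x\mapsto x-t$ preserves degrees, which guarantees that $Q$ has degree at most $n-1$ and that $xP$ has degree at most $n$. Both are needed for the recursive-generation hypothesis and for Proposition~\ref{cor:block-relations-change-of-basis} to apply. For the symmetry, one checks directly from~\eqref{def:shifted-moments} that $L_{\mathcal{T}_t}(f)=L_{\mathcal{S}}(f(x-t))$. The shifted sequence of $\mathcal{T}_t$ by $-t$ therefore has moments $L_{\mathcal{T}_t}((x+t)^i)=L_{\mathcal{S}}(x^i)=S_i$, which is $\mathcal{S}$. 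I expect no genuine obstacle here; the argument is a formal consequence of the invariance of block column relations under the change of basis.
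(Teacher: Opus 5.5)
Your proof is correct and takes the natural route. You transport a relation $P$ of $M_{\mathcal{T}_t}(n)$ to $Q(x)=P(x-t)$ via Proposition~\ref{cor:block-relations-change-of-basis}, use block--recursive generation and linearity of evaluation to get $(xP)(x-t)=xQ-tQ$ as a relation of $M_{\mathcal{S}}(n)$, and transport back; the converse follows since $\mathcal{S}$ is the shift of $\mathcal{T}_t$ by $-t$. The paper gives no proof of its own here (it cites [ZZ+, Proposition~2.9]), so there is nothing in the text to compare against beyond the change-of-basis results it relies on.
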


\subsection{Column relations of the moment matrix}
\label{subsec:column-dependencies}

Fix $t\in\mathbb{R}$. Using the notation introduced in
Subsection~\ref{subsec:change-of-basis}, write
\[
  M_{\mathcal{T}_t}(n)
  =\row\bigl(X_{\mathcal{T}_t}^i\bigr)_{i\in[0;n]}.
\]
For each $i\in[0;n]$, denote the columns of the block column
$X_{\mathcal{T}_t}^i$ by $(x_{\mathcal{T}_t})_j^{(i)}$, so that
\begin{equation}\label{columns-of-Yi}
  X_{\mathcal{T}_t}^i
  =
  \begin{pmatrix}
    (x_{\mathcal{T}_t})_1^{(i)}
    &
    (x_{\mathcal{T}_t})_2^{(i)}
    &
    \cdots
    &
    (x_{\mathcal{T}_t})_p^{(i)}
  \end{pmatrix}
  =
  \row\bigl((x_{\mathcal{T}_t})_j^{(i)}\bigr)_{j\in[p]}.
\end{equation}

For $i\in[0;n]$ and $j\in[p]$, define
\begin{align*}
  C_{\mathcal{T}_t}(i,j)
  &\coloneqq
  \row\!\left(
    X_{\mathcal{T}_t}^0,\ldots,X_{\mathcal{T}_t}^{i-1},
    (x_{\mathcal{T}_t})_1^{(i)},\ldots,
    (x_{\mathcal{T}_t})_j^{(i)}
  \right),\\
  C_{\mathcal{T}_t}(i,0)
  &\coloneqq
  \row\!\left(
    X_{\mathcal{T}_t}^0,\ldots,X_{\mathcal{T}_t}^{i-1}
  \right).
\end{align*}
Here, when $i=0$, the block columns preceding
$X_{\mathcal{T}_t}^0$ are absent. Thus,
$C_{\mathcal{T}_t}(0,0)$ is the empty matrix.

We define
\begin{equation}\label{def:Dep}
  \Dep\!\left(X_{\mathcal{T}_t}^i\right)
  \coloneqq
  \left\{
    j\in[p]\colon \rank C_{\mathcal{T}_t}(i,j)
    =
    \rank C_{\mathcal{T}_t}(i,j-1)
  \right\}.
\end{equation}
Equivalently, $j\in\Dep(X_{\mathcal{T}_t}^i)$ if and only if the column
$(x_{\mathcal{T}_t})_j^{(i)}$ belongs to the column space of
$C_{\mathcal{T}_t}(i,j-1)$.

The following result shows that these dependence indices are invariant
under translations of the polynomial variable.

\begin{proposition}\label{pr:lemmaDep}
  For every $t,u\in\mathbb{R}$ and every $i\in[0;n]$, one has
  \[
    \Dep\!\left(X_{\mathcal{T}_t}^i\right)
    =
    \Dep\!\left(X_{\mathcal{T}_u}^i\right).
  \]
\end{proposition}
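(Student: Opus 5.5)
The plan is to reduce everything to a single change of basis between $\mathcal{T}_t$ and $\mathcal{T}_u$, and to show that this change of basis preserves the rank of every leading column submatrix $C(i,j)$. Set $s\coloneqq u-t$. First I note that $\mathcal{T}_u$ is the shifted sequence of $\mathcal{T}_t$ at the point $s$, in the sense of \eqref{def:shifted-moments}. Indeed, $L_{\mathcal{T}_t}(g)=L_{\mathcal{S}}(g(x-t))$ for every $g\in\mathbb{R}[x]_{\leq 2n}$, by linearity and the binomial expansion. Hence
\[
L_{\mathcal{T}_t}\bigl((x-s)^i\bigr)=L_{\mathcal{S}}\bigl((x-t-s)^i\bigr)=L_{\mathcal{S}}\bigl((x-u)^i\bigr)=T_i^{(u)}.
\]
Applying Proposition~\ref{prop:change-of-basis}(2) with $\mathcal{S}$ replaced by $\mathcal{T}_t$ and $t$ replaced by $s$, I obtain
\[
M_{\mathcal{T}_u}(n)=(J_s\otimes I_p)^{\mathsf T}\,M_{\mathcal{T}_t}(n)\,(J_s\otimes I_p).
\]

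The key structural observation concerns $J_s$. The substitution $f\mapsto f(x-s)$ maps $\mathbb{R}[x]_{\leq k}$ into itself for every $k$, and it preserves the leading coefficient. So, with respect to the basis $1,x,\ldots,x^n$, the matrix $J_s$ is upper triangular with ones on the diagonal. Consequently $U\coloneqq J_s\otimes I_p$ is block upper triangular with diagonal blocks $I_p$.

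Now consider $N\coloneqq M_{\mathcal{T}_t}(n)U$. Its column in position $(i,j)$ (block $i$, index $j$) equals $(x_{\mathcal{T}_t})^{(i)}_j$ plus a linear combination of columns lying in the block columns $X^0_{\mathcal{T}_t},\ldots,X^{i-1}_{\mathcal{T}_t}$. Thus the first $ip+j$ columns of $N$ are obtained from $C_{\mathcal{T}_t}(i,j)$ by right multiplication with the leading $(ip+j)\times(ip+j)$ principal submatrix of $U$, which is unit upper triangular and hence invertible. Therefore these two matrices have the same column space, and in particular the same rank.

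Next, left multiplication by the invertible matrix $U^{\mathsf T}$ (Proposition~\ref{prop:change-of-basis}(3)) preserves the rank of every column submatrix. Since the first $ip+j$ columns of $U^{\mathsf T}N=M_{\mathcal{T}_u}(n)$ form exactly $C_{\mathcal{T}_u}(i,j)$, I get
\[
\rank C_{\mathcal{T}_u}(i,j)=\rank C_{\mathcal{T}_t}(i,j)
\]
for all $i\in[0;n]$ and $j\in[0;p]$, including the convention for $C(i,0)$ and the empty matrix $C(0,0)$. The definition \eqref{def:Dep} involves only these ranks, so the two sets $\Dep$ coincide.

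The only step requiring care is the triangularity of $J_s$ together with the bookkeeping showing that right multiplication by $U$ mixes column $(i,j)$ only with earlier blocks. This is where the leading-submatrix argument is used, and I expect it to be routine rather than a genuine obstacle.
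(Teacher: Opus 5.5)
Your proof is correct, and it takes a different route from the paper's.

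The paper handles one dependence at a time. It encodes $(x_{\mathcal{T}_t})^{(i)}_j\in\mathcal{C}(C_{\mathcal{T}_t}(i,j-1))$ as a matrix polynomial $P$ of degree $i$ whose coefficients have a single nonzero column. It then transports this to $Q(x)=P(x-t+u)$ via Proposition~\ref{cor:block-relations-change-of-basis}, and uses $Q_i=P_i$ to read off the same dependence in $M_{\mathcal{T}_u}(n)$. This gives one inclusion, and the other follows by symmetry.

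You instead apply the congruence of Proposition~\ref{prop:change-of-basis}(2) directly between $\mathcal{T}_t$ and $\mathcal{T}_u$, correctly identifying $\mathcal{T}_u$ as the shift of $\mathcal{T}_t$ by $u-t$. You then observe that $J_{u-t}\otimes I_p$ is unit upper triangular as a scalar matrix, so $\rank C_{\mathcal{T}_u}(i,j)=\rank C_{\mathcal{T}_t}(i,j)$ for every $(i,j)$.

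Both proofs rest on the same fact: translation preserves degree and leading coefficient. This appears as $Q_i=P_i$ in the paper and as unitriangularity in your argument. Your version is shorter and symmetric from the start. It obtains invariance of all the ranks $\rank C(i,j)$ at once, without auxiliary matrix polynomials. The paper's version instead produces the explicit transported relation (with coefficients $-q_k$), in the block-column-relation language it uses later.

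Neither argument extends to $\Dep_1$. Your $U$ adds multiples of $X^0_{\mathcal{T}_t}$ to the later block columns, just as $P(x-t+u)$ acquires a constant term. This is consistent with $A_{\mathcal{T}_t}$ depending on $t$.
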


\begin{proof}
  Fix $i\in[0;n]$. By symmetry, it suffices to prove that
  \[
    \Dep\!\left(X_{\mathcal{T}_t}^i\right)
    \subseteq
    \Dep\!\left(X_{\mathcal{T}_u}^i\right).
  \]
  Let $j\in\Dep(X_{\mathcal{T}_t}^i)$. By the definition of
  $\Dep(X_{\mathcal{T}_t}^i)$, the column
  $(x_{\mathcal{T}_t})_j^{(i)}$ belongs to the column space of
  $C_{\mathcal{T}_t}(i,j-1)$. Hence, there exist vectors
  \[
    s_0,\ldots,s_{i-1}\in\mathbb{R}^p
    \quad\text{and}\quad
    s_{i,(0)}\in\mathbb{R}^{j-1}
  \]
  such that
  \begin{equation}\label{eq:dep-linear-comb}
    (x_{\mathcal{T}_t})_j^{(i)}
    =
    \sum_{k=0}^{i-1}X_{\mathcal{T}_t}^k s_k
    +
    X_{\mathcal{T}_t}^i
    \col\!\left(s_{i,(0)},\mathbf{0}_{(p-j+1) \times 1}\right).
  \end{equation}
  Let $e_j\in\mathbb{R}^p$ be the $j$-th standard basis vector and set
    \begin{equation}\label{eq:def-v} 
         v \coloneqq e_j-\col\!\left(s_{i,(0)},\mathbf{0}_{(p-j+1) \times 1}\right). 
    \end{equation}
  Since $(x_{\mathcal{T}_t})_j^{(i)}
  =X_{\mathcal{T}_t}^i e_j$, equation
  \eqref{eq:dep-linear-comb} is equivalent to
  \begin{equation}\label{eq:block-relation-vector-form}
    X_{\mathcal{T}_t}^i v
    =
    \sum_{k=0}^{i-1}X_{\mathcal{T}_t}^k s_k.
  \end{equation}

  For $k\in[0;i]$, define
  \[
    p_k
    \coloneqq
    \begin{cases}
      -s_k, & k\in[0;i-1],\\
      v,    & k=i,
    \end{cases}
    \qquad
    P_k
    \coloneqq
    \begin{pmatrix}
      p_k & \mathbf{0}_{p\times(p-1)}
    \end{pmatrix}
    \in M_p(\mathbb{R}).
  \]
  Then \eqref{eq:block-relation-vector-form} is equivalent to
  \[
    \sum_{k=0}^i X_{\mathcal{T}_t}^kP_k
    =
    \mathbf{0}_{(n+1)p\times p}.
  \]
  Thus, the matrix polynomial
  \[
    P(x)\coloneqq\sum_{k=0}^i x^kP_k
  \]
  is a block column relation of $M_{\mathcal{T}_t}(n)$.

  The sequence $\mathcal{T}_t$ is obtained from $\mathcal{T}_u$ by the
  change of basis corresponding to
  \[
    x\longmapsto x-(t-u).
  \]
  Therefore, by Proposition~\ref{cor:block-relations-change-of-basis}, the matrix polynomial
  \[
    Q(x)
    \coloneqq
    P\bigl(x-(t-u)\bigr)
    =
    P(x-t+u)
  \]
  is a block column relation of $M_{\mathcal{T}_u}(n)$. Write
  \[
    Q(x)=\sum_{k=0}^i x^kQ_k.
  \]
  Since the last $p-1$ columns of every coefficient $P_k$ are zero,
  the same is true of every $Q_k$. Moreover, translation preserves the
  leading coefficient, so $Q_i=P_i$. Consequently, there exist vectors
  $q_0,\ldots,q_i\in\mathbb{R}^p$ such that
  \[
    Q_k=
    \begin{pmatrix}
      q_k & \mathbf{0}_{p\times(p-1)}
    \end{pmatrix},
    \qquad k\in[0;i],
  \]
  and $q_i=v$.

  Evaluating $Q$ on $M_{\mathcal{T}_u}(n)$ and taking the first column
  gives
  \[
    \sum_{k=0}^iX_{\mathcal{T}_u}^kq_k
    =
    \mathbf{0}_{(n+1)p}.
  \]
  Hence,
  \[
    X_{\mathcal{T}_u}^iv
    =
    \sum_{k=0}^{i-1}X_{\mathcal{T}_u}^k(-q_k).
  \]
Recalling the definition of $v$ in~\eqref{eq:def-v}, we obtain
  \[
    (x_{\mathcal{T}_u})_j^{(i)}
    =
    \sum_{k=0}^{i-1}X_{\mathcal{T}_u}^k(-q_k)
    +
    X_{\mathcal{T}_u}^i
    \col\!\left(s_{i,(0)},\mathbf{0}_{(p-j+1) \times 1}\right).
  \]
  Therefore, $(x_{\mathcal{T}_u})_j^{(i)}$ belongs to the column space
  of $C_{\mathcal{T}_u}(i,j-1)$, and hence
  $
    j\in\Dep\!\left(X_{\mathcal{T}_u}^i\right).
  $
  This proves the desired inclusion. The reverse inclusion follows by
  interchanging $t$ and $u$.
\end{proof}

Similarly, for $i\in[n]$ and $j\in[p]$, define
\begin{align*}
  C_{\mathcal{T}_t}^{(1)}(i,j)
  &\coloneqq
  \row\!\left(
    X_{\mathcal{T}_t}^1,\ldots,X_{\mathcal{T}_t}^{i-1},
    (x_{\mathcal{T}_t})_1^{(i)},\ldots,
    (x_{\mathcal{T}_t})_j^{(i)}
  \right),\\
  C_{\mathcal{T}_t}^{(1)}(i,0)
  &\coloneqq
  \row\!\left(
    X_{\mathcal{T}_t}^1,\ldots,X_{\mathcal{T}_t}^{i-1}
  \right).
\end{align*}
When $i=1$, the block columns preceding $X_{\mathcal{T}_t}^1$ are
absent, so $C_{\mathcal{T}_t}^{(1)}(1,0)$ is the empty matrix. 

\begin{remark}
The difference from $C_{\mathcal{T}_t}(i,j)$ is that
$C_{\mathcal{T}_t}^{(1)}(i,j)$ does not contain the zeroth block
column $X_{\mathcal{T}_t}^0$. Thus, $C_{\mathcal{T}_t}(i,j)$ records
column dependencies relative to all preceding block columns, whereas
$C_{\mathcal{T}_t}^{(1)}(i,j)$ records column dependencies relative
only to the preceding block columns of positive degree. In particular,
$
  \mathcal{C}\!\left(C_{\mathcal{T}_t}^{(1)}(i,j)\right)
  \subseteq
  \mathcal{C}\!\left(C_{\mathcal{T}_t}(i,j)\right).
$
\end{remark}

Set
$
  \Dep_1\!\left(X_{\mathcal{T}_t}^0\right)\coloneqq\emptyset,
$
and, for $i\in[n]$, define
\[
  \Dep_1\!\left(X_{\mathcal{T}_t}^i\right)
  \coloneqq
  \left\{
    j\in[p]\colon \rank C_{\mathcal{T}_t}^{(1)}(i,j)
    =
    \rank C_{\mathcal{T}_t}^{(1)}(i,j-1)
  \right\}.
\]
Equivalently, $j\in\Dep_1(X_{\mathcal{T}_t}^i)$ if and only if
$(x_{\mathcal{T}_t})_j^{(i)}$ belongs to the column space of
$C_{\mathcal{T}_t}^{(1)}(i,j-1)$.

For each $j\in[p]$, define the first-dependence indices
\begin{align*}
  \zeta_{\mathcal{T}_t}(j)
  &\coloneqq
  \min\left\{
    i\in[0;n]:
    j\in\Dep\!\left(X_{\mathcal{T}_t}^i\right)
  \right\},\\
  \zeta_{\mathcal{T}_t}^{(1)}(j)
  &\coloneqq
  \min\left\{
    i\in[n]:
    j\in\Dep_1\!\left(X_{\mathcal{T}_t}^i\right)
  \right\},
\end{align*}
where $\min\emptyset\coloneqq+\infty$. For each $i\in[0;n]$, set
\begin{align*}
  \Depn\!\left(X_{\mathcal{T}_t}^i\right)
  &\coloneqq
  \left\{
    j\in[p]\colon\zeta_{\mathcal{T}_t}(j)=i
  \right\},\\
  \Depn_1\!\left(X_{\mathcal{T}_t}^i\right)
  &\coloneqq
  \left\{
    j\in[p]\colon\zeta_{\mathcal{T}_t}(j)
    =
    \zeta_{\mathcal{T}_t}^{(1)}(j)
    =
    i
  \right\},\\
  \Depn_0\!\left(X_{\mathcal{T}_t}^i\right)
  &\coloneqq
  \Depn\!\left(X_{\mathcal{T}_t}^i\right)
  \setminus
  \Depn_1\!\left(X_{\mathcal{T}_t}^i\right).
\end{align*}
Thus, $\Depn(X_{\mathcal{T}_t}^i)$ consists of those indices that first
become dependent in the block column $X_{\mathcal{T}_t}^i$, while
$\Depn_1(X_{\mathcal{T}_t}^i)$ consists of those indices for which the
first dependence occurs at the same block level even after the block
column $X_{\mathcal{T}_t}^0$ is omitted.

Finally, define
\begin{equation}\label{def:def-of-A_t}
  A_{\mathcal{T}_t}
  \coloneqq
  \bigcup_{i=1}^n
  \Depn_1\!\left(X_{\mathcal{T}_t}^i\right).
\end{equation}
For $t=0$, we write $X^i\coloneqq X_{\mathcal{T}_0}^i$ and omit the
subscript $\mathcal{T}_0$ whenever no confusion can arise.

\medskip


Set
\[
  p_0\coloneqq p-\Card\Dep(X^n).
\]



Let
\begin{equation}\label{eq:defmathcalZ-v2-opt-in-definition}
  \mathcal{D}
  \equiv
  \{z_1,\ldots,z_s\}
  \coloneqq
  \left\{
    i\in[0;n]:
    \Depn\!\left(X_{\mathcal{T}_t}^i\right)\neq\emptyset
  \right\},
\end{equation}
where the elements are listed in increasing order:
\[
  0\leq z_1<\cdots<z_s\leq n.
\]
{If $\mathcal{D} \neq \emptyset$, it follows by Proposition \ref{pr:lemmaDep} and the definition of $z_s$ that}
$
  p_0
  =
  p-\Card\Dep\!\left(X_{\mathcal{T}_t}^{z_s}\right)
$; if $\mathcal{D} = \emptyset$, we have $p_0 = p$.
For $j\in[s]$, define
\begin{equation}\label{def-of-p0-and-pj}
  p_j
  \coloneqq
  \Card\Depn\!\left(
    X_{\mathcal{T}_t}^{z_{s-j+1}}
  \right),
  \qquad
  r_j
  \coloneqq
  \sum_{\ell=1}^j p_\ell.
\end{equation}
{If $\mathcal{D} = \emptyset$, we set $s := 0$ and $z_s \equiv z_0 := 0$ so that the formulas in the statements below remain well-defined.}

\begin{lemma}
  [{\cite[Section~3, Claim~2]{ZZ+}}]
  \label{prop:existence-of-relation-matrices}
  Let $\mathcal{S}\equiv\mathcal{S}^{(2n)}$ be as
  in~\eqref{def:sequence}, and assume that the corresponding moment
  matrix $M(n)$ is block--recursively generated. Fix $t\in\mathbb{R}$,
  {and let $\mathcal{D}$ be as
  in~\eqref{eq:defmathcalZ-v2-opt-in-definition},}
  with
  $p_j$ and $r_j$ as in~\eqref{def-of-p0-and-pj}.

  Then there exist matrices
  \[
    \widetilde{H}_0,\widetilde{H}_1,\ldots,\widetilde{H}_n
    \in M_{p\times(p-p_0)}(\mathbb{R})
  \]
  satisfying the following properties:
  \begin{enumerate}
    \item
    $
      \rank\widetilde{H}_n=p-p_0.
    $
    \smallskip
    \item
    \begin{equation}\label{eq:relation-matrices-column-relation}
      \sum_{i=0}^n
      X_{\mathcal{T}_t}^i\widetilde{H}_i
      =
      \mathbf{0}_{(n+1)p\times(p-p_0)}.
    \end{equation}

    \item If
    $
      \widetilde{s}_i
      \coloneqq
      \dim\left(
        \bigcap_{j=0}^i\Ker\widetilde{H}_j
      \right)$,
      $i\in[0;n-1],$
    then
    \begin{equation}\label{eq:kernel-dimension-lower-bound}
      \sum_{i=0}^{n-1}\widetilde{s}_i
      \geq
      (n-z_s)(p-p_0)
      +
      \sum_{j=1}^{s-1}
        (z_{j+1}-z_j)
        \bigl(p-p_0-r_{s-j}\bigr)
      +
      \Card A_{\mathcal{T}_t}.
    \end{equation}
  \end{enumerate}
\end{lemma}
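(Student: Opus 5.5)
We will build the relation matrices directly from the first column relations of $M_{\mathcal{T}_t}(n)$ and then propagate them with block--recursive generation. By Proposition~\ref{prop:block-recursive-generation-change-of-basis} we may work with $M_{\mathcal{T}_t}(n)$, which is again block--recursively generated.

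\textbf{Step 1: a first relation for each dependent index.} For every $j\in[p]$ with $\zeta(j)=\zeta_{\mathcal{T}_t}(j)<\infty$, the argument in the proof of Proposition~\ref{pr:lemmaDep} gives a matrix polynomial
\[
P^{(j)}(x)=\sum_{k=0}^{\zeta(j)}x^k P^{(j)}_k ,
\]
supported in one column, with $P^{(j)}(X_{\mathcal{T}_t})=\mathbf 0$. Its leading column has the form $e_j-\col(s,\mathbf 0)$, so it is "upper unitriangular" in the index $j$.

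Whenever $j\in A_{\mathcal{T}_t}$, i.e.\ $\zeta^{(1)}(j)=\zeta(j)$, we choose the relation so that it avoids $X^0_{\mathcal{T}_t}$. Then $P^{(j)}_0=0$, so $P^{(j)}=xR^{(j)}$ for some $R^{(j)}$ of degree $\zeta(j)-1$.

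\textbf{Step 2: lift to degree $n$.} By block--recursive generation, $x^{\,n-\zeta(j)}P^{(j)}$ is again a relation. Since the number of indices with $\zeta(j)<\infty$ is $\Card\Dep(X^n)=p-p_0$, we take these $p-p_0$ lifted relations as the columns of $\widetilde H_0,\dots,\widetilde H_n$. Equation~\eqref{eq:relation-matrices-column-relation} then holds by construction. The leading coefficients are $e_j-(\text{combination of } e_{j'},\ j'<j)$ for distinct $j$, hence linearly independent, which gives $\rank\widetilde H_n=p-p_0$.

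\textbf{Step 3: count kernel dimensions.} The column of $\widetilde H$ belonging to $j$ has coefficient zero in degrees $<n-\zeta(j)$, and additionally in degree $n-\zeta(j)$ when $j\in A_{\mathcal{T}_t}$. Hence $e_{\text{col}(j)}\in\bigcap_{k\le i}\Ker\widetilde H_k$ for $i<n-\zeta(j)$, and also for $i=n-\zeta(j)$ when $j\in A_{\mathcal{T}_t}$. Therefore
\[
\widetilde s_i\ \ge\ \#\{j:\ n-\zeta(j)>i\}+\#\{j\in A_{\mathcal{T}_t}:\ n-\zeta(j)=i\}.
\]
Summing over $i\in[0;n-1]$:
\begin{itemize}
  \item each dependent $j$ contributes $n-\zeta(j)$;
  \item each $j\in A_{\mathcal{T}_t}$ contributes one extra unit, and $i=n-\zeta(j)\le n-1$ holds because $\zeta(j)\ge1$ there.
\end{itemize}
This yields $\sum_j (n-\zeta(j))+\Card A_{\mathcal{T}_t}$. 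Grouping the indices by their level $z_\ell$ (there are $p_{s-\ell+1}$ of them at level $z_\ell$) and rewriting $\sum_\ell p_{s-\ell+1}(n-z_\ell)$ by Abel summation as
\[
(n-z_s)(p-p_0)+\sum_{j=1}^{s-1}(z_{j+1}-z_j)\bigl(p-p_0-r_{s-j}\bigr)
\]
gives \eqref{eq:kernel-dimension-lower-bound}, in fact with equality for this construction.

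\textbf{Main obstacle.} Two delicate points remain:
\begin{itemize}
  \item Verifying the Abel-summation bookkeeping: the indices first dependent at levels $\le z_j$ number $p-p_0-r_{s-j}$, and this must be matched against the definitions in \eqref{def-of-p0-and-pj}.
  \item Ensuring that for $j\in A_{\mathcal{T}_t}$ the $X^0$-free relation exists at exactly level $\zeta(j)$, with the same unitriangular leading column. This follows since $\zeta^{(1)}(j)=\zeta(j)$ means $(x_{\mathcal{T}_t})^{(\zeta(j))}_j\in\mathcal C(C^{(1)}(\zeta(j),j-1))$.
\end{itemize}
We expect the first point to be the fiddliest part of the argument.
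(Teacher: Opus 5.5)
Your proposal is correct. The paper gives no proof of this lemma; it only quotes it from \cite[Section~3, Claim~2]{ZZ+}. Your construction matches the counting that the shape of \eqref{eq:kernel-dimension-lower-bound} reflects: one single-column relation per dependent index, chosen free of $X^0_{\mathcal{T}_t}$ for indices in $A_{\mathcal{T}_t}$, lifted to degree $n$ by block--recursive generation, with the kernel dimensions read off column by column.

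The bookkeeping you flag as the main obstacle does close. Your lifted relations show $\{j:\zeta_{\mathcal{T}_t}(j)<\infty\}=\Dep\bigl(X_{\mathcal{T}_t}^n\bigr)$, because $\Dep$ is monotone in the block index. Hence $r_s=p-p_0$. Since $r_{s-j}$ counts the indices first dependent at levels $z_{j+1},\dots,z_s$, you get $p-p_0-r_{s-j}=\Card\{j':\zeta_{\mathcal{T}_t}(j')\le z_j\}$, and the Abel summation produces exactly the stated right-hand side.

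One small correction: your remark that equality holds applies to the count $\sum_j\bigl(n-\zeta_{\mathcal{T}_t}(j)\bigr)+\Card A_{\mathcal{T}_t}$. It does not necessarily apply to $\sum_i\widetilde s_i$, since the kernel intersections may be larger than the span of the coordinate vectors you exhibit. The lemma only needs the inequality, so this does not affect the proof.
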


{
\begin{proposition}
  \label{pr:t-optimal-block-column-relation-existence}
  Let $\mathcal{S}\equiv\mathcal{S}^{(2n)}$ be as
  in~\eqref{def:sequence}, and assume that the corresponding moment
  matrix $M(n)$ is block--recursively generated. Fix $t\in\mathbb{R}$,
  and let $\mathcal{D}$ be as
  in~\eqref{eq:defmathcalZ-v2-opt-in-definition}, with $p_j$ and $r_j$
  as in~\eqref{def-of-p0-and-pj}.

  Then there exists a block column relation $
    H(x)=\sum_{i=0}^n x^iH_i
  $
  of $M_{\cT_t}(n)$ satisfying the following properties:
  \begin{enumerate}
    \item
    $
    \rank H_n=p-p_0
    $.
    \smallskip
    \item
    \begin{equation}\label{de:optimal-block-column-relation-2}
    \sum_{i=0}^n\dim\left(\bigcap_{j=0}^i\Ker H_j\right)
    \geq
    (n+1)p_0
    +(n-z_s)(p-p_0)
    +\sum_{j=1}^{s-1}
      (z_{j+1}-z_j)\bigl(p-p_0-r_{s-j}\bigr)
    +\Card A_{\mathcal{T}_t},
  \end{equation}
  where $A_{\mathcal{T}_t}$ is defined
  in~\eqref{def:def-of-A_t}.
    \smallskip
    \item each coefficient $H_i$ is of the form
    \begin{equation}
    H_i
    =
    \begin{pmatrix}
      \mathbf{0}_{p\times p_0} & \widetilde{H}_i
    \end{pmatrix},
    \qquad i\in[0;n], \qquad {\text{where } \widetilde{H}_i
    \in M_{p\times(p-p_0)}(\mathbb{R})}.
    \end{equation}
  \end{enumerate}
\end{proposition}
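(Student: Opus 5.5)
The plan is to take the matrices $\widetilde H_0,\ldots,\widetilde H_n\in M_{p\times(p-p_0)}(\mathbb{R})$ supplied by Lemma~\ref{prop:existence-of-relation-matrices} for the fixed $t$ and to pad them with zero columns. Set
\[
  H_i\coloneqq\begin{pmatrix}\mathbf{0}_{p\times p_0} & \widetilde H_i\end{pmatrix}\in M_p(\mathbb{R}),\qquad i\in[0;n],
  \qquad H(x)\coloneqq\sum_{i=0}^n x^iH_i .
\]
Property (3) then holds by construction. Each $H_i$ is a square $p\times p$ matrix, so $H$ is a genuine matrix polynomial of degree at most $n$.

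Next we check that $H$ is a block column relation of $M_{\mathcal{T}_t}(n)$ and compute $\rank H_n$. Evaluating $H$ on $M_{\mathcal{T}_t}(n)$ gives
\[
  \sum_{i=0}^n X^i_{\mathcal{T}_t}H_i=\begin{pmatrix}\mathbf{0}_{(n+1)p\times p_0} & \sum_{i=0}^n X^i_{\mathcal{T}_t}\widetilde H_i\end{pmatrix}.
\]
By \eqref{eq:relation-matrices-column-relation} the second block vanishes, so $H(X_{\mathcal{T}_t})=\mathbf{0}_{(n+1)p\times p}$. Appending zero columns does not change rank, hence $\rank H_n=\rank\widetilde H_n=p-p_0$, which is property (1).

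For property (2) we compare kernels. Since the first $p_0$ columns of every $H_j$ vanish,
\[
  \bigcap_{j=0}^i\Ker H_j=\mathbb{R}^{p_0}\oplus\bigcap_{j=0}^i\Ker\widetilde H_j,
\]
with $\mathbb{R}^{p_0}$ embedded in the first $p_0$ coordinates. Therefore
\[
  \dim\Bigl(\bigcap_{j=0}^i\Ker H_j\Bigr)=p_0+\widetilde s_i \quad\text{for } i\in[0;n-1].
\]
For $i=n$ the dimension is at least $p_0$; in fact it equals $p_0$, because $\widetilde H_n$ has full column rank $p-p_0$. Summing over $i\in[0;n]$ gives a total of at least $(n+1)p_0+\sum_{i=0}^{n-1}\widetilde s_i$. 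Applying \eqref{eq:kernel-dimension-lower-bound} then yields exactly \eqref{de:optimal-block-column-relation-2}.

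No step here is a real obstacle, since all the substantive content sits in Lemma~\ref{prop:existence-of-relation-matrices}. The only points needing care are the bookkeeping conventions. When $p_0=p$, the matrices $\widetilde H_i$ are empty and $H\equiv 0$, and every inequality still holds. The kernel decomposition must also be written correctly, with the zero columns placed in the first $p_0$ coordinates.
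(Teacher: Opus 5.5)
Your proposal is correct and follows the paper's proof essentially verbatim. You pad the $\widetilde H_i$ from Lemma~\ref{prop:existence-of-relation-matrices} with $p_0$ zero columns, read off the relation and $\rank H_n$, and then use $\bigcap_{j\le i}\Ker H_j=\mathbb{R}^{p_0}\oplus\bigcap_{j\le i}\Ker\widetilde H_j$ together with $\Ker\widetilde H_n=\{\mathbf{0}\}$ to reduce \eqref{de:optimal-block-column-relation-2} to \eqref{eq:kernel-dimension-lower-bound}. The paper additionally notes, via Proposition~\ref{prop:block-recursive-generation-change-of-basis}, that $M_{\mathcal{T}_t}(n)$ is block--recursively generated; this is not needed, because the lemma's hypothesis is stated for $M(n)$ itself.
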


\begin{proof}
  By
  Proposition~\ref{prop:block-recursive-generation-change-of-basis},
  the moment matrix $M_{\mathcal{T}_t}(n)$ is block--recursively
  generated. Hence, Lemma~\ref{prop:existence-of-relation-matrices}
  yields matrices
  $
    \widetilde{H}_0,\widetilde{H}_1,\ldots,\widetilde{H}_n
    \in M_{p\times(p-p_0)}(\mathbb{R})
  $
  satisfying
  $
    \rank\widetilde{H}_n=p-p_0,
  $
  together with
  \eqref{eq:relation-matrices-column-relation} and
  \eqref{eq:kernel-dimension-lower-bound}.

  For each $i\in[0;n]$, define
  $
    H_i
    \coloneqq
    \begin{pmatrix}
      \mathbf{0}_{p\times p_0} & \widetilde{H}_i
    \end{pmatrix},
  $
  and set
  $
    H(x)\coloneqq\sum_{i=0}^n x^iH_i.
  $
  By \eqref{eq:relation-matrices-column-relation},
  $H(x)$ is a block column relation of
  $M_{\mathcal{T}_t}(n)$. Moreover,
  \[
    \rank H_n
    =
    \rank\widetilde{H}_n
    =
    p-p_0.
  \]

  For every $i\in[0;n]$, the block form of the matrices $H_j$ gives
  \[
    \bigcap_{j=0}^i\Ker H_j
    =
    \mathbb{R}^{p_0}
    \oplus
    \bigcap_{j=0}^i\Ker\widetilde{H}_j.
  \]
  Consequently,
  \[
    \dim\left(\bigcap_{j=0}^i\Ker H_j\right)
    =
    p_0+
    \dim\left(\bigcap_{j=0}^i\Ker\widetilde{H}_j\right).
  \]
  Since $\widetilde{H}_n$ has full column rank, one has
  $
    \Ker\widetilde{H}_n=\{\mathbf{0}\},
  $
  and hence
  $
    \bigcap_{j=0}^n\Ker\widetilde{H}_j
    =
    \{\mathbf{0}\}.
  $
  It follows that
  \begin{equation}\label{PhiH-equality}
    \sum_{i=0}^n\dim\left(\bigcap_{j=0}^i\Ker H_j\right)
    =
    (n+1)p_0+\sum_{i=0}^{n-1}\dim\left(
        \bigcap_{j=0}^i\Ker\widetilde{H}_j
      \right).
  \end{equation}
  Combining \eqref{PhiH-equality} with \eqref{eq:kernel-dimension-lower-bound} yields \eqref{de:optimal-block-column-relation-2}.
%
\end{proof}
}

\subsection{Support of a representing measure and block column relations} \label{subsec:support-block-column-relations}

The following lemma relates the support and atomic multiplicities of a representing measure to the zeros of the determinant of a block column relation.

\begin{lemma}
  [{\cite[Lemma~2.4]{ZZ+}}]
  \label{lem:support-from-block-column-relation}
  Let $n,p\in\mathbb{N}$, and let
  \[
    \mathcal{S}
    \coloneqq
    (S_0,S_1,\ldots,S_{2n})
    \in\bigl(\Sym_p(\mathbb{R})\bigr)^{2n+1}
  \]
  be a sequence with moment matrix $M(n)$ and an
  $\mathbb{R}$--representing measure $\mu$. Suppose that
  \[
    H(x)=\sum_{i=0}^n x^iH_i\in M_p(\mathbb{R}[x])
  \]
  is a block column relation of $M(n)$ and that $H_n$ is invertible.
  Then:
  \begin{enumerate}
    \item 
        $\supp(\mu)\subseteq\mathcal{Z}(\det H)$.
        \smallskip
    \item
        $\mult_\mu\xi
        \leq
        \mult_{\det H}\xi$
        \quad
        \text{for every }$\xi\in\mathbb{R}$.
  \end{enumerate}
\end{lemma}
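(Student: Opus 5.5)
The plan is to pass through the finitely atomic representing measure $\mu$. Writing $\mu=\sum_{k=1}^{\ell}\delta_{x_k}A_k$ with pairwise distinct atoms $x_k$ and nonzero psd masses $A_k$, the moment matrix factors as
\[
  M(n)=\sum_{k=1}^{\ell}V_{x_k}A_kV_{x_k}^{\mathsf T},
  \qquad
  V_x\coloneqq \col(x^iI_p)_{i\in[0;n]}\in M_{(n+1)p\times p}(\mathbb{R}).
\]
For a matrix polynomial $P(x)=\sum_i x^iP_i$ this gives $P(X)=\sum_k V_{x_k}A_kP(x_k)$. Hence
\[
  \coef(P)^{\mathsf T}M(n)\coef(P)=\sum_k P(x_k)^{\mathsf T}A_kP(x_k),
\]
and $P$ is a block column relation if and only if $A_kP(x_k)=0$ for every $k$. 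Here I use $M(n)\succeq0$, which holds because $\mu$ is a positive measure.

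For claim (1), fix $\xi$ with $\mu(\{\xi\})=A\neq 0$ and apply the above with $P=H$. We get $AH(\xi)=0$, so $H(\xi)^{\mathsf T}$ annihilates $\mathcal{C}(A)$. Since $A\ne0$, the matrix $H(\xi)$ is singular, so $\det H(\xi)=0$ and $\xi\in\mathcal{Z}(\det H)$. For a general (not necessarily atomic) $\mu$, the same computation works with integrals. From $\int H(x)^{\mathsf T}\,d\mu\,H(x)=\coef(H)^{\mathsf T}M(n)\coef(H)=0$ and the fact that $\mu$ has a density $\Theta\succeq0$ with respect to its trace measure $\tau$, we get $\Theta(x)^{1/2}H(x)=0$ for $\tau$-a.e.\ $x$. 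So $\Theta(x)$ has nonzero range, and hence $\det H(x)=0$, for $\tau$-a.e.\ $x$. Because $H_n$ is invertible, $\det H$ is a nonzero polynomial with finitely many zeros, so $\tau$ is concentrated on the finite set $\mathcal{Z}(\det H)$. In particular $\mu$ is finitely atomic and claim (1) holds.

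For claim (2), at an atom $\xi$ with mass $A$ we have $\mathcal{C}(A)\subseteq\Ker H(\xi)^{\mathsf T}$. Therefore $\mult_\mu\xi=\rank A\le\dim\Ker H(\xi)^{\mathsf T}=\dim\Ker H(\xi)$. It then suffices to prove the standard fact $\dim\Ker H(\xi)\le\mult_{\det H}\xi$. I plan to do this via the Smith normal form over $\mathbb{R}[x]$: $H=U D V$ with $U,V$ unimodular and $D=\operatorname{diag}(d_1,\dots,d_p)$. Then $\det H=c\prod_i d_i$, and $\dim\Ker H(\xi)=\dim\Ker D(\xi)$ equals the number of $i$ with $d_i(\xi)=0$, each of which contributes at least one to $\mult_{\det H}\xi$. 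Alternatively, choose a constant invertible $W$ whose first $m=\dim\Ker H(\xi)$ columns span $\Ker H(\xi)$. Then the first $m$ columns of $H(x)W$ are divisible by $(x-\xi)$, so $(x-\xi)^m$ divides $\det(HW)$.

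The main obstacle is only the bookkeeping in the non-atomic case of (1), namely justifying the Radon--Nikodym density argument. Everything else is linear algebra.
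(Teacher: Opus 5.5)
The paper does not prove this lemma; it imports it verbatim from \cite[Lemma~2.4]{ZZ+}, so there is no in-paper argument to compare your route against. Judged on its own, your proof is correct and self-contained.

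The identity $\coef(H)^{\mathsf T}M(n)\coef(H)=\int H^{\mathsf T}\,d\mu\,H=\mathbf{0}_p$, together with positivity, forces $\mu(\{\xi\})H(\xi)=\mathbf{0}_p$ at every atom. In general it gives $\Theta^{1/2}H=0$ $\tau$-a.e. Part (2) then reduces to the algebraic inequality $\dim\Ker H(\xi)\le\mult_{\det H}\xi$. Your second argument for it is the most economical: the first $m$ columns of $H(x)W$ are divisible by $x-\xi$, so $(x-\xi)^m$ divides $\det H$. Smith normal form also works but is more machinery than needed.

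One sentence needs repair. In the non-atomic case, ``so $\Theta(x)$ has nonzero range'' does not follow from $\Theta(x)^{1/2}H(x)=0$. What you need is the normalization $\tr\Theta=1$ $\tau$-a.e., which holds because $\tau=\tr\mu$. Then $H(x)^{\mathsf T}$ annihilates the nonzero space $\mathcal{C}(\Theta(x))$, so $\det H(x)=0$ $\tau$-a.e.

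With this fix, your argument proves slightly more than stated. If such an $H$ exists, every $\mathbb{R}$--representing measure of $\mathcal{S}$ is automatically finitely atomic. Its total multiplicity is at most $\deg\det H=np$, since the leading coefficient of $\det H$ is $\det H_n\neq0$. This counting form is exactly how the lemma is used in the proof of Theorem~\ref{th:theorem-subsequent-moments}.
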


\subsection{Determinant of a matrix polynomial} \label{subsec:determinant-matrix-polynomial}

The following result relates the vanishing order of the determinant of a matrix polynomial at a prescribed point to the common kernels of its coefficients.

\begin{lemma}
  [{\cite[Lemma~2.10]{ZZ+}}]
  \label{lem:determinant-matrix-polynomial}
  Let $n,p\in\mathbb{N}$, let $t\in\mathbb{R}$, and let
  \[
    H(x)
    =
    \sum_{i=0}^n (x-t)^iH_i
    \in M_p(\mathbb{R}[x])
  \]
  be a nonzero matrix polynomial. 
  {Let $H_n$ be invertible.
  For $k\in[0;n-1]$} define
  \[
    W_k
    \coloneqq
    \bigcap_{i=0}^k\Ker H_i,
    \qquad
    s_k\coloneqq\dim W_k.
  \]
  Then
  \begin{equation}\label{eq:determinant-matrix-polynomial}
    \det H(x)
    =
      (x-t)^{\sum_{k=0}^{n-1}s_k}g(x),
  \end{equation}
  where $0\neq g(x)\in\mathbb{R}[x]$.
\end{lemma}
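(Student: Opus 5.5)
We prove the determinant formula by expanding $H$ in powers of $(x-t)$ and choosing a basis of $\mathbb{R}^p$ adapted to the decreasing chain of common kernels. After translating $x\mapsto x+t$ we may assume $t=0$, so $H(x)=\sum_{i=0}^n x^iH_i$. Set $W_{-1}\coloneqq\mathbb{R}^p$ and $W_n\coloneqq\{0\}$; the latter holds because $H_n$ is invertible. The chain
\[
\mathbb{R}^p=W_{-1}\supseteq W_0\supseteq W_1\supseteq\cdots\supseteq W_{n-1}\supseteq W_n=\{0\}
\]
is then decreasing. Choose a basis $b_1,\ldots,b_p$ of $\mathbb{R}^p$ adapted to this filtration: each $W_k$ is spanned by a terminal segment of the basis. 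Let $B$ be the invertible matrix with these columns. Then $\det H(x)=\det(H(x)B)/\det B$, so it suffices to analyze $H(x)B$ column by column.

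Fix a basis vector $b$ and let $m(b)\coloneqq\max\{k\ge -1:\ b\in W_k\}$. Then $H_ib=0$ for all $i\le m(b)$, so $H(x)b=x^{m(b)+1}\,c_b(x)$ for some vector polynomial $c_b(x)$. Factoring these powers out of the columns gives
\[
\det H(x)=x^{N}\,\frac{\det C(x)}{\det B},\qquad N\coloneqq\sum_{j=1}^p\bigl(m(b_j)+1\bigr),
\]
where $C(x)$ is the matrix with columns $c_{b_j}(x)$. Counting by layers, $b$ contributes $m(b)+1$, which equals the number of indices $k\in[0;n-1]$ with $b\in W_k$. Since the basis is adapted, the number of basis vectors lying in $W_k$ is $s_k$. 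Summing over the basis therefore gives
\[
N=\sum_{k=0}^{n-1}s_k,
\]
which is exactly the exponent in \eqref{eq:determinant-matrix-polynomial}.

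It remains to show $g(x)\coloneqq\det C(x)/\det B$ is a nonzero polynomial. It is a polynomial because each $c_b$ is. The only real obstacle is $g\neq0$, and we would compare leading coefficients.

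The degree of $\det H$ is $np$ with leading coefficient $\det H_n\neq0$, since $H_n$ is invertible. Hence $\det H\not\equiv0$. As $\det H=x^Ng$, it follows that $g\neq0$. This completes the argument; the adapted-basis layer counting is the only delicate bookkeeping.
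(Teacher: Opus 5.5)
Your proof is correct and complete. The adapted basis turns each column $H(x)b_j$ into $x^{m(b_j)+1}c_{b_j}(x)$. The double count
\[
\sum_j \bigl(m(b_j)+1\bigr)=\sum_{k=0}^{n-1}\#\{j : b_j\in W_k\}=\sum_{k=0}^{n-1}s_k
\]
is valid. Adaptedness is exactly what guarantees $\#\{j: b_j\in W_k\}=s_k$: the basis vectors outside the terminal segment spanning $W_k$ cannot lie in $W_k$, by linear independence. Finally, $g\neq 0$ follows because $\det H$ has degree $np$ with leading coefficient $\det H_n\neq 0$.

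The paper does not prove this lemma itself; it imports it from [ZZ+, Lemma~2.10]. So there is no in-paper argument to compare with, and your column-factoring argument serves as a self-contained proof.

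You are also right not to claim $g(t)\neq 0$. The exponent $\sum_k s_k$ is in general only a lower bound for the order of vanishing of $\det H$ at $t$; for example, $H(x)=\begin{pmatrix} x & 1\\ 0 & x\end{pmatrix}$ gives $s_0=1$ but $\det H=x^2$. The lower bound is all the paper needs in the proof of Theorem~\ref{th:theorem-subsequent-moments}. There, only divisibility by the stated power of $(x-t)$ and the total degree $(n+1)p$ of $\det Q(x-t)$ are used, and your leading-coefficient observation supplies the latter.
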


\subsection{Lemma on the rank of a matrix}

The following technical lemma will be used in the proof of Theorem \ref{th:mainTheorem_RR}.

\begin{lemma}
[{\cite[Lemma~2.14]{ZZ+}}]
\label{le:rank-completion}
Let $m_1,m_2,c,k,\ell\in\mathbb N$, and let
\[
A_1\in M_{m_1\times k}(\mathbb R),\qquad
A_2\in M_{m_2\times k}(\mathbb R),\qquad
C\in M_{c\times k}(\mathbb R),
\]
and
\[
B_1\in M_{m_1\times\ell}(\mathbb R),\qquad
B_2\in M_{m_2\times\ell}(\mathbb R),\qquad
D\in M_{c\times\ell}(\mathbb R).
\]
Assume that
\begin{enumerate}
\item
\[
\rank A_2
=
\rank
\begin{pmatrix}
A_1\\
A_2
\end{pmatrix}
=
\rank
\begin{pmatrix}
A_1 & B_1\\
A_2 & B_2
\end{pmatrix};
\]
\item
\[
\rank
\begin{pmatrix}
A_2\\
C
\end{pmatrix}
=
\rank
\begin{pmatrix}
A_2 & B_2\\
C & D
\end{pmatrix}.
\]
\end{enumerate}
Then
\[
\rank
\begin{pmatrix}
A_1\\
A_2\\
C
\end{pmatrix}
=
\rank
\begin{pmatrix}
A_1 & B_1\\
A_2 & B_2\\
C & D
\end{pmatrix}.
\]
\end{lemma}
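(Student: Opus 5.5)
We prove the equality by two rank inequalities. The inequality $\leq$ in the conclusion is automatic: adding the columns $\col(B_1,B_2,D)$ can only increase the rank. So the whole task is to show that the columns of
$\begin{pmatrix} B_1\\ B_2\\ D\end{pmatrix}$
lie in the column space of
$\begin{pmatrix} A_1\\ A_2\\ C\end{pmatrix}$.
Put differently, we must find a matrix $W\in M_{k\times\ell}(\mathbb R)$ with $A_1W=B_1$, $A_2W=B_2$ and $CW=D$ simultaneously.

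First, use hypothesis (1). From
$\rank\begin{pmatrix}A_1\\A_2\end{pmatrix}=\rank\begin{pmatrix}A_1&B_1\\A_2&B_2\end{pmatrix}$
we get a matrix $W_1$ with $A_1W_1=B_1$ and $A_2W_1=B_2$. Next, the equality $\rank A_2=\rank\begin{pmatrix}A_1\\A_2\end{pmatrix}$ says that the row space of $A_1$ is contained in the row space of $A_2$. Hence there is a matrix $T$ with $A_1=TA_2$, or equivalently $\Ker A_2\subseteq\Ker A_1$.

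Second, use hypothesis (2). It gives a matrix $W_2$ with $A_2W_2=B_2$ and $CW_2=D$. The candidate is $W\coloneqq W_2$; it remains to check the first block row:
\[
A_1W_2=TA_2W_2=TB_2=TA_2W_1=A_1W_1=B_1 .
\]
Equivalently, $A_2(W_2-W_1)=0$ gives $W_2-W_1\in\Ker A_2\subseteq\Ker A_1$ columnwise, so $A_1W_2=A_1W_1=B_1$. Therefore
$\col(B_1,B_2,D)=\col(A_1,A_2,C)\,W_2$, which yields the inequality $\geq$ and hence the claimed rank equality.

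The only point needing care is to translate the first equality in (1) into the kernel inclusion $\Ker A_2\subseteq\Ker A_1$. This holds because equal ranks together with $\Ker\col(A_1,A_2)\subseteq\Ker A_2$ force $\Ker\col(A_1,A_2)=\Ker A_2$. Everything else is direct substitution.
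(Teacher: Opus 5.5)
Your proof is correct and complete. Hypothesis (1) gives you $W_1$ with $\col(A_1,A_2)W_1=\col(B_1,B_2)$ and the inclusion $\Ker A_2\subseteq\Ker A_1$. Hypothesis (2) gives you $W_2$ with $\col(A_2,C)W_2=\col(B_2,D)$. Then $A_2(W_2-W_1)=0$ forces $A_1W_2=A_1W_1=B_1$, so $\col(B_1,B_2,D)=\col(A_1,A_2,C)\,W_2$.

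The paper does not prove this lemma; it imports it from \cite[Lemma~2.14]{ZZ+}, so there is no in-paper argument to compare with. Your column-space argument is a valid self-contained justification of the statement as it is used in the proof of Theorem~\ref{th:mainTheorem_RR}.
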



	\subsection{Solutions to the truncated matricial Hamburger and Stieltjes moment problem}

    We conclude this section by recalling the characterizations of the solvability of the truncated matricial Hamburger and Stieltjes moment problems due to Bakonyi and Woerdeman and to Kimsey, respectively. These results also guarantee the existence of finitely atomic representing matrix measures of minimal size.
    
	\begin{theorem}
		[{\cite[Theorem 2.7.6]{BW11}}]
		\label{th:Hamburger-matricial}
		Let $n, p \in \NN$ and let $\mathcal S:=(S_0,S_1,\ldots,S_{2n})\in (\Sym_p(\RR))^{2n+1}$
		be a sequence with a moment matrix $M(n)$.
		Then the following statements are equivalent:
		\begin{enumerate}
			\item 
			There exists a  
			$\RR$--representing matrix measure for $\mathcal S$.
			\item 
			There exists a $(\Rank M(n))$--atomic 
			$\RR$--representing matrix measure for $\mathcal S$.
			\item 
			$M(n)$
			is positive semidefinite
			and 
			$
			\cC\big(
			\col(S_{n+i})_{i\in [n]}
			\big)
			\subseteq
			\cC(M(n-1)).
			$
		\end{enumerate}
	\end{theorem}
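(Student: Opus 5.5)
This theorem is cited from Bakonyi and Woerdeman, so no new argument is needed. Still, I would prove it along the classical flat-extension route.

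The implication (2)$\Rightarrow$(1) is trivial.

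For (1)$\Rightarrow$(3), let $\mu$ be a representing measure. Then for every $v=\col(v_i)\in\mathbb R^{(n+1)p}$,
\[
v^{\mathsf T}M(n)v=\int \Bigl(\sum_i x^iv_i\Bigr)^{\mathsf T}d\mu\Bigl(\sum_i x^iv_i\Bigr)\ge 0,
\]
so $M(n)\succeq0$. For the range inclusion, take $v=\col(v_0,\ldots,v_{n-1})\in\Ker M(n-1)$. Then $\int V^{\mathsf T}d\mu\,V=0$ with $V(x)=\sum_{i<n}x^iv_i$, so $V\equiv 0$ $\mu$-a.e. in the matricial sense, i.e.\ $\mu(\cdot)V(\cdot)=0$. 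Hence $\int x^{n+j}\,d\mu\,V=0$ for $j\in[n]$, which gives $\col(S_{n+i})^{\mathsf T}v=0$ (the moments $S_{2n}$ being finite). Taking orthogonal complements in the symmetric matrix $M(n-1)$ yields $\cC(\col(S_{n+i})_{i\in[n]})\subseteq\cC(M(n-1))$. Here one must be careful that $S_{2n}$ is only $\int x^{2n}d\mu$ for a measure with finite $2n$-th moment; this holds by the definition of a representing measure.

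For (3)$\Rightarrow$(2), the plan is the following.

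\textbf{Step 1: a Hankel positive extension.} Since $M(n)\succeq 0$ and the range condition holds, write $\col(S_{n+1},\ldots,S_{2n})=M(n-1)W$ for some block matrix $W$. From $M(n)\succeq0$ one gets $S_{2n}\succeq W^{\mathsf T}M(n-1)W$. Replace $S_{2n}$ by $\widetilde S_{2n}\coloneqq W^{\mathsf T}M(n-1)W$. Then $\widetilde M(n)\succeq0$, and $\rank\widetilde M(n)=\rank M(n-1)$, i.e.\ the new matrix is a flat extension of $M(n-1)$.

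\textbf{Step 2: flat extension to all orders.} Define $S_{2n+1},S_{2n+2},\ldots$ by the block column relations $X^n=\sum_{i<n}X^iW_i$ propagated recursively (the Hankel structure is checked exactly as in the scalar Curto--Fialkow argument). This gives a rank-preserving psd extension $\widetilde M(n+1)$.

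\textbf{Step 3: reading off the atoms.} A flat positive Hankel extension yields a finitely atomic measure. The atoms are the zeros of $\det$ of a monic matrix polynomial $H(x)=x^nI-\sum x^iW_i$ (after reduction to full-rank blocks). Equivalently, they are the eigenvalues of the multiplication operator by $x$ on the $\rank M(n-1)$-dimensional space $\mathbb R^{(n)p}/\Ker M(n-1)$. This operator is symmetric with respect to the inner product induced by $M(n-1)$, so its eigenvalues are real and its spectral decomposition gives masses $A_j\succeq0$ with $\sum\rank A_j=\rank M(n-1)$. Lemma~\ref{lem:support-from-block-column-relation} controls the support.

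\textbf{Step 4: the main obstacle.} This measure represents $S_0,\ldots,S_{2n-1}$ and $\widetilde S_{2n}\preceq S_{2n}$. The remaining gap $S_{2n}-\widetilde S_{2n}\succeq0$ is a ``mass at infinity'' that must be absorbed without exceeding $\rank M(n)$ atoms. My first attempt would be to instead choose the extension $S_{2n+1}$ freely (any symmetric matrix) so that the extended truncated matrix, together with the original $S_{2n}$, admits a flat extension $M(n+1)$ of $M(n)$. This is done by solving for $S_{2n+1}$ and $S_{2n+2}$ via $\col(S_{n+1},\ldots,S_{2n+1})\in\cC(M(n))$. The range condition from (3), combined with $M(n)\succeq0$, shows that $\col(S_{n+1},\ldots,S_{2n})$ lies in the range of the top-left block, which makes such a choice possible. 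The symmetry of $S_{2n+1}$ is the delicate point, and that is where I expect the real work to lie. Once the flat extension exists, Step 3 applied at level $n+1$ gives the required $(\rank M(n))$-atomic measure.
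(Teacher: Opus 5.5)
The implications (2)$\Rightarrow$(1) and (1)$\Rightarrow$(3) are correct. The paper gives no proof of this statement and only quotes Bakonyi--Woerdeman, so there is no route to compare against. However, (3)$\Rightarrow$(2), which is the actual content of the theorem, is not proved in your proposal.

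\textbf{Steps 1--3 are a dead end.} You define $W$ by $\col(S_{n+1},\ldots,S_{2n})=M(n-1)W$. The inequality $S_{2n}\succeq W^{\mathsf T}M(n-1)W$ does not follow from $M(n)\succeq0$, because the Schur complement of $M(n)$ involves $\col(S_n,\ldots,S_{2n-1})$, not this column. For example, take $p=1$, $n=2$, $(S_0,\ldots,S_4)=(1,0,1,0,5)$. Then $M(2)\succ0$, $M(1)=I_2$, $W=(0,5)^{\mathsf T}$, and $W^{\mathsf T}M(1)W=25>5=S_4$. Even with the correct $W$, these steps represent only a modified sequence, as you note yourself.

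\textbf{The gap is in Step 4.} Everything rests on the existence of a \emph{symmetric} $S_{2n+1}$ with $\cC(\col(S_{n+1},\ldots,S_{2n+1}))\subseteq\cC(M(n))$, and you leave exactly this open. It is not a formality. The obvious choice is to read off the relation $\col(S_{n+1},\ldots,S_{2n})=M(n-1)W$ on the first $n$ block columns, i.e.\ $S_{2n+1}\coloneqq\row(S_n,\ldots,S_{2n-1})W$. This can be nonsymmetric. For $n=1$, $S_0=I_2$, $S_1=\mathrm{diag}(1,0)$, $S_2=\begin{pmatrix}2&1\\1&1\end{pmatrix}$, condition (3) holds, yet this choice gives $S_1S_2=\begin{pmatrix}2&1\\0&0\end{pmatrix}$.

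\textbf{How to close the gap.} Put $Y\coloneqq\col(S_{n+1},\ldots,S_{2n})$. For $\kappa=\col(\kappa_0,\ldots,\kappa_n)\in\Ker M(n)$, write $\kappa'\coloneqq\col(\kappa_0,\ldots,\kappa_{n-1})$.
\begin{enumerate}
\item Since $M(n)$ is symmetric, the columns of $\col(Y,Z)$ lie in $\cC(M(n))$ if and only if $Z^{\mathsf T}\kappa_n=-Y^{\mathsf T}\kappa'$ for every such $\kappa$.
\item This prescribes $Z$ on the subspace $N$ of all occurring last components $\kappa_n$. The prescription is well defined precisely because of the range hypothesis in (3): if $\kappa_n=0$, then $M(n-1)\kappa'=0$, hence $Y^{\mathsf T}\kappa'=W^{\mathsf T}M(n-1)\kappa'=0$.
\item A symmetric $Z$ with prescribed action on $N$ exists if and only if $\lambda_n^{\mathsf T}Y^{\mathsf T}\kappa'=\kappa_n^{\mathsf T}Y^{\mathsf T}\lambda'$ for all $\kappa,\lambda\in\Ker M(n)$.
\item This identity comes from the Hankel structure. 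The sum $\sum\lambda_j^{\mathsf T}S_{j+k+1}\kappa_k$ vanishes over $j\in[0;n-1]$, $k\in[0;n]$, since each inner sum in $k$ is a block row of $M(n)\kappa$. It also vanishes over $j\in[0;n]$, $k\in[0;n-1]$, since each inner sum in $j$ is the transpose of a block row of $M(n)\lambda$. Only $S_0,\ldots,S_{2n}$ occur in either sum, and subtracting the two gives the required identity.
\item Set $S_{2n+1}\coloneqq Z$ and $S_{2n+2}\coloneqq\col(Y,Z)^{\mathsf T}M(n)^\dagger\col(Y,Z)$. This gives a positive semidefinite flat Hankel extension $M(n+1)$.
\item Your Step 3, carried out at level $n+1$, then yields the $(\Rank M(n))$--atomic measure. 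You must still check that multiplication by $x$ is well defined and symmetric on $\RR^{(n+1)p}/\Ker M(n)$.
\end{enumerate}
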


	\begin{theorem}
		[{\cite[Theorem 2.18]{Kim22}}]
		\label{th:Stieltjes-matricial}
		Let $n, p \in \NN$ and let $\mathcal S:=(S_0,S_1,\ldots,S_{2n})\in (\Sym_p(\RR))^{2n+1}$
		be a sequence with a moment matrix $M(n)$.
		Then the following statements are equivalent:
		\begin{enumerate}
			\item 
			There exists a  
			$[0,\infty)$--representing matrix measure for $\mathcal S$.
			\item 
			There exists a $(\Rank M(n))$--atomic 
			$[0,\infty)$--representing matrix measure for $\mathcal S$.
			\item 
			$M(n)$ and  
            $\cH_x:=\bigl(S_{i+j-1}\bigr)_{i,j=1}^{n}$
			are positive semidefinite,
			and 
			$
			\cC\big(
			\col(S_{n+i})_{i\in [n]}
			\big)
			\subseteq
			\cC(\cH_x).
			$
		\end{enumerate}
	\end{theorem}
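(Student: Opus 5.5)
The plan is to prove (1)$\Rightarrow$(3), then (3)$\Rightarrow$(2), and finally note that (2)$\Rightarrow$(1) is trivial.

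\emph{Necessity.} Let $\mu$ be a $[0,\infty)$--representing measure. First I would replace $\mu$ by a finitely atomic $[0,\infty)$--representing measure for the same moments $S_0,\ldots,S_{2n}$. This uses a matricial Carath\'eodory/Richter--Tchakaloff argument: the moment vector lies in the convex cone generated by $\{(x^iA)_{i\in[0;2n]}: x\ge0,\ A\succeq0\}$, a finite-dimensional cone, so finitely many generators suffice. So assume $\mu=\sum_j\delta_{x_j}A_j$ with $x_j\ge0$, and write $V(x)\coloneqq\col(x^iI_p)_{i\in[0;n-1]}$.

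Then $M(n)=\sum_j \col(x_j^iI_p)_{i\in[0;n]}\,A_j\,\row(x_j^iI_p)_{i\in[0;n]}\succeq0$. Similarly $\mathcal{H}_x=\sum_j x_jV(x_j)A_jV(x_j)^{\mathsf T}\succeq0$, since each $x_j\ge0$. Moreover $\col(S_{n+i})_{i\in[n]}=\sum_j x_jV(x_j)A_j\,x_j^{n}$. Atoms at $0$ contribute nothing to either side. For $x_j>0$, the column space of $x_jV(x_j)A_j$ equals that of $x_jV(x_j)A_jV(x_j)^{\mathsf T}$. Because all summands are psd, the column space of a sum of psd matrices is the sum of their column spaces. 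This gives $\mathcal{C}(\col(S_{n+i})_{i\in[n]})\subseteq\mathcal{C}(\mathcal{H}_x)$.

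\emph{Sufficiency.} The strategy is a flat extension. Using the range condition, pick $W\in M_{np\times p}(\mathbb{R})$ with $\mathcal{H}_xW=\col(S_{n+i})_{i\in[n]}$. Set $S_{2n+1}\coloneqq \col(S_{n+i})_{i\in[n]}^{\mathsf T}W$. Then the extended localizing Hankel matrix $(S_{i+j-1})_{i,j=1}^{n+1}$ is psd and is a flat extension of $\mathcal{H}_x$; this is the standard Smuljan lemma argument. Next, choose $S_{2n+2}$ so that $M(n+1)$ is psd and a flat extension of $M(n)$. Here I would use the psd-ness and range inclusion of $M(n)$, which follow from $M(n)\succeq0$ together with the new last block column being determined by block column relations.

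Given a flat $M(n+1)$, Theorem~\ref{th:Hamburger-matricial} applied at level $n+1$ yields a $(\rank M(n+1))$--atomic $\mathbb{R}$--representing measure. For flat data the range condition holds automatically, and $\rank M(n+1)=\rank M(n)$.

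It remains to show that the atoms lie in $[0,\infty)$. For each atom $x_j$, take a matrix Lagrange interpolation polynomial $F_j$ of degree at most $n$ that vanishes at the other atoms and equals $I_p$ at $x_j$. Then $L(xF_j^{\mathsf T}F_j)=x_jA_j$. The integrand has degree at most $2n+1$, so this is a compression of the extended localizing matrix, which is psd; hence $x_jA_j\succeq0$ and $x_j\ge0$.

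\emph{Main obstacle.} The degree bookkeeping is the hard part. The number of distinct atoms can exceed $n$, and $xF_j^{\mathsf T}F_j$ must fit into the extended localizing matrix. I would handle this by first reducing $F_j$ modulo the column relation $H$ of the flat extension, so that its degree drops to at most $n$. If that fails, the fallback is to run the Hamburger theorem on the shifted sequence $(S_1,\ldots,S_{2n+1})$ to obtain $\nu$ on $\mathbb{R}$. One would then define $\mu\coloneqq x^{-1}\cdot\nu+\delta_0\bigl(S_0-\int x^{-1}d\nu\bigr)$ and verify that the mass at $0$ is psd using $M(n)\succeq0$.
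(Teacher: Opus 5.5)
The paper gives no proof of this statement; it quotes it from \cite{Kim22}. Your argument is a self-contained reduction to the Hamburger case, and I believe it is correct once two steps are stated precisely. The necessity argument (Richter/Carath\'eodory reduction, then column spaces of sums of psd matrices) is fine. For sufficiency, your two-step extension matches the one the paper performs in the proof of Theorem~\ref{th:mainTheorem_halfline}: pick $S_{2n+1}$ by a Schur-complement choice so that $\mathcal{H}_x(n)\succeq0$, then take $S_{2n+2}$ via $M(n)^\dagger$. The difference comes at the end. There the paper invokes Theorem~\ref{th:Stieltjes-matricial} at level $n+1$ to place the atoms in $[0,\infty)$. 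You use only Theorem~\ref{th:Hamburger-matricial} and prove nonnegativity of the atoms directly from $\mathcal{H}_x(n)\succeq0$. Your route therefore shows the Stieltjes theorem follows from Bakonyi--Woerdeman plus a localizing argument. The citation buys brevity.

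The first step to make precise is Step~2. The block columns $1,\dots,n$ of $\mathcal{H}_x(n)=(S_{i+j-1})_{i,j=1}^{n+1}$ are exactly $X^1,\dots,X^n$ of $M(n)$. Your flat choice of $S_{2n+1}$ therefore gives $\col(S_{n+1},\dots,S_{2n+1})=\row(X^1,\dots,X^n)W\in\mathcal{C}(M(n))$, which is what Albert's criterion needs.

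The second is Step~4, where $F_j$ as literally stated cannot exist once there are more than $n+1$ distinct atoms. Each column would be a vector polynomial of degree at most $n$ vanishing at $n+1$ or more points, hence zero. What you need is interpolation modulo $\Ker A_m$, and your reduction modulo $H$ provides it, for the following reason. Let $H(x)=x^{n+1}I_p-\sum_{i=0}^n x^iC_i$ be the relation coming from flatness. Then $0=\coef(H)^{\mathsf T}M(n+1)\coef(H)=\sum_m H(x_m)^{\mathsf T}A_mH(x_m)$, which forces $A_mH(x_m)=0$ for every atom. Now start from $\ell_jI_p$, with $\ell_j$ the scalar Lagrange polynomial of any degree. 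Left division gives $\ell_jI_p=HQ+\widetilde F_j$ with $\deg\widetilde F_j\le n$ and $A_m\widetilde F_j(x_m)=\ell_j(x_m)A_m$. Consequently $\coef(\widetilde F_j)^{\mathsf T}\mathcal{H}_x(n)\coef(\widetilde F_j)=x_jA_j\succeq0$.

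Equivalently, write $A_m=D_mD_m^{\mathsf T}$. Since $\mu$ is $(\rank M(n))$--atomic, $\row\bigl(\col(x_m^iI_p)_{i\in[0;n]}D_m\bigr)_m$ has full column rank, and surjectivity of its transpose is exactly this degree-$\le n$ interpolation.

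Your fallback should be dropped. The measure $x^{-1}\cdot\nu$ is positive only if $\supp\nu\subseteq(0,\infty)$, which is the same positivity question you were trying to avoid.
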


\section{Multiplicity of an Atom in a given Minimal Measure}
\label{sec:multiplicity-prescribed-atom}

The formula \eqref{th:theorem-subsequent-moments-pt2-condition} below is implicit in the proof of \cite[Theorem~1.1]{ZZ+}, where it is used for a single prescribed point. We isolate it here because its precise equality, rather than only the resulting lower bound, will be applied at several prescribed points in Sections \ref{sec:solution-to-the-K-MRTMP} and \ref{sec:solution-to-the-K-MRTMP-v2}. It expresses the multiplicity of an atom at a prescribed point \(t\) in terms of the moments written in the shifted basis associated with \(x-t\).

\begin{theorem}
  \label{th:theorem-subsequent-moments}
  Let $n,p\in\mathbb{N}$, and let
  \[
    \mathcal{S}
    \coloneqq
    (S_0,S_1,\ldots,S_{2n})
    \in\bigl(\Sym_p(\mathbb{R})\bigr)^{2n+1}
  \]
  be a sequence with moment matrix $M(n)$. Suppose that $\mathcal{S}$
  admits a $(\rank M(n))$--atomic $\mathbb{R}$--representing measure
  $\mu$. Set
  \[
    S_{2n+1}
    \coloneqq
    \int_{\mathbb{R}}x^{2n+1}\,d\mu,
  \]
  and fix $t\in\mathbb{R}$.
For $i\in[0;2n+1]$, define
\[
  \mathcal{E}_t(S_i)
  \coloneqq
  L_{\mathcal{S}}\bigl((x-t)^i\bigr)
  =
  \sum_{\ell=0}^i
  \binom{i}{\ell}(-1)^\ell t^\ell S_{i-\ell},
\]
where, for $i=2n+1$, the operator $L_{\mathcal{S}}$ is extended {to $\RR[x]_{\leq 2n+1}$} by {setting}
$L_{\mathcal{S}}(x^{2n+1})\coloneqq S_{2n+1}$. Set
\[
  \mathcal{T}_t
  \equiv
  \mathcal{T}_t^{(2n)}
  \coloneqq
  \bigl(
    \mathcal{E}_t(S_0),
    \mathcal{E}_t(S_1),
    \ldots,
    \mathcal{E}_t(S_{2n})
  \bigr).
\]
Set
  \[
    \mathcal{H}_t
    \coloneqq
    \bigl(\mathcal{E}_t(S_{i+j-1})\bigr)_{i,j=1}^n,
    \qquad
    K_t
    \coloneqq
    \col\bigl(\mathcal{E}_t(S_{n+i})\bigr)_{i\in[n]}.
  \]
  Then
  \begin{equation}
    \label{th:theorem-subsequent-moments-pt2-condition}
    \mult_{\mu}t
    =
    p-\Card\Dep\!\left(X_{\mathcal{T}_t}^n\right)
    +\Card A_{\mathcal{T}_t}
    -
    \left[
      \rank
      \begin{pmatrix}
        \mathcal{H}_t & K_t\\
        K_t^{\mathsf T} & \mathcal{E}_t(S_{2n+1})
      \end{pmatrix}
      -
      \rank
      \begin{pmatrix}
        \mathcal{H}_t\\
        K_t^{\mathsf T}
      \end{pmatrix}
    \right],
  \end{equation}
  where 
  $
  A_{\mathcal{T}_t}  
  \coloneqq
  \bigcup_{i=1}^n
  \Depn_1\!\left(X_{\mathcal{T}_t}^i\right)
  $
  is as in \eqref{def:def-of-A_t}.
\end{theorem}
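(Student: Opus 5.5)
The plan is to read $\mult_\mu t$ off a rank count in the flat extension produced by $\mu$, working throughout in the shifted basis $(x-t)^i$. First extend the data by $\mu$ itself: put $S_{2n+1}=\int x^{2n+1}d\mu$ (as in the statement) and $S_{2n+2}\coloneqq\int x^{2n+2}d\mu$. Let $M_{\mathcal{T}_t}(n+1)$ be the moment matrix of $\mathcal{E}_t(S_0),\dots,\mathcal{E}_t(S_{2n+2})$. Since $\mu$ is $(\rank M(n))$--atomic, we have
\[
\rank M(n)\le\rank M_{\mathcal{T}_t}(n+1)\le\sum_x\mult_\mu x=\rank M(n),
\]
using Proposition~\ref{prop:change-of-basis}. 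Hence $M_{\mathcal{T}_t}(n+1)$ is a flat extension of $M_{\mathcal{T}_t}(n)$. Write $\mu=\sum_k\delta_{x_k}A_k$. Then $M_{\mathcal{T}_t}(n+1)=V^{\mathsf T}DV$, where $D=\operatorname{diag}(A_k)$ and $V$ is the block Vandermonde matrix in the powers $(x_k-t)^i$. The block column $X^{i}_{\mathcal{T}_t}$ corresponds to the function $(x-t)^i$.

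\emph{Step 1: columns of positive degree.} Consider the columns $X^1_{\mathcal{T}_t},\dots,X^{n+1}_{\mathcal{T}_t}$ of $M_{\mathcal{T}_t}(n+1)$. They equal $V^{\mathsf T}D\,\Delta V_{\le n}$, where $\Delta=\operatorname{diag}((x_k-t)I_p)$ and $V_{\le n}$ uses powers $0,\dots,n$. Multiplication by $x-t$ annihilates the atom at $t$ and is invertible on every other atom. The number of atoms is at most $\rank M(n)\le (n+1)p$ counted with multiplicity, so the interpolation property of minimal measures should give that this column set has rank $\sum_{x_k\ne t}\rank A_k=\rank M(n)-\mult_\mu t$.

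By flatness, deleting the last block row or the first block row does not change the rank of any set of columns. Deleting the first block row, the columns $X^1,\dots,X^{n+1}$ become exactly the localizing block matrix
\[
\begin{pmatrix}\mathcal{H}_t & K_t\\ K_t^{\mathsf T} & \mathcal{E}_t(S_{2n+1})\end{pmatrix}.
\]
Deleting the last block row instead, the first $n$ of them become $C^{(1)}_{\mathcal{T}_t}(n,p)$, and the same flatness argument shows that $\rank\begin{pmatrix}\mathcal{H}_t\\ K_t^{\mathsf T}\end{pmatrix}=\rank C^{(1)}_{\mathcal{T}_t}(n,p)$. Combining the two rank identities gives
\[
\mult_\mu t=\rank M(n)-\rank C^{(1)}_{\mathcal{T}_t}(n,p)-\bigl[\text{bracket in }\eqref{th:theorem-subsequent-moments-pt2-condition}\bigr].
\]

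\emph{Step 2: combinatorics.} Counting ranks column by column, and using \eqref{eq:rank-change-of-basis}, we get
\[
\rank M(n)=\sum_{i=0}^n\bigl(p-\Card\Dep(X^i_{\mathcal{T}_t})\bigr),\qquad
\rank C^{(1)}_{\mathcal{T}_t}(n,p)=\sum_{i=1}^n\bigl(p-\Card\Dep_1(X^i_{\mathcal{T}_t})\bigr).
\]
Their difference is
\[
p-\Card\Dep(X^n_{\mathcal{T}_t})+\sum_{i=1}^n\bigl(\Card\Dep_1(X^i_{\mathcal{T}_t})-\Card\Dep(X^{i-1}_{\mathcal{T}_t})\bigr).
\]
I will show $\Dep(X^{i-1}_{\mathcal{T}_t})\subseteq\Dep_1(X^i_{\mathcal{T}_t})$ and that the set difference equals $\Depn_1(X^i_{\mathcal{T}_t})$; summing over $i$ then yields $\Card A_{\mathcal{T}_t}$.

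For the inclusion, a relation exhibiting $j\in\Dep(X^{i-1})$ is multiplied by $x-t$, which stays valid by block--recursive generation (valid since $M(n)$ has a representing measure, and by Proposition~\ref{prop:block-recursive-generation-change-of-basis}). The multiplied relation involves only the columns $X^1,\dots,X^i$.

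For the identification of the difference, take $j\in\Dep_1(X^i)\setminus\Dep(X^{i-1})$. Then $\zeta(j)\ge i$ because $\Dep$ is monotone in the block level, again by recursiveness. Also $\zeta(j)\le\zeta^{(1)}(j)\le i$ because $\Dep_1\subseteq\Dep$. Hence $\zeta(j)=\zeta^{(1)}(j)=i$. The converse inclusion is immediate from the definitions.

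\emph{Main obstacle.} I expect the delicate point to be Step 1: proving that the columns $X^1,\dots,X^{n+1}$ have rank exactly $\rank M(n)-\mult_\mu t$, and that trimming a block row preserves the relevant ranks. I would first try to deduce this from $V$ being injective on the range of $D^{1/2}$, which is what minimality of $\mu$ (flatness) provides. If that is awkward, I would instead invoke Lemma~\ref{le:rank-completion} with $A_1$ the first block row, $A_2$ the middle rows and $C$ the last block row.
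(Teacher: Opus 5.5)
Your argument is correct in substance, but there is an indexing slip to fix. The matrix $\begin{pmatrix}\mathcal{H}_t & K_t\\ K_t^{\mathsf T} & \mathcal{E}_t(S_{2n+1})\end{pmatrix}=\bigl(\mathcal{E}_t(S_{i+j-1})\bigr)_{i,j=1}^{n+1}$ comes from the block columns $X^1,\dots,X^{n+1}$ of $M_{\mathcal{T}_t}(n+1)$ by deleting the \emph{last} block row. Deleting the first one gives $\bigl(\mathcal{E}_t(S_{i+j})\bigr)_{i,j=1}^{n+1}$, which contains $\mathcal{E}_t(S_{2n+2})$. Your blanket claim that deleting the first block row preserves the rank of every column set is also false: for $\mu=\delta_t$ the block column $X^0$ has rank $1$, but its truncation vanishes.

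Neither point matters if you use only last-row deletion, which preserves all column ranks by flatness and symmetry. Then $\begin{pmatrix}\mathcal{H}_t\\ K_t^{\mathsf T}\end{pmatrix}$ is literally $C^{(1)}_{\mathcal{T}_t}(n,p)$, and Step 1 reduces to showing that $X^1,\dots,X^{n+1}$ in $M_{\mathcal{T}_t}(n+1)$ have rank $\rank M(n)-\mult_\mu t$. Your $D^{1/2}$ idea proves this:
\begin{itemize}
\item $\rank M(n)=\rank D$ forces $\mathcal{C}(D^{1/2}V_{\le n})=\mathcal{C}(D^{1/2})$, hence $\mathcal{C}(D^{1/2}V)=\mathcal{C}(D^{1/2})$.
\item So $V^{\mathsf T}D^{1/2}$ is injective on $\mathcal{C}(D^{1/2})$.
\item Therefore $\rank(V^{\mathsf T}D\Delta V_{\le n})=\rank(\Delta D^{1/2}V_{\le n})=\rank D-\rank\mu(\{t\})$, since $\Delta$ kills exactly the block of the atom $t$ and is invertible on the others.
\end{itemize}
Step 2 is correct as written.

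Your route differs from the paper's. The paper first proves $\mult_\mu t\ge m$, where $m$ is the right-hand side, by a determinant argument:
\begin{itemize}
\item It propagates the optimal block column relation of Proposition~\ref{pr:t-optimal-block-column-relation-existence} to $M_{\mathcal{T}_t}(n+1)$.
\item It augments this, using the relations encoded in the rank identities, to a relation $Q$ with invertible leading coefficient.
\item Via Lemma~\ref{lem:determinant-matrix-polynomial}, it shows that $\det Q(x-t)$ vanishes at $t$ to order at least $m+(n+1)p-\rank M(n)$.
\item It then uses Lemma~\ref{lem:support-from-block-column-relation} to cap the total multiplicity of the other atoms by $\rank M(n)-m$.
\end{itemize}
Only afterwards does it remove the mass at $t$ and compare ranks of the positive-degree block columns. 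That is your Step 1, phrased through $\mu-\delta_t\mu(\{t\})$ instead of the Vandermonde factorization. There the paper asserts the identity $\rank\row(X^i)_{i\in[n]}=\rank M_{\mathcal{T}_t}(n)-\bigl(p-\Card\Dep(X^n_{\mathcal{T}_t})\bigr)-\Card A_{\mathcal{T}_t}$ ``by the definitions''; your Step 2 actually proves it via block-recursive generation.

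That rank computation never uses the contradiction hypothesis $m'>m$, so your approach shows it yields the equality on its own. It treats $t\in\supp\mu$ and $t\notin\supp\mu$ uniformly and avoids the optimal-relation and determinant machinery entirely. What the paper's extra half buys is an explicit block column relation whose determinant places all remaining atoms among the zeros of a polynomial of degree $\rank M(n)-m$.
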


\begin{proof}
  Let $m$ denote the right-hand side of
  \eqref{th:theorem-subsequent-moments-pt2-condition}. 
  We need to prove that
  \begin{equation}\label{eq:m-equals-multiplicity}
    m=\mult_\mu t.
  \end{equation}
  
The proof has two parts. First, we construct a matrix-polynomial column
relation whose determinant yields the lower bound
$\mult_{\mu} t\geq m$. Second, we remove the mass at $t$ and compare the
ranks of the resulting flat moment matrices to obtain the reverse
inequality.

  Define
  \[
    \mathcal{H}_{0,t}
    \coloneqq
    \col\bigl(\mathcal{E}_t(S_{i-1})\bigr)_{i\in[n]}.
  \]
  Since $\mu$ is a $(\rank M(n))$--atomic representing measure, its
  moment sequence has a positive semidefinite flat extension $M(n+1)$. More precisely, $M(n+1) \succeq 0$ and $\Rank M(n+1) = \Rank M(n)$. Set
  \[
    S_{2n+2}
    \coloneqq
    \int_{\mathbb{R}}x^{2n+2}\,d\mu
    \qquad\text{and}\qquad
    \mathcal{E}_t(S_{2n+2})
    \coloneqq
    \int_{\mathbb{R}}(x-t)^{2n+2}\,d\mu.
  \]
Write
  \[
    \mathcal{T}_t^{(2n+2)}
    \coloneqq
    \bigl(
      \mathcal{E}_t(S_0),
      \mathcal{E}_t(S_1),
      \ldots,
      \mathcal{E}_t(S_{2n+2})
    \bigr).
  \]
  The moment matrices
  \[
    M(n+1)
    \coloneqq
    \bigl(S_{i+j-2}\bigr)_{i,j=1}^{n+2}
  \qquad
  \text{and}
  \qquad
    M_{\mathcal{T}_t^{(2n+2)}}(n+1)
    \coloneqq
    \bigl(\mathcal{E}_t(S_{i+j-2})\bigr)_{i,j=1}^{n+2}
  \]
  are positive semidefinite and satisfy
  \[
    \rank M_{\mathcal{T}_t^{(2n+2)}}(n+1)
    =
    \rank M_{\mathcal{T}_t}(n)
    =
    \rank M(n).
  \]
  Consequently,
  \[
    \mathcal{C}\!\left(
      \col\bigl(\mathcal{E}_t(S_{n+i})\bigr)_{i\in[n+1]}
    \right)
    \subseteq
    \mathcal{C}\!\left(M_{\mathcal{T}_t}(n)\right).
  \]
  Equivalently,
  \begin{equation}\label{eq:rank-preserving-extension}
    \rank
    \begin{pmatrix}
      \mathcal{H}_{0,t} & \mathcal{H}_t & K_t\\
      \mathcal{E}_t(S_n) & K_t^{\mathsf T}
        & \mathcal{E}_t(S_{2n+1})
    \end{pmatrix}
    =
    \rank
    \begin{pmatrix}
      \mathcal{H}_{0,t} & \mathcal{H}_t\\
      \mathcal{E}_t(S_n) & K_t^{\mathsf T}
    \end{pmatrix}.
  \end{equation}

  Let $P\in M_p(\mathbb{R})$ be a permutation matrix that arranges the
  columns in the order $C_1,C_2,C_3$, where
  \begin{align*}
    C_1
    &\coloneqq
    \Depn_0\!\left(
      X_{\mathcal{T}_t^{(2n+2)}}^{n+1}
    \right),\\
    C_2
    &\coloneqq
    \Depn_1\!\left(
      X_{\mathcal{T}_t^{(2n+2)}}^{n+1}
    \right),\\
    C_3
    &\coloneqq
    \Dep\!\left(
      X_{\mathcal{T}_t^{(2n+2)}}^n
    \right).
  \end{align*}
  Set
  \begin{equation}\label{eq:def-p0-rt-st-q}
    p_0
    \coloneqq
    p-\Card\Dep\!\left(X_{\mathcal{T}_t}^n\right),
    \qquad
    r_t
    \coloneqq
    p_0-m+\Card A_{\mathcal{T}_t},
  \end{equation}
  and
  \[
    s_t
    \coloneqq
    m-\Card A_{\mathcal{T}_t},
    \qquad
    q\coloneqq p-p_0.
  \]
By the definitions of $C_1$, $C_2$, and $C_3$, and by the definitions
of $m$, $r_t$, $s_t$, and $q$, we have
\[
  \Card C_1=r_t,
  \qquad
  \Card C_2=s_t,
  \qquad
  \Card C_3=q.
\]

  Decompose
  \begin{equation}\label{eq:def-widehat-Kt}
    \widehat{K}_t\equiv
    \begin{pmatrix}
      \widehat{K}_{t,1}
      &
      \widehat{K}_{t,2}
      &
      \widehat{K}_{t,3}
    \end{pmatrix}
    \coloneqq
    K_tP,
  \end{equation}
  where the three blocks have $r_t$, $s_t$, and $q$ columns,
  respectively. Also write
  \begin{equation}\label{eq:def-widehat-Zt}
    \widehat{Z}_t
    \coloneqq
    P^{\mathsf T}\mathcal{E}_t(S_{2n+1})P
    \equiv
    \begin{pmatrix}
      \widehat{Z}_{t,1}
        & \widehat{Z}_{t,2}
        & \widehat{Z}_{t,3}\\
      \widehat{Z}_{t,2}^{\mathsf T}
        & \widehat{Z}_{t,4}
        & \widehat{Z}_{t,5}\\
      \widehat{Z}_{t,3}^{\mathsf T}
        & \widehat{Z}_{t,5}^{\mathsf T}
        & \widehat{Z}_{t,6}
    \end{pmatrix}.
  \end{equation}
  The block sizes are
  \begin{align*}
    \widehat{Z}_{t,1}&\in M_{r_t}(\mathbb{R}),&
    \widehat{Z}_{t,2}&\in M_{r_t\times s_t}(\mathbb{R}),&
    \widehat{Z}_{t,3}&\in M_{r_t\times q}(\mathbb{R}),\\
    \widehat{Z}_{t,4}&\in M_{s_t}(\mathbb{R}),&
    \widehat{Z}_{t,5}&\in M_{s_t\times q}(\mathbb{R}),&
    \widehat{Z}_{t,6}&\in M_q(\mathbb{R}).
  \end{align*}

  Since $P$ is a permutation matrix,
  \begin{align}
    \begin{pmatrix}
      \mathcal{H}_t & \widehat{K}_t\\
      \widehat{K}_t^{\mathsf T} & \widehat{Z}_t
    \end{pmatrix}
    &=
    (I_{np}\oplus P^{\mathsf T})
    \begin{pmatrix}
      \mathcal{H}_t & K_t\\
      K_t^{\mathsf T} & \mathcal{E}_t(S_{2n+1})
    \end{pmatrix}
    (I_{np}\oplus P),\label{eq:permuted-HKZ}
  \end{align}
  It follows from the definition of $r_t$ and rank invariance under multiplication by invertible matrices
  that
  \begin{equation}\label{eq:rank-equalities-t}
    \rank
    \begin{pmatrix}
      \mathcal{H}_t & \widehat{K}_{t,1}\\
      \widehat{K}_{t,1}^{\mathsf T} & \widehat{Z}_{t,1}
    \end{pmatrix}
    =
    \rank
    \begin{pmatrix}
      \mathcal{H}_t & \widehat{K}_t\\
      \widehat{K}_t^{\mathsf T} & \widehat{Z}_t
    \end{pmatrix}
    =
    \rank
    \begin{pmatrix}
      \mathcal{H}_t\\
      \widehat{K}_t^{\mathsf T}
    \end{pmatrix}
    +r_t.
  \end{equation}

  Define
  \begin{equation}\label{eq:def-widehat-Tn}
    \widehat{T}_n\equiv
    \begin{pmatrix}
      \widehat{T}_{n,1}
      &
      \widehat{T}_{n,2}
      &
      \widehat{T}_{n,3}
    \end{pmatrix}
    \coloneqq
    \mathcal{E}_t(S_n)P,
  \end{equation}
  where the three blocks have $r_t$, $s_t$, and $q$ columns,
  respectively. Then
  \begin{align}
    \begin{pmatrix}
      \mathcal{H}_{0,t} & \mathcal{H}_t & \widehat{K}_t\\
      \widehat{T}_n^{\mathsf T}
        & \widehat{K}_t^{\mathsf T}
        & \widehat{Z}_t
    \end{pmatrix}
    &=
    (I_{np}\oplus P^{\mathsf T})
    \begin{pmatrix}
      \mathcal{H}_{0,t} & \mathcal{H}_t & K_t\\
      \mathcal{E}_t(S_n) & K_t^{\mathsf T}
        & \mathcal{E}_t(S_{2n+1})
    \end{pmatrix}
    (I_{(n+1)p}\oplus P).
    \label{eq:permuted-extended-matrix}
  \end{align}
  Thus, \eqref{eq:rank-preserving-extension} implies
  \begin{equation}\label{eq:rank-permuted-extension}
    \rank
    \begin{pmatrix}
      \mathcal{H}_{0,t} & \mathcal{H}_t & \widehat{K}_t\\
      \widehat{T}_n^{\mathsf T}
        & \widehat{K}_t^{\mathsf T}
        & \widehat{Z}_t
    \end{pmatrix}
    =
    \rank
    \begin{pmatrix}
      \mathcal{H}_{0,t} & \mathcal{H}_t\\
      \widehat{T}_n^{\mathsf T} & \widehat{K}_t^{\mathsf T}
    \end{pmatrix}
    =
    \rank M_{\mathcal{T}_t}(n).
  \end{equation}

  We next construct a block column relation of
  $M_{\mathcal{T}_t^{(2n+2)}}(n+1)$ whose determinant has a zero at the
  origin of sufficiently high order.

  By Proposition~\ref{pr:t-optimal-block-column-relation-existence},
  there exists a block column relation
  \[
    H(x)=\sum_{i=0}^n x^iH_i
  \]
  of $M_{\mathcal{T}_t}(n)$ satisfying the lower bound
  \eqref{de:optimal-block-column-relation-2}, with
  \begin{equation}\label{eq:structure-of-Hi}
    H_i
    =
    \begin{pmatrix}
      \mathbf{0}_{p\times p_0} & \widehat{H}_i
    \end{pmatrix},
    \qquad i\in[0;n],
  \end{equation}
  and
  \[
    \rank H_n=p-p_0=q.
  \]
  Since $M_{\mathcal{T}_t^{(2n+2)}}(n+1)$ is block--recursively
  generated, the relation $H$ propagates to the extended moment matrix:
  \begin{equation}\label{eq:extended-relation-H}
    \begin{pmatrix}
      \mathcal{H}_t & K_t\\
      K_t^{\mathsf T} & \mathcal{E}_t(S_{2n+1})
    \end{pmatrix}
    \col(H_i)_{i\in[0;n]}
    =
    \mathbf{0}_{(n+1)p\times p}.
  \end{equation}
  Applying the permutation gives
  \begin{equation}\label{eq:permuted-relation-H}
    \begin{pmatrix}
      \mathcal{H}_t & \widehat{K}_t\\
      \widehat{K}_t^{\mathsf T} & \widehat{Z}_t
    \end{pmatrix}
    (I_{np}\oplus P^{\mathsf T})
    \col(H_i)_{i\in[0;n]}
    =
    \mathbf{0}_{(n+1)p\times p}.
  \end{equation}

  In accordance with the partition induced by $P$, write
  \begin{equation}\label{eq:structure-of-Hn}
    P^{\mathsf T}H_n
    =
    \begin{pmatrix}
      \mathbf{0}_{r_t\times p_0} & G_1\\
      \mathbf{0}_{s_t\times p_0} & G_2\\
      \mathbf{0}_{q\times p_0} & G_3
    \end{pmatrix}.
  \end{equation}
  By the definition of $P$, the last $q$ columns correspond precisely
  to the dependent columns of $X_{\mathcal{T}_t}^n$. Since
  $\rank H_n=q$, the matrix $G_3\in M_q(\mathbb{R})$ is invertible.

The first equality in~\eqref{eq:rank-equalities-t} shows that the columns of the block
containing $\widehat{K}_{t,2}$ and the corresponding blocks of
$\widehat{Z}_t$ belong to the column space generated by the preceding
blocks. Hence, there exist matrices
  \[
    J_1\in M_{np\times s_t}(\mathbb{R}),
    \qquad
    J_2\in M_{r_t\times s_t}(\mathbb{R})
  \]
  such that
  \begin{equation}\label{eq:relation-J}
    \begin{pmatrix}
      \mathcal{H}_t
        & \widehat{K}_{t,1}
        & \widehat{K}_{t,2}\\
      \widehat{K}_{t,1}^{\mathsf T}
        & \widehat{Z}_{t,1}
        & \widehat{Z}_{t,2}\\
      \widehat{K}_{t,2}^{\mathsf T}
        & \widehat{Z}_{t,2}^{\mathsf T}
        & \widehat{Z}_{t,4}\\
      \widehat{K}_{t,3}^{\mathsf T}
        & \widehat{Z}_{t,3}^{\mathsf T}
        & \widehat{Z}_{t,5}^{\mathsf T}
    \end{pmatrix}
    \begin{pmatrix}
      -J_1\\
      -J_2\\
      I_{s_t}
    \end{pmatrix}
    =
    \mathbf{0}_{(n+1)p\times s_t}.
  \end{equation}
  Similarly, the first equality in~\eqref{eq:rank-permuted-extension} implies that the relevant block of $r_t$ columns belongs to the column space generated by the preceding blocks. Therefore, there exist matrices
  \[
    U_1\in M_{p\times r_t}(\mathbb{R}),
    \qquad
    U_2\in M_{np\times r_t}(\mathbb{R})
  \]
  such that
  \begin{equation}\label{eq:relation-U}
    \begin{pmatrix}
      \mathcal{H}_{0,t}
        & \mathcal{H}_t
        & \widehat{K}_{t,1}\\
      \widehat{T}_{n,1}^{\mathsf T}
        & \widehat{K}_{t,1}^{\mathsf T}
        & \widehat{Z}_{t,1}\\
      \widehat{T}_{n,2}^{\mathsf T}
        & \widehat{K}_{t,2}^{\mathsf T}
        & \widehat{Z}_{t,2}^{\mathsf T}\\
      \widehat{T}_{n,3}^{\mathsf T}
        & \widehat{K}_{t,3}^{\mathsf T}
        & \widehat{Z}_{t,3}^{\mathsf T}
    \end{pmatrix}
    \begin{pmatrix}
      -U_1\\
      -U_2\\
      I_{r_t}
    \end{pmatrix}
    =
    \mathbf{0}_{(n+1)p\times r_t}.
  \end{equation}
  The second equality in~\eqref{eq:rank-equalities-t} implies
  \begin{equation}\label{eq:rank-U1}
    \rank U_1=r_t.
  \end{equation}

  Combining \eqref{eq:permuted-relation-H},
  \eqref{eq:structure-of-Hi}, \eqref{eq:structure-of-Hn},
  \eqref{eq:relation-J}, and \eqref{eq:relation-U}, we obtain
  \[
    \begin{pmatrix}
      \mathcal{H}_{0,t} & \mathcal{H}_t & \widehat{K}_t\\
      \widehat{T}_n^{\mathsf T}
        & \widehat{K}_t^{\mathsf T}
        & \widehat{Z}_t
    \end{pmatrix}
    \begin{pmatrix}
      -U_1 & \mathbf{0}_{p\times s_t} & \mathbf{0}_{p\times q}\\
      -U_2 & -J_1 & \col(\widehat{H}_i)_{i\in[0;n-1]}\\
      I_{r_t} & -J_2 & G_1\\
      \mathbf{0}_{s_t\times r_t} & I_{s_t} & G_2\\
      \mathbf{0}_{q\times r_t} & \mathbf{0}_{q\times s_t} & G_3
    \end{pmatrix}
    =
    \mathbf{0}_{(n+1)p\times p}.
  \]
  Therefore,
  \begin{equation}\label{eq:relation-widehat-Qn}
    \begin{pmatrix}
      \widehat{K}_t\\
      \widehat{Z}_t
    \end{pmatrix}
    \widehat{Q}_n
    =
    \row\bigl(\widehat{X}_{\mathcal{T}_t}^i\bigr)_{i\in[n]}
    \begin{pmatrix}
      U_2 & J_1 & \col(-\widehat{H}_i)_{i\in[0;n-1]}
    \end{pmatrix}
    +
    \widehat{X}_{\mathcal{T}_t}^0
    \begin{pmatrix}
      U_1 & \mathbf{0}_{p\times(s_t+q)}
    \end{pmatrix},
  \end{equation}
  where
  \begin{equation}\label{eq:def-widehat-Qn}
    \widehat{Q}_n
    \coloneqq
    \begin{pmatrix}
      I_{r_t} & -J_2 & G_1\\
      \mathbf{0}_{s_t\times r_t} & I_{s_t} & G_2\\
      \mathbf{0}_{q\times r_t} & \mathbf{0}_{q\times s_t} & G_3
    \end{pmatrix},
  \end{equation}
  and
  \begin{equation}\label{eq:def-permuted-block-columns}
    \row\bigl(\widehat{X}_{\mathcal{T}_t}^i\bigr)_{i\in[0;n]}
    \coloneqq
    (I_{np}\oplus P^{\mathsf T})
    \row\!\left(
      \col\bigl(\mathcal{E}_t(S_{i+j})\bigr)_{j\in[0;n]}
    \right)_{i\in[0;n]}.
  \end{equation}
  Since $G_3$ is invertible, so is $\widehat{Q}_n$.

  Decompose
  \begin{equation}\label{eq:block-decomposition-U2-J1}
    \begin{pmatrix}
      U_2 & J_1
    \end{pmatrix}
    =
    \begin{pmatrix}
      U_{2,0} & J_{1,0}\\
      U_{2,1} & J_{1,1}\\
      \vdots & \vdots\\
      U_{2,n-1} & J_{1,n-1}
    \end{pmatrix},
  \end{equation}
  where
  \[
    U_{2,i}\in M_{p\times r_t}(\mathbb{R}),
    \qquad
    J_{1,i}\in M_{p\times s_t}(\mathbb{R}).
  \]
  For $i\in[0;n-1]$, define
  \begin{equation}\label{eq:def-Qi}
    Q_i
    \coloneqq
    \begin{pmatrix}
      U_{2,i} & J_{1,i} & -\widehat{H}_i
    \end{pmatrix}.
  \end{equation}
  Then \eqref{eq:relation-widehat-Qn} becomes
  \begin{equation}\label{eq:relation-Qi-permuted}
    \begin{pmatrix}
      \widehat{K}_t\\
      \widehat{Z}_t
    \end{pmatrix}
    \widehat{Q}_n
    =
    \sum_{i=1}^n
      \widehat{X}_{\mathcal{T}_t}^iQ_{i-1}
    +
    \widehat{X}_{\mathcal{T}_t}^0
    \begin{pmatrix}
      U_1 & \mathbf{0}_{p\times(s_t+q)}
    \end{pmatrix}.
  \end{equation}

  Set
  \begin{equation}\label{eq:def-Qn}
    Q_n\coloneqq P\widehat{Q}_n.
  \end{equation}
  Undoing the permutation in~\eqref{eq:relation-Qi-permuted} yields
  \begin{equation}\label{eq:relation-Qi}
    \col\bigl(\mathcal{E}_t(S_{n+i})\bigr)_{i\in[n+1]}Q_n
    =
    \sum_{i=1}^nX_{\mathcal{T}_t}^iQ_{i-1}
    +
    X_{\mathcal{T}_t}^0
    \begin{pmatrix}
      U_1 & \mathbf{0}_{p\times(s_t+q)}
    \end{pmatrix}.
  \end{equation}
  Define
  \begin{equation}\label{eq:def-Q-polynomial}
    Q(x)
    \coloneqq
    x^{n+1}Q_n
    -
    \sum_{i=1}^n x^iQ_{i-1}
    -
    \begin{pmatrix}
      U_1 & \mathbf{0}_{p\times(s_t+q)}
    \end{pmatrix}.
  \end{equation}
  By \eqref{eq:relation-Qi}, the polynomial $Q(x)$ is a block column
  relation of $M_{\mathcal{T}_t^{(2n+2)}}(n+1)$. Hence,
  Proposition~\ref{cor:block-relations-change-of-basis} shows that
  $Q(x-t)$ is a block column relation of $M(n+1)$.

  By \eqref{eq:rank-U1},
  \begin{equation}\label{eq:rank-constant-coefficient}
    \rank
    \begin{pmatrix}
      U_1 & \mathbf{0}_{p\times(s_t+q)}
    \end{pmatrix}
    =
    r_t.
  \end{equation}
Define
\[
  \widetilde{W}_0
  \coloneqq
  \Ker\!\begin{pmatrix}U_1 & \mathbf{0}_{p\times(s_t+q)}\end{pmatrix},
  \qquad
  \widetilde{W}_i
  \coloneqq
  \widetilde{W}_0\cap\bigcap_{j=0}^{i-1}\Ker Q_j,
  \quad i\in[n].
\]
  Since $Q_i$ ends with the block $-\widehat{H}_i$, for every
  $i\in[0;n-1]$ one has
  \begin{equation}\label{eq:kernel-inclusion}
    v\in\bigcap_{j=0}^i\Ker\widehat{H}_j
    \quad\Longrightarrow\quad
    \begin{pmatrix}
      \mathbf{0}_{r_t+s_t}\\
      v
    \end{pmatrix}
    \in\widetilde{W}_{i+1}.
  \end{equation}
  Let
  \[
    \widetilde{s}_i\coloneqq\dim\widetilde{W}_i,
    \qquad i\in[0;n].
  \]
  It follows from~\eqref{eq:kernel-inclusion} that
  \begin{equation}\label{eq:kernel-dimension-inequality}
    \widetilde{s}_i
    \geq
    \dim\left(
      \bigcap_{j=0}^{i-1}\Ker\widehat{H}_j
    \right),
    \qquad i\in[n].
  \end{equation}

Defining
\[
  \Phi(H)\coloneqq
  \sum_{i=0}^n
  \dim\!\left(\bigcap_{j=0}^i\Ker H_j\right),
\]
we have
\begin{equation}\label{eq:lower-bound-H}
  \Phi(H)
  \geq
  (n+1)p_0
  +(n-z_s)q
  +\sum_{j=1}^{s-1}
    (z_{j+1}-z_j)(q-r_{s-j})
  +\Card A_{\mathcal{T}_t}.
\end{equation}

  Moreover, \eqref{eq:structure-of-Hi} implies that, for every
  $i\in[0;n]$,
  \[
    \bigcap_{j=0}^i\Ker H_j
    =
    \mathbb{R}^{p_0}
    \oplus
    \bigcap_{j=0}^i\Ker\widehat{H}_j.
  \]
  Therefore, for every $w\in[0;n]$,
  \begin{equation}\label{eq:kernel-dimension-H-widehat-H}
    \sum_{i=0}^w
      \dim\left(\bigcap_{j=0}^i\Ker H_j\right)
    =
    (w+1)p_0
    +
    \sum_{i=0}^w
      \dim\left(\bigcap_{j=0}^i\Ker\widehat{H}_j\right).
  \end{equation}
We have
\begin{align*}
  \sum_{i=0}^n\widetilde{s}_i
  &\geq
  \widetilde{s}_0
  +
  \sum_{i=1}^n
  \dim\left(
    \bigcap_{j=0}^{i-1}\Ker\widehat{H}_j
  \right)
  &&\text{by~\eqref{eq:kernel-dimension-inequality}}
  \notag\\
  &=
  \widetilde{s}_0
  +
  \sum_{i=0}^{n-1}
  \dim\left(
    \bigcap_{j=0}^i\Ker\widehat{H}_j
  \right)
  &&\text{by reindexing}
  \notag\\
  &=
  s_t+q
  +
  \sum_{i=0}^{n-1}
  \dim\left(
    \bigcap_{j=0}^i\Ker\widehat{H}_j
  \right)
  &&\text{by~\eqref{eq:rank-constant-coefficient}}
  \notag\\
  &=
  s_t+q
  +
  \sum_{i=0}^{n-1}
  \dim\left(
    \bigcap_{j=0}^i\Ker H_j
  \right)
  -np_0
  &&\text{by~\eqref{eq:kernel-dimension-H-widehat-H} with $w=n-1$}
  \notag\\
  &=
  s_t+q+\Phi(H)-(n+1)p_0
  &&\text{since $\Ker\widehat{H}_n=\{\mathbf{0}\}$}
  \notag\\
  &\geq
  m+(n-z_s+1)q
  +
  \sum_{j=1}^{s-1}
    (z_{j+1}-z_j)(q-r_{s-j})
  &&\text{by~\eqref{eq:lower-bound-H}}
  \notag\\
  &=
  m+(n+1)p
  -
  \left(
    (n+1)p_0
    +
    \sum_{j=1}^s z_{s-j+1}p_j
  \right)
  &&\text{by the definitions of $p_j$, $r_j$, and $q$}
  \notag\\
  &=
  m+(n+1)p-\rank M_{\mathcal{T}_t}(n)
  &&\text{by block--recursive generation of $M_{\mathcal{T}_t}(n)$}
  \notag\\
  &=
  m+(n+1)p-\rank M(n)
  &&\text{by Proposition~\ref{prop:change-of-basis}.}
  \label{eq:sum-kernel-dimensions-final}
\end{align*}
  Since $Q_n$ is invertible, Lemma~\ref{lem:determinant-matrix-polynomial}, together with the estimate above, yields 
  \begin{equation}\label{eq:det-Q-factorization} 
    \det Q(x-t) = (x-t)^{m+(n+1)p-\rank M(n)}g(x),
  \end{equation} 
  where $0\neq g(x)\in\mathbb{R}[x]$.
  Moreover,
  \[
    \deg\big(\det Q(x-t)\big)=(n+1)p,
  \]
  and therefore
  \begin{equation}\label{eq:degree-g}
    \deg g=\rank M(n)-m.
  \end{equation}

  Write
  \[
    \mu=\sum_{j=1}^{\ell}\delta_{x_j}A_j,
  \]
  where the points $x_1,\ldots,x_\ell$ are pairwise distinct,
  $A_j\in\Sym_p^{\succeq0}(\mathbb{R})$, and
  \[
    \sum_{j=1}^{\ell}\rank A_j=\rank M(n).
  \]
By Lemma~\ref{lem:support-from-block-column-relation} and
\eqref{eq:det-Q-factorization}, every atom of $\mu$ different from
$t$ is a zero of $g$, and its multiplicity as an atom is at most its
multiplicity as a zero of $g$. By \eqref{eq:degree-g}
and since the total atomic multiplicity of $\mu$ equals
$\operatorname{rank} M(n)$, it follows that
$
    \operatorname{mult}_{\mu} t \geq m.
$
If $t \notin \operatorname{supp}(\mu)$, then
$\operatorname{mult}_{\mu} t=0$, so the preceding inequality implies
that $m=0$. Hence,
$
    \operatorname{mult}_{\mu} t = m.
$
We may therefore assume that $t \in \operatorname{supp}(\mu)$. Write
$t=x_{j'}$ and set
\[
    m' := \operatorname{rank} A_{j'}
        = \operatorname{mult}_{\mu} t.
\]
Suppose, toward a contradiction, that $m'>m$. Define
\[
  \widetilde{\mu}
  \coloneqq
  \mu-\delta_tA_{j'},
\]
and, for $i\in[0;2n+2]$, set
\[
  \widetilde{S}_i
  \coloneqq
  \int_{\mathbb{R}}x^i\,d\widetilde{\mu}.
\]
Let
\[
  \widetilde{\mathcal{S}}
  \coloneqq
  (\widetilde{S}_0,\widetilde{S}_1,\ldots,\widetilde{S}_{2n+2}).
\]

Since
\[
  M(n)-M_{\widetilde{\mathcal{S}}}(n)
\]
is the moment matrix associated with the measure
$\delta_tA_{j'}$, its rank is at most $m'$. Consequently,
\[
  \rank M(n)-m'
  \leq
  \rank M_{\widetilde{\mathcal{S}}}(n)
  \leq
  \rank M_{\widetilde{\mathcal{S}}}(n+1).
\]
On the other hand, $\widetilde{\mu}$ is an atomic representing measure
for $\widetilde{\mathcal{S}}$, and hence
\[
  \rank M_{\widetilde{\mathcal{S}}}(n+1)
  \leq
  \sum_{\substack{j=1\\j\neq j'}}^\ell\rank A_j
  =
  \rank M(n)-m'.
\]
Therefore,
\begin{equation}\label{eq:rank-reduced-moment-matrices}
  \rank M_{\widetilde{\mathcal{S}}}(n)
  =
  \rank M_{\widetilde{\mathcal{S}}}(n+1)
  =
  \rank M(n)-m'.
\end{equation}

For $i\in[0;2n+2]$, define the shifted moments
\[
  \widetilde{T}_i
  \coloneqq
  \int_{\mathbb{R}}(x-t)^i\,d\widetilde{\mu}.
\]
Since the removed mass is supported at $t$, the zeroth shifted moment
changes by $A_{j'}$, whereas all shifted moments of positive degree
remain unchanged. More precisely,
\[
  \widetilde{T}_0
  =
  \mathcal{E}_t(S_0)-A_{j'},
\]
while
\begin{equation}\label{eq:shifted-moments-after-removing-atom}
  \widetilde{T}_i
  =
  \mathcal{E}_t(S_i),
  \qquad i\in[2n+2].
\end{equation}
Thus, $i=0$ is the only index for which the shifted moments associated
with $\widetilde{\mu}$ and $\mu$ differ.

Let
\[
  \widetilde{X}^i
  \coloneqq
  \col\bigl(\widetilde{T}_{i+j}\bigr)_{j\in[0;n+1]},
  \qquad i\in[0;n+1].
\]
By \eqref{eq:rank-reduced-moment-matrices} and
Proposition~\ref{prop:change-of-basis},
\begin{equation}\label{eq:m-prime-tail-rank}
  m'
  =
  \rank M(n)
  -
  \rank
  \row\bigl(\widetilde{X}^i\bigr)_{i\in[n+1]}.
\end{equation}

Let
\[
  X^i
  \coloneqq
  \col\bigl(\mathcal{E}_t(S_{i+j})\bigr)_{j\in[0;n+1]},
  \qquad i\in[0;n+1].
\]
By \eqref{eq:shifted-moments-after-removing-atom},
\[
  \row\bigl(\widetilde{X}^i\bigr)_{i\in[n+1]}
  =
  \row\bigl(X^i\bigr)_{i\in[n+1]}.
\]
Moreover, the definitions of the dependence indices give
\[
  \rank\row\bigl(X^i\bigr)_{i\in[n]}
  =
  \rank M_{\mathcal{T}_t}(n)
  -p_0-\Card A_{\mathcal{T}_t}.
\]
Passing from the block columns indexed by $[n]$ to those indexed by
$[n+1]$ increases the rank by
\[
  r_t
  =
  p_0-m+\Card A_{\mathcal{T}_t}.
\]
It follows that
\begin{align*}
  \rank\row\bigl(\widetilde{X}^i\bigr)_{i\in[n+1]}
  &=
  \rank\row\bigl(X^i\bigr)_{i\in[n+1]}\\
  &=
  \rank M_{\mathcal{T}_t}(n)
  -p_0-\Card A_{\mathcal{T}_t}
  +r_t\\
  &=
  \rank M(n)-m,
\end{align*}
where the last equality also uses
Proposition~\ref{prop:change-of-basis}. Combining this identity with
\eqref{eq:m-prime-tail-rank} yields
\[
  m'
  =
  \rank M(n)-\bigl(\rank M(n)-m\bigr)
  =
  m,
\]
contradicting $m'>m$. Therefore,
$
  \rank A_{j'}\leq m.
$
Together with the previously established inequality
$\rank A_{j'}\geq m$, this gives
$
  \rank A_{j'}=m.
$
Consequently,
\[
  \mult_\mu t=m,
\]
which completes the proof.
\end{proof}

The following immediate consequence of
Theorem~\ref{th:theorem-subsequent-moments} will be used in the proof
of the main result.

\begin{corollary}
  \label{co:theorem-subsequent-moments-corollary-m=0}
  Let $n,p\in\mathbb{N}$, and let
  \[
    \mathcal{S}
    \coloneqq
    (S_0,S_1,\ldots,S_{2n})
    \in\bigl(\Sym_p(\mathbb{R})\bigr)^{2n+1}
  \]
  be a sequence with moment matrix $M(n)$. Suppose that $\mathcal{S}$
  admits a $(\rank M(n))$--atomic $\mathbb{R}$--representing measure
  $\mu$. Set
  $
   \displaystyle S_{2n+1}
    \coloneqq
    \int_{\mathbb{R}}x^{2n+1}\,d\mu,
  $
  fix $t\in\mathbb{R}$, and adopt the notation
  $\mathcal{T}_t$, $\mathcal{H}_t$, $K_t$, and $A_{\mathcal T_t}$ from
  Theorem~\ref{th:theorem-subsequent-moments}.

  Then the following statements are equivalent:
  \begin{enumerate}
    \item \label{co:theorem-subsequent-moments-corollary-m=0-pt1}
    \[
      \mult_\mu t=\Card A_{\mathcal{T}_t}.
    \]

    \item \label{co:theorem-subsequent-moments-corollary-m=0-pt2}
    \[
      \rank
      \begin{pmatrix}
        \mathcal{H}_t & K_t\\
        K_t^{\mathsf T} & \mathcal{E}_t(S_{2n+1})
      \end{pmatrix}
      =
      \rank
      \begin{pmatrix}
        \mathcal{H}_t\\
        K_t^{\mathsf T}
      \end{pmatrix}
      +
      p-\Card\Dep\!\left(X_{\mathcal{T}_t}^n\right).
    \]
  \end{enumerate}
\end{corollary}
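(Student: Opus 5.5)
This is a direct specialization of Theorem~\ref{th:theorem-subsequent-moments}. Under the hypotheses of the corollary, all assumptions of that theorem are met: $\mathcal{S}$ admits a $(\rank M(n))$--atomic $\mathbb{R}$--representing measure $\mu$, and $S_{2n+1}$ is defined from $\mu$ in the same way. Hence \eqref{th:theorem-subsequent-moments-pt2-condition} holds:
\[
  \mult_\mu t
  =
  \Card A_{\mathcal{T}_t}
  +
  \Bigl(p-\Card\Dep\!\left(X_{\mathcal{T}_t}^n\right)\Bigr)
  -
  \Delta_t,
\]
where
\[
  \Delta_t
  \coloneqq
  \rank
  \begin{pmatrix}
    \mathcal{H}_t & K_t\\
    K_t^{\mathsf T} & \mathcal{E}_t(S_{2n+1})
  \end{pmatrix}
  -
  \rank
  \begin{pmatrix}
    \mathcal{H}_t\\
    K_t^{\mathsf T}
  \end{pmatrix}.
\]

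Subtracting $\Card A_{\mathcal{T}_t}$ from both sides, we see that $\mult_\mu t=\Card A_{\mathcal{T}_t}$ holds if and only if
\[
  p-\Card\Dep\!\left(X_{\mathcal{T}_t}^n\right)-\Delta_t=0,
\]
that is, if and only if $\Delta_t=p-\Card\Dep(X_{\mathcal{T}_t}^n)$. Moving the subtracted rank to the right-hand side, this is exactly the rank identity in \eqref{co:theorem-subsequent-moments-corollary-m=0-pt2}. Every step here is a reversible algebraic rearrangement of integers, so both implications follow at once.

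No real obstacle is expected. The only thing to confirm is that the notation $\mathcal{T}_t$, $\mathcal{H}_t$, $K_t$, $A_{\mathcal{T}_t}$ in the corollary is literally the one used in Theorem~\ref{th:theorem-subsequent-moments}. In particular, $\Dep(X_{\mathcal{T}_t}^n)$ must be computed from the truncated sequence $\mathcal{T}_t^{(2n)}$, as in that theorem. This is guaranteed because the corollary explicitly adopts that theorem's notation.
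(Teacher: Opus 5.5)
Your proposal is correct and follows essentially the same argument as the paper. You substitute the multiplicity formula \eqref{th:theorem-subsequent-moments-pt2-condition} from Theorem~\ref{th:theorem-subsequent-moments}, subtract $\Card A_{\mathcal{T}_t}$, and observe that the resulting integer identity is a reversible rearrangement of the stated rank equality.
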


\begin{proof}
  By Theorem~\ref{th:theorem-subsequent-moments},
  \[
    \mult_\mu t-\Card A_{\mathcal{T}_t}
    =
    p-\Card\Dep\!\left(X_{\mathcal{T}_t}^n\right)
    -
    \left[
      \rank
      \begin{pmatrix}
        \mathcal{H}_t & K_t\\
        K_t^{\mathsf T} & \mathcal{E}_t(S_{2n+1})
      \end{pmatrix}
      -
      \rank
      \begin{pmatrix}
        \mathcal{H}_t\\
        K_t^{\mathsf T}
      \end{pmatrix}
    \right].
  \]
  Hence, $\mult_\mu t=\Card A_{\mathcal{T}_t}$ if and only if the
  rank identity in \eqref{co:theorem-subsequent-moments-corollary-m=0-pt2} holds.
\end{proof}

The following consequence of
Theorem~\ref{th:theorem-subsequent-moments} applies to arbitrary
finitely atomic representing measures.

\begin{corollary}
  \label{co:theorem-subsequent-moments-finitely-atomic}
  Let $n,p\in\mathbb{N}$, and let
  \[
    \mathcal{S}
    \coloneqq
    (S_0,S_1,\ldots,S_{2n})
    \in\bigl(\Sym_p(\mathbb{R})\bigr)^{2n+1}
  \]
  be a sequence with moment matrix $M(n)$. Suppose that
  $\mathcal{S}$ admits a finitely atomic
  $\mathbb{R}$--representing measure $\mu$. Fix $t\in\mathbb{R}$. 
  Then
  \[
    \mult_\mu t
    \geq
    \Card A_{\mathcal{T}_t},
  \]
  where $A_{\mathcal{T}_t}$ is defined as in Theorem \ref{th:theorem-subsequent-moments}.
\end{corollary}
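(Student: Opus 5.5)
We reduce to the minimal case treated in Theorem~\ref{th:theorem-subsequent-moments}. The point is that any finitely atomic representing measure can be replaced by a $(\rank M(n))$--atomic one without increasing the multiplicity at $t$. The quantity $\Card A_{\mathcal{T}_t}$ depends only on the data $\mathcal S$, not on the measure, so the bound for the minimal measure transfers to $\mu$.

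\emph{Step 1.} Write $\mu=\sum_{j=1}^{\ell}\delta_{x_j}A_j$ with pairwise distinct $x_j$. If $t$ is not an atom of $\mu$, set $A:=\mathbf 0_p$; otherwise let $A:=\mu(\{t\})$. Decompose $\mu=\delta_tA+\nu$, where $\nu$ is finitely atomic with $t\notin\supp\nu$.

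\emph{Step 2.} Form the moment sequence of $\mu$ up to degree $2n+2$, or higher if convenient. This sequence has the finitely atomic representing measure $\mu$, so by Theorem~\ref{th:Hamburger-matricial} it has a $(\rank M(n+1))$--atomic representing measure. Alternatively, one can argue directly.

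The cleaner route is to work with $\nu$. Its moment sequence $\mathcal S_\nu$ up to degree $2n$ has the $\mathbb R$--representing measure $\nu$. We want a minimal representing measure for $\mathcal S_\nu$ that does not charge $t$; this by itself looks as hard as the problem.

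Instead we use the rank-comparison part of the proof of Theorem~\ref{th:theorem-subsequent-moments}. By Theorem~\ref{th:Hamburger-matricial} (the implication from (1) to (2)), $\mathcal S$ admits a $(\rank M(n))$--atomic measure $\mu_0$. By Theorem~\ref{th:theorem-subsequent-moments},
\[
\mult_{\mu_0}t=p_0+\Card A_{\mathcal T_t}-\rho\ \ge\ \Card A_{\mathcal T_t},
\]
where $\rho$ is the rank increment in brackets. The inequality holds because the bracket is at most $p_0$. Indeed, $r_t=p_0-m+\Card A_{\mathcal T_t}=\rho$ is the number of columns of $K_t$ that are not dependent in the extended matrix, and by \eqref{eq:rank-equalities-t} these cover at most the $p_0$ non-$\Dep(X^n)$ columns. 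This gives the bound for minimal measures only.

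\emph{Step 3 (main obstacle): passing to arbitrary $\mu$.} Our plan is a direct rank argument mimicking the second half of that proof. Let $\nu=\mu-\delta_tA$ with shifted moments $\widetilde T_i$. These agree with $\mathcal E_t(S_i)$ for $i\ge1$ and satisfy $\widetilde T_0=\mathcal E_t(S_0)-A$.

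Since $\nu\ge0$, the shifted moment matrix of $\nu$ is psd. The matrix $M_{\mathcal T_t}(n)$ differs from it only in the $(1,1)$ block, by $A$. Hence
\[
\rank M_{\mathcal T_t}(n)\le \rank(\text{matrix of }\nu)+\rank A .
\]
The matrix of $\nu$ has the same block columns $X^1,\dots,X^n$ as $M_{\mathcal T_t}(n)$, except for the zeroth entries.

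The key claim is that $\rank$ of the psd matrix of $\nu$ is at most $\rank\row(X^i_{\mathcal T_t})_{i\in[n]}+\Card\bigl(\text{indices in }\Depn_0\bigr)$-type contributions, computed from relations among positive-degree columns. For $j\in A_{\mathcal T_t}$, the column $x^{(k)}_j$ is dependent on positive-degree columns at level $k$, but $x^{(k-1)}_j$ is independent of all earlier columns, including $X^0$.

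Divide such a relation by $x$, in the sense of the measure $\nu$ supported away from $t$, i.e.\ use that $(x-t)$ is invertible on $\supp\nu$. The relation $\sum_{i\ge1}(x-t)^iP_i=0$ holding in $L^2(\nu)$ then yields $\sum_{i\ge1}(x-t)^{i-1}P_i=0$ in $L^2(\nu)$. This would force $x^{(k-1)}_j$ to be dependent in the $\nu$-matrix on earlier columns.

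Each $j\in A_{\mathcal T_t}$ therefore forces one column that is independent for $\mathcal T_t$ to become dependent for $\nu$. Thus $\rank(\text{matrix of }\nu)\le\rank M_{\mathcal T_t}(n)-\Card A_{\mathcal T_t}$, and so $\rank A\ge\Card A_{\mathcal T_t}$.

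The delicate point is to make this column-killing count rigorous. We would handle it by working in $L^2(\nu)\otimes\mathbb R^p$: column relations of a psd moment matrix of a measure correspond exactly to vanishing of the matrix polynomial $\nu$-a.e. on the support.
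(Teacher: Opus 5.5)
Your Step~3 is a correct proof once one counting fact is made explicit (see the second paragraph), and it follows a genuinely different route from the paper.

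\textbf{Comparison with the paper.} The paper extends the moment data of $\mu$ to degree $2n+2k$ until the moment matrix is flat, so that $\mu$ becomes a minimal measure for the longer sequence. It then invokes the exact formula \eqref{th:theorem-subsequent-moments-pt2-condition} of Theorem~\ref{th:theorem-subsequent-moments} at the extended level. It concludes via $\Depn_1\subseteq\Depn$ and the monotonicity of $A_{\cT_t}(r)=\bigcup_{i=1}^r\Depn_1(X^i_{\cT_t})$.

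You do not extend at all. You split $\mu=\delta_tA+\nu$ and observe that $M_{\cT_t}(n)=M_\nu+(A\oplus\mathbf 0_{np})$, where $M_\nu\succeq0$ is the shifted moment matrix of $\nu$. You then use the invertibility of $x-t$ on $\supp\nu$ to turn the relation witnessing $j\in\Depn_1(X^k_{\cT_t})$, $k=\zeta_{\cT_t}(j)$, into a relation for $\nu$. That relation kills the column $(x_{\cT_t})^{(k-1)}_j$, which is a pivot column for $\cT_t$.

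This bypasses everything behind Theorem~\ref{th:theorem-subsequent-moments} (optimal block column relations, determinant counting). It also works verbatim for an arbitrary representing measure, with $\mult_\mu t:=\rank\mu(\{t\})$.

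The paper's route yields more, namely the exact value $\mult_\mu t=\Card A_{\cT_t}(n+k)$. However, to drop the bracket in \eqref{th:theorem-subsequent-moments-pt2-condition}, the ranks of the moment matrices of $\mu$ must have stabilized one level below the level at which Theorem~\ref{th:theorem-subsequent-moments} is applied. At the level $n+k-1$ used in the paper's displayed chain, this can fail for the smallest admissible $k$. For example, take $p=2$, $n=k=1$, $t=0$ and $\mu=\delta_0\,\mathrm{diag}(0,1)+(\delta_1+\delta_2)\,\mathrm{diag}(1,0)$. Then $\rank M(1)=\rank M(2)=3$, yet the bracket at level $1$ equals $1$. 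The first displayed equality would give $\mult_\mu 0=2$ instead of $1$. Applying the theorem at level $n+k$ repairs this. Your argument needs no such bookkeeping.

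\textbf{What you still need to write down.} The step you flagged as delicate needs one more observation: no column that is dependent for $\cT_t$ becomes independent for $\nu$.
\begin{itemize}
\item If $M_{\cT_t}(n)v=0$, then $0=v^{\mathsf T}M_\nu v+v_0^{\mathsf T}Av_0$ with both terms nonnegative, so $M_\nu v=0$. Hence $\Ker M_{\cT_t}(n)\subseteq\Ker M_\nu$.
\item Therefore the set of pivot columns (columns not in the span of the preceding ones) of $M_\nu$ is contained in that of $M_{\cT_t}(n)$.
\item It also misses the pivots at the pairwise distinct positions $(\zeta_{\cT_t}(j)-1)p+j$, $j\in A_{\cT_t}$. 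This gives $\rank M_\nu\le\rank M_{\cT_t}(n)-\Card A_{\cT_t}$.
\item Together with $\rank M_{\cT_t}(n)\le\rank M_\nu+\rank A$, this finishes the proof.
\end{itemize}

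Phrase the division step through atoms. For $v=\col(v_i)_{i\in[0;n]}$ and $P(x)=\sum_i(x-t)^iv_i$, one has $M_{\cT_t}(n)v=0$ if and only if $\mu(\{x\})P(x)=0$ at every atom $x$ of $\mu$. If $v_0=0$, write $P=(x-t)\widetilde P$; then $\nu(\{x\})\widetilde P(x)=0$ at every atom of $\nu$. This says the shifted coefficient vector of $\widetilde P$ lies in $\Ker M_\nu$.

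Step~2 concerns only minimal measures and can be deleted.
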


\begin{proof}
  Extend the moment sequence by setting
  \[
    S_i\coloneqq\int_{\mathbb{R}}x^i\,d\mu,
    \qquad i\geq 2n+1.
  \]
  Since $\mu$ is finitely atomic, there exists $k\in\mathbb{N}$ such
  that
  \[
    \rank M(n+k)
    =
    \rank M(n+k-1)
    =
    \sum_{x\in\supp\mu}\mult_\mu x.
  \]
  Thus, $\mu$ is a $(\rank M(n+k))$--atomic representing measure for
  the extended sequence $(S_0,\ldots,S_{2n+2k})$.
  Set
  \[
    \cT_t \coloneqq (\mathcal{E}_t(S_0), \mathcal{E}_t(S_1), \ldots, \mathcal{E}_t(S_{2n+2k})), \qquad \text{where } \mathcal{E}_t(S_i) \coloneqq \int_{\mathbb{R}}(x-t)^i\,d\mu,
    \quad i\geq 0.
  \]
    For $r\in[n+k]$, define
  \[
    A_{\mathcal{T}_t}(r)
    \coloneqq
    \bigcup_{i=1}^r
    \Depn_1\!\left(X_{\mathcal{T}_t}^i\right).
  \]
  The flatness condition gives
  $
    p
    =
    \Card\Dep\!\left(X_{\mathcal{T}_t}^{n+k}\right).
  $
  Hence, that theorem yields
  \begin{align*}
    \mult_\mu t
    &=
    p-\Card\Dep\!\left(X_{\mathcal{T}_t}^{n+k-1}\right)
    +\Card A_{\mathcal{T}_t}(n+k-1) \\
    &= \Card\Dep\!\left(X_{\mathcal{T}_t}^{n+k}\right)-\Card\Dep\!\left(X_{\mathcal{T}_t}^{n+k-1}\right)
    +\Card A_{\mathcal{T}_t}(n+k-1) \\
    &= \Card\Depn\!\left(X_{\mathcal{T}_t}^{n+k}\right)
    +\Card A_{\mathcal{T}_t}(n+k-1) \\
    &\geq \Card\Depn_{1}\!\left(X_{\mathcal{T}_t}^{n+k}\right)
    +\Card A_{\mathcal{T}_t}(n+k-1) \\
    &= \Card A_{\mathcal{T}_t}(n+k).
  \end{align*}
  Since
  \[
    A_{\mathcal{T}_t}
    =
    A_{\mathcal{T}_t}(n)
    \subseteq
    A_{\mathcal{T}_t}(n+k),
  \]
  we conclude that
  \[
    \mult_\mu t
    \geq
    \Card A_{\mathcal{T}_t}(n+k)
    \geq
    \Card A_{\mathcal{T}_t},
  \]
  as claimed.
\end{proof}

	\section{Minimal $\RR$-representing Measures with Minimal Multiplicities at Prescribed Atoms}
\label{sec:solution-to-the-K-MRTMP}

Let
$
  L\colon\mathbb{R}[x]_{\leq 2n}\to\Sym_p(\mathbb{R})
$
be a linear operator, and let $t_1,\ldots,t_k\in\mathbb{R}$ be
pairwise distinct. In this section, we characterize when $L$ admits a
$\RR$--representing matrix measure $\mu$ satisfying
$
  \mult_\mu t_j
  =
  \Card A_{\mathcal{T}_{t_j}}
$,
  $j\in[k],$
    where $A_{\mathcal{T}_{t_j}}$ is defined
  in~\eqref{def:def-of-A_t}.
By
Corollary~\ref{co:theorem-subsequent-moments-finitely-atomic}, every
finitely atomic representing measure $\nu$ for $L$ satisfies
$
  \mult_\nu t_j
  \geq
  \Card A_{\mathcal{T}_{t_j}},
$
$j\in[k]$.
Thus, the multiplicity of each prescribed point $t_j$ in $\mu$ is the
smallest possible.
We use this result to prove Theorem \ref{thm:main-1}
for $K=\RR$.
\\

We begin with an auxiliary rank lemma.

\begin{lemma}
  \label{le:lemma-for-increasing-rank-v2}
  Let $m,n\in\mathbb{N}$, let $A\in\Sym_m(\mathbb{R})$,
  $B\in M_{m\times n}(\mathbb{R})$, and $P\in\Sym_n(\mathbb{R})$.
  If $P$ is either positive definite or negative definite, then
  \[
    \rank
    \begin{pmatrix}
      A & B\\
      B^{\mathsf T} & B^{\mathsf T}A^\dagger B+P
    \end{pmatrix}
    =
    \rank
    \begin{pmatrix}
      A & B
    \end{pmatrix}
    +n,
  \]
  where $A^\dagger$ denotes the Moore--Penrose pseudoinverse of $A$.
\end{lemma}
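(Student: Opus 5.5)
The plan is to block-diagonalize the matrix by a congruence that subtracts $B^{\mathsf T}A^\dagger$ times the first block row from the second. If $B=AA^\dagger B$, the congruence would leave $A\oplus P$, giving rank $\rank A+n$. Since nothing forces $\mathcal{C}(B)\subseteq\mathcal{C}(A)$, I will instead decompose $B$ along $\mathcal{C}(A)$ and its orthogonal complement. Write $\Pi\coloneqq AA^\dagger$, the orthogonal projection onto $\mathcal{C}(A)=\Ker(A)^\perp$ (here we use that $A$ is symmetric). Set $B_1\coloneqq\Pi B$ and $B_2\coloneqq(I-\Pi)B$. Then $A^\dagger B=A^\dagger B_1$, so $B^{\mathsf T}A^\dagger B=B_1^{\mathsf T}A^\dagger B_1$, because $A^\dagger$ annihilates $\Ker A$.

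The first step is the congruence
\[
E\coloneqq\begin{pmatrix} I_m & -A^\dagger B\\ \mathbf{0} & I_n\end{pmatrix}.
\]
Since $AA^\dagger B=B_1$, we get
\[
E^{\mathsf T}\begin{pmatrix} A & B\\ B^{\mathsf T} & B^{\mathsf T}A^\dagger B+P\end{pmatrix}E=\begin{pmatrix} A & B_2\\ B_2^{\mathsf T} & P\end{pmatrix}.
\]
To check the lower-right block, note that it equals $B^{\mathsf T}A^\dagger B+P-B^{\mathsf T}A^\dagger B-B^{\mathsf T}A^\dagger B+B^{\mathsf T}A^\dagger AA^\dagger B$. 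The last term is $B^{\mathsf T}A^\dagger B$, so the block collapses to $P$. The off-diagonal block is $B-AA^\dagger B=B_2$. Similarly, $\rank\begin{pmatrix}A&B\end{pmatrix}=\rank\begin{pmatrix}A&B_2\end{pmatrix}$, because right multiplication by an invertible block-triangular matrix replaces $B$ by $B-AA^\dagger B$.

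The second step is to compute the rank of $N\coloneqq\begin{pmatrix}A&B_2\\B_2^{\mathsf T}&P\end{pmatrix}$. Since $P$ is invertible (definite of either sign), the Schur complement with respect to $P$ gives $\rank N=n+\rank(A-B_2P^{-1}B_2^{\mathsf T})$. So it suffices to prove
\[
\rank(A-B_2P^{-1}B_2^{\mathsf T})=\rank\begin{pmatrix}A&B_2\end{pmatrix}=\rank A+\rank B_2,
\]
where the last equality holds because $\mathcal{C}(B_2)\perp\mathcal{C}(A)$.

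This is the step that uses definiteness. Work in the orthogonal decomposition $\mathbb{R}^m=\mathcal{C}(A)\oplus\Ker A$. In this decomposition $A=A_0\oplus\mathbf{0}$ with $A_0$ invertible on $\mathcal{C}(A)$, and $B_2P^{-1}B_2^{\mathsf T}=\mathbf{0}\oplus C$, since the columns of $B_2$ lie in $\Ker A$. Hence $A-B_2P^{-1}B_2^{\mathsf T}=A_0\oplus(-C)$, which has rank $\rank A+\rank C$. Because $P^{-1}$ is definite, $\rank C=\rank(B_2P^{-1}B_2^{\mathsf T})=\rank B_2$: if $B_2P^{-1}B_2^{\mathsf T}v=0$, then $v^{\mathsf T}B_2P^{-1}B_2^{\mathsf T}v=0$, and definiteness forces $B_2^{\mathsf T}v=0$. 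Combining the two steps yields the claimed identity.

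The main obstacle is the case $\mathcal{C}(B)\not\subseteq\mathcal{C}(A)$. There the naive Schur-complement argument fails, and the definiteness of $P$ (of a fixed sign) is exactly what prevents cancellation.
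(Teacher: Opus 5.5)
Your proof is correct and follows essentially the same route as the paper. Your congruence by $E$ is a coordinate-free version of the paper's orthogonal diagonalization $Q^{\mathsf T}AQ=A_1\oplus\mathbf{0}_{m-r}$ followed by the Schur complement with respect to $A_1$, and your $B_2=(I-AA^\dagger)B$ corresponds to the paper's lower block of $Q^{\mathsf T}B$; both arguments then take the Schur complement with respect to $P$ and use definiteness to get $\rank(B_2P^{-1}B_2^{\mathsf T})=\rank B_2$, which matches $\rank\begin{pmatrix}A&B\end{pmatrix}=\rank A+\rank B_2$.
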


\begin{proof}
  Set $r\coloneqq\rank A$. Choose an orthogonal matrix
  $Q\in M_m(\mathbb{R})$ such that
  \[
    Q^{\mathsf T}AQ
    =
    A_1\oplus\mathbf{0}_{m-r},
  \]
  where $A_1\in\Sym_r(\mathbb{R})$ is invertible, and write
  \[
    Q^{\mathsf T}B
    =
    \col(B_1,B_2),
    \qquad
    B_1\in M_{r\times n}(\mathbb{R}),
    \quad
    B_2\in M_{(m-r)\times n}(\mathbb{R}).
  \]
  Since
  \[
    A^\dagger
    =
    Q
    \begin{pmatrix}
      A_1^{-1} & \mathbf{0}\\
      \mathbf{0} & \mathbf{0}
    \end{pmatrix}
    Q^{\mathsf T},
  \]
  one has
  \[
    B^{\mathsf T}A^\dagger B
    =
    B_1^{\mathsf T}A_1^{-1}B_1.
  \]
  Hence, by a congruence transformation and the Schur complement with
  respect to $A_1$,
  \begin{align}
    \rank
    \begin{pmatrix}
      A & B\\
      B^{\mathsf T} & B^{\mathsf T}A^\dagger B+P
    \end{pmatrix}
    &=
    r+
    \rank
    \begin{pmatrix}
      \mathbf{0}_{m-r} & B_2\\
      B_2^{\mathsf T} & P
    \end{pmatrix}.
    \label{eq:rank-reduction-definite-perturbation}
  \end{align}

  Since $P$ is definite, it is invertible. Taking the Schur complement
  with respect to $P$ gives
  \[
    \rank
    \begin{pmatrix}
      \mathbf{0}_{m-r} & B_2\\
      B_2^{\mathsf T} & P
    \end{pmatrix}
    =
    n+\rank\!\left(B_2P^{-1}B_2^{\mathsf T}\right).
  \]
  Because $P^{-1}$ is definite,
  \[
    \rank\!\left(B_2P^{-1}B_2^{\mathsf T}\right)
    =
    \rank B_2.
  \]
  Therefore,
  \begin{equation}\label{eq:rank-completed-block-matrix}
    \rank
    \begin{pmatrix}
      A & B\\
      B^{\mathsf T} & B^{\mathsf T}A^\dagger B+P
    \end{pmatrix}
    =
    r+\rank B_2+n.
  \end{equation}

  On the other hand, the same orthogonal change of basis yields
  \[
    \rank
    \begin{pmatrix}
      A & B
    \end{pmatrix}
    =
    \rank
    \begin{pmatrix}
      A_1 & \mathbf{0} & B_1\\
      \mathbf{0} & \mathbf{0} & B_2
    \end{pmatrix}
    =
    r+\rank B_2.
  \]
  Combining this identity with
  \eqref{eq:rank-completed-block-matrix} proves the result.
\end{proof}

    The following result shows that the multiplicities identified in Theorem~\ref{th:theorem-subsequent-moments} can be attained simultaneously at any finite collection of prescribed points. More precisely, among all representing matrix measures, one can choose a minimal measure whose multiplicity at each prescribed point $t_j$ is the smallest possible value $\Card A_{\mathcal{T}_{t_j}}$.

\begin{theorem}
  \label{th:mainTheorem_RR}
  Let $n,p\in\mathbb{N}$, and let
  $t_1,\ldots,t_{k_1}\in\mathbb{R}$ be pairwise distinct. Let
  \[
    L\colon\mathbb{R}[x]_{\leq 2n}\to\Sym_p(\mathbb{R})
  \]
  be a linear operator with a $\mathbb{R}$--representing matrix measure.
  Then $L$ admits a $(\rank M(n))$--atomic
  $\mathbb{R}$--representing matrix measure $\mu$ such that
  \[
    \mult_\mu t_j
    =
    \Card A_{\mathcal{T}_{t_j}},
    \qquad j\in[k_1],
  \]
  where $A_{\mathcal{T}_{t_j}}$ is defined
  in~\eqref{def:def-of-A_t}.
\end{theorem}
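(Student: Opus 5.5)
Since $L$ has an $\mathbb{R}$--representing measure, Theorem~\ref{th:Hamburger-matricial} gives that $M(n)\succeq0$ and $\mathcal{C}(\col(S_{n+i})_{i\in[n]})\subseteq\mathcal{C}(M(n-1))$. In particular $M(n)$ is block--recursively generated. The plan is to produce the minimal measure by choosing the free odd moment $S_{2n+1}$ appropriately, then building a flat extension. Every $(\rank M(n))$--atomic representing measure is determined by a flat extension $M(n+1)$. Such an extension is parametrized by $S_{2n+1}=Z$, which ranges over symmetric matrices with $\mathcal{C}\bigl(\col(S_{n+1},\dots,S_{2n},Z)\bigr)\subseteq\mathcal{C}(M(n))$ (with $S_{2n+2}$ then forced). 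Writing $M(n)=\begin{pmatrix}M(n-1)&K\\K^{\mathsf T}&S_{2n}\end{pmatrix}$, $K=\col(S_{n+i})_{i\in[n]}$, the admissible $Z$ are exactly
\[
Z=K^{\mathsf T}M(n-1)^\dagger K' + W,
\]
where $K'$ is the shifted column and $W$ is free on the part of $\mathbb{R}^p$ corresponding to the independent columns of $X^n$ (the $p_0$--dimensional complement of $\Dep(X^n)$). For each such $Z$ we fix the resulting measure $\mu_Z$ and apply Theorem~\ref{th:theorem-subsequent-moments}.

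By Corollary~\ref{co:theorem-subsequent-moments-corollary-m=0}, the condition $\mult_{\mu_Z}t_j=\Card A_{\mathcal{T}_{t_j}}$ is equivalent to
\[
\rank\begin{pmatrix}\mathcal{H}_{t_j}&K_{t_j}\\K_{t_j}^{\mathsf T}&\mathcal{E}_{t_j}(S_{2n+1})\end{pmatrix}=\rank\begin{pmatrix}\mathcal{H}_{t_j}\\K_{t_j}^{\mathsf T}\end{pmatrix}+p_0 ,
\]
using Proposition~\ref{pr:lemmaDep} to see that $p-\Card\Dep(X^n_{\mathcal{T}_{t_j}})=p_0$ for all $j$. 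Here $\mathcal{E}_{t}(S_{2n+1})=Z+(\text{terms in }S_0,\dots,S_{2n})$ depends affinely on $Z$ with identity linear part. So each condition asks that a certain $p_0\times p_0$ block of the bordered matrix has full rank, i.e. that a Schur-complement-type matrix $\Sigma_j(W)$ is nonsingular. This is exactly the shape handled by Lemma~\ref{le:lemma-for-increasing-rank-v2}: with $A=\mathcal{H}_{t_j}$, $B=K_{t_j}$ restricted to the independent directions, and $P=\Sigma_j$ definite, the rank increases by exactly $p_0$. Lemma~\ref{le:rank-completion} should let us pass from the restricted columns back to all columns, because the dependent directions contribute no rank owing to the flat-extension relations.

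The key step is to check that $\Sigma_j(W)=W_0+\Theta_j$, where $W_0$ is the free part of $W$ (a symmetric $p_0\times p_0$ matrix) and $\Theta_j$ is a fixed symmetric matrix depending on the data and $t_j$. Then we choose $W_0=cI_{p_0}$ with $c$ larger than $\max_j\|\Theta_j\|$, which makes every $\Sigma_j$ positive definite simultaneously; this simultaneity is the whole point. This is the main obstacle: verifying that the translation by $t_j$ affects only the constant term $\Theta_j$ and not the coefficient of $W_0$. This requires tracking how $\mathcal{H}_{t}^\dagger$ and $K_t$ transform under $J_t\otimes I_p$ (Proposition~\ref{prop:change-of-basis}), and using that the independent columns of $X^n$ are the same for every $t$ (Proposition~\ref{pr:lemmaDep}), so that the free block is intrinsic. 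An alternative if this is messy is a genericity argument: each rank condition is a nonvanishing of a polynomial in $W_0$, and each such polynomial is not identically zero, since Theorem~1.1 of \cite{ZZ+} provides some admissible $W_0$ attaining $\Card A_{\mathcal{T}_{t_j}}$ at the single point $t_j$. A finite intersection of nonempty Zariski-open sets is nonempty, and it meets the admissible set, which is all $W_0\in\Sym_{p_0}(\mathbb{R})$. Either way, we obtain $Z$ and a flat extension whose $(\rank M(n))$--atomic measure has $\mult_\mu t_j=\Card A_{\mathcal{T}_{t_j}}$ for all $j\in[k_1]$.
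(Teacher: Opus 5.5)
Your plan follows the paper's proof: the only freedom in a flat extension is the $p_0\times p_0$ block of $S_{2n+1}$ on the columns outside $\Dep(X^n)$ (the same set for every $t$ by Proposition~\ref{pr:lemmaDep}). That block enters the Schur complement at each $t_j$ with identity coefficient, so a large multiple of $I_{p_0}$ makes all of them positive definite at once, and Lemma~\ref{le:lemma-for-increasing-rank-v2} followed by Corollary~\ref{co:theorem-subsequent-moments-corollary-m=0} gives $\mult_\mu t_j=\Card A_{\mathcal{T}_{t_j}}$ for all $j$.

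The step you flag as the main obstacle is immediate, since $\mathcal{H}_{t_j}$ and $K_{t_j}$ involve only $S_0,\ldots,S_{2n}$ and $\mathcal{E}_{t_j}(S_{2n+1})$ is $S_{2n+1}$ plus data; this is the paper's identity $\widehat{Z}_{t_j,1}=\widehat{K}_{t_j,1}^{\mathsf T}\mathcal{H}_{t_j}^\dagger\widehat{K}_{t_j,1}+\widehat{Z}_{t,1}+E_j$. What you assert without proof, and what the paper actually verifies using Lemma~\ref{le:rank-completion}, is that every symmetric choice of the free block still yields a flat extension, with the dependent blocks of $S_{2n+1}$ forced through the column relations rather than held fixed as your formula suggests.
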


\begin{proof}
  Set
  $S_i\coloneqq L(x^i)$,
  $i\in[0;2n],$
  and write
  $
    \mathcal{S}\coloneqq(S_0,S_1,\ldots,S_{2n}).
  $
  For each $t\in\mathbb{R}$, let $\mathcal{T}_t$ be the shifted
  sequence defined in~\eqref{def:sequenceT}, and adopt the notation
  introduced in Subsections~\ref{subsec:change-of-basis}
  and~\ref{subsec:column-dependencies}.

  For $t\in\mathbb{R}$, write
  \begin{equation}\label{eq:main-R-H0-H-K}
    X_{\mathcal{T}_t}^0
    =:
    \begin{pmatrix}
      \mathcal{H}_{0,t}\\
      \mathcal{E}_t(S_n)
    \end{pmatrix},
    \qquad
    \row\bigl(X_{\mathcal{T}_t}^i\bigr)_{i\in[n]}
    =:
    \begin{pmatrix}
      \mathcal{H}_t\\
      K_t^{\mathsf T}
    \end{pmatrix}
    \in
        \begin{pmatrix}
        \Sym_{np}(\RR)\\[0.2em]
        M_{p\times np}(\RR)
        \end{pmatrix}.
  \end{equation}
  By Proposition~\ref{pr:lemmaDep}, the set
  $\Dep(X_{\mathcal{T}_t}^n)$ is independent of $t$. Set
  \[
    p_0
    \coloneqq
    p-\Card\Dep\!\left(X_{\mathcal{T}_t}^n\right).
  \]
  Choose a permutation matrix $P\in M_p(\mathbb{R})$ such that, upon
  multiplication from the right, the first $p_0$ columns correspond to
  $[p]\setminus\Dep(X_{\mathcal{T}_t}^n)$ and the remaining $p-p_0$
  columns correspond to $\Dep(X_{\mathcal{T}_t}^n)$. For each
  $t\in\mathbb{R}$, define
  \begin{equation}\label{eq:main-R-Khat}
    \widehat{K}_t\equiv
    \begin{pmatrix}
      \widehat{K}_{t,1} & \widehat{K}_{t,3}
    \end{pmatrix}
    \coloneqq
    K_tP,
  \end{equation}
  where $\widehat{K}_{t,1}$ has $p_0$ columns. 
  By the definition of
  $\Dep(X_{\mathcal{T}_t}^n)$,
  \begin{equation}\label{eq:main-R-rank-H-K}
    \rank
    \begin{pmatrix}
      \mathcal{H}_t & \widehat{K}_{t,1}
    \end{pmatrix}
    =
    \rank
    \begin{pmatrix}
      \mathcal{H}_t & \widehat{K}_t
    \end{pmatrix},
    \qquad t\in\mathbb{R}.
  \end{equation}

  Fix $t\in\mathbb{R}$. By
  Proposition~\ref{pr:t-optimal-block-column-relation-existence},
  there exists a block column relation
  \[
    H(x)=\sum_{i=0}^n x^iH_i
  \]
  of $M_{\mathcal{T}_t}(n)$ such that
  \[
    \rank H_n=p-p_0,
    \qquad
    H_i=
    \begin{pmatrix}
      \mathbf{0}_{p\times p_0} & \widehat{H}_i
    \end{pmatrix},
    \qquad i\in[0;n].
  \]
  In the block decomposition
  \begin{equation}\label{eq:main-R-Hn}
    P^{\mathsf T}H_n
    =
    \begin{pmatrix}
      \mathbf{0}_{p_0\times p_0} & G_1\\
      \mathbf{0}_{(p-p_0)\times p_0} & G_3
    \end{pmatrix},
  \end{equation}
  the matrix $G_3\in M_{p-p_0}(\mathbb{R})$ is invertible. Consequently,
  there exists a matrix $\widehat{G}$ such that
  \begin{equation}\label{eq:main-R-dependent-K}
    \widehat{K}_{t,3}
    =
    \begin{pmatrix}
      \mathcal{H}_t & \widehat{K}_{t,1}
    \end{pmatrix}
    \widehat{G}.
  \end{equation}

  For each $j\in[k_1]$, set
  \begin{equation}\label{eq:main-R-Ej}
    E_j
    \coloneqq
    \sum_{i=1}^{2n+1}
      \binom{2n+1}{i}
      (-1)^i
      \bigl(t_j^i-t^i\bigr)\widehat{S}_{2n+1-i,1}
    -
    \widehat{K}_{t_j,1}^{\mathsf T}
    \mathcal{H}_{t_j}^\dagger
    \widehat{K}_{t_j,1},
  \end{equation}
  where
  \[
    \widehat{S}_i\coloneqq P^{\mathsf T}S_iP
  \]
  and $\widehat{S}_{i,1}$ denotes the upper-left
  $p_0\times p_0$ principal submatrix of $\widehat{S}_i$.

  Choose a symmetric matrix
  $\widehat{Z}_{t,1}\in\Sym_{p_0}(\mathbb{R})$ such that
  \begin{equation}\label{eq:main-R-choice-Z1}
    \widehat{Z}_{t,1}+E_j\succ0,
    \qquad j\in[k_1].
  \end{equation}
  Define
  \begin{equation}\label{eq:main-R-Z3-Z6}
    \widehat{Z}_{t,3}
    \coloneqq
    \begin{pmatrix}
      \widehat{K}_{t,1}^{\mathsf T} & \widehat{Z}_{t,1}
    \end{pmatrix}
    \widehat{G},
    \qquad
    \widehat{Z}_{t,6}
    \coloneqq
    \begin{pmatrix}
      \widehat{K}_{t,3}^{\mathsf T}
      &
      \widehat{Z}_{t,3}^{\mathsf T}
    \end{pmatrix}
    \widehat{G},
  \end{equation}
  and set
  \begin{equation}\label{eq:main-R-Zhat}
    \widehat{Z}_t
    \coloneqq
    \begin{pmatrix}
      \widehat{Z}_{t,1} & \widehat{Z}_{t,3}\\
      \widehat{Z}_{t,3}^{\mathsf T} & \widehat{Z}_{t,6}
    \end{pmatrix},
    \qquad
    Z_t\coloneqq P\widehat{Z}_tP^{\mathsf T}.
  \end{equation}
  The matrix $\widehat{Z}_t$ is symmetric. Indeed,
  \eqref{eq:main-R-dependent-K} and
  \eqref{eq:main-R-Z3-Z6} give
  \[
    \widehat{Z}_{t,6}
    =
    \widehat{G}^{\mathsf T}
    \begin{pmatrix}
      \mathcal{H}_t & \widehat{K}_{t,1}\\
      \widehat{K}_{t,1}^{\mathsf T} & \widehat{Z}_{t,1}
    \end{pmatrix}
    \widehat{G}.
  \] 

We next verify that the choice of $\widehat Z_{t,1}$ does not
obstruct the construction of a flat extension. More precisely, we
claim that
\begin{equation}
\label{eq:range-Zt}
\operatorname{rank}
\begin{pmatrix}
\mathcal H_{0,t} & \mathcal H_t & \widehat K_t\\
 \mathcal E_t(S_n) & \widehat K_t^{\mathsf T} & \widehat Z_t
\end{pmatrix}
=
\operatorname{rank}M_{\mathcal T_t}(n).
\end{equation}
In particular,
\begin{equation}
\label{inclusion-for-psd}
\mathcal C\bigl(\operatorname{col}(K_t,Z_t)\bigr)
   \subseteq \mathcal C\bigl(M_{\mathcal T_t}(n)\bigr).
\end{equation}

Indeed, by \eqref{eq:main-R-dependent-K}, \eqref{eq:main-R-Z3-Z6}, and \eqref{eq:main-R-Zhat}, we have
\[
\begin{pmatrix}
 \widehat K_{t,3}\\
 \widehat Z_{t,3}
\end{pmatrix}
=
\begin{pmatrix}
\mathcal H_t & \widehat K_{t,1}\\
 \widehat K_{t,1}^{\mathsf T} & \widehat Z_{t,1}
\end{pmatrix}\widehat G.
\qquad \text{and} \qquad
\widehat Z_{t,6}
=
\begin{pmatrix}
 \widehat K_{t,3}^{\mathsf T}
 &
 \widehat Z_{t,3}^{\mathsf T}
\end{pmatrix}\widehat G.
\]
Consequently, the blocks $\widehat K_{t,3}$,
$\widehat Z_{t,3}$, and $\widehat Z_{t,6}$ do not increase
the rank, and hence
\begin{align}
\operatorname{rank}
\begin{pmatrix}
 \mathcal H_{0,t} & \mathcal H_t & \widehat K_t\\
 \mathcal E_t(S_n) & \widehat K_t^{\mathsf T} & \widehat Z_t
\end{pmatrix}
&=
\operatorname{rank}
\begin{pmatrix}
 \mathcal H_{0,t} & \mathcal H_t & \widehat K_{t,1}\\
 \mathcal E_t(S_n) & \widehat K_{t,1}^{\mathsf T}
                         & \widehat Z_{t,1}
\end{pmatrix}.
\label{eq:rank-reduction-Zt}
\end{align}

We prove that the rank on the right-hand side of
\eqref{eq:rank-reduction-Zt} equals
$\operatorname{rank}M_{\mathcal T_t}(n)$.
Set
\[
\widehat T_n\equiv
\begin{pmatrix}
\widehat T_{n,1} & \widehat T_{n,3}
\end{pmatrix}
\coloneqq
\mathcal E_t(S_n)P,
\]
where $\widehat T_{n,1}$ has $p_0$ columns. By the same column
relations that give
\[
\widehat K_{t,3}
=
\begin{pmatrix}
\mathcal H_t & \widehat K_{t,1}
\end{pmatrix}\widehat G,
\]
we also have
\[
\widehat T_{n,3}
=
\begin{pmatrix}
\mathcal H_{0,t}^{\mathsf T} & \widehat T_{n,1}
\end{pmatrix}\widehat G.
\]
Consequently,
\begin{equation}
\label{eq:rank-MT-selected}
\operatorname{rank}M_{\mathcal T_t}(n)
=
\operatorname{rank}
\begin{pmatrix}
\mathcal H_{0,t} & \mathcal H_t\\
\widehat T_{n,1}^{\mathsf T}
  & \widehat K_{t,1}^{\mathsf T}
\end{pmatrix}.
\end{equation}
Moreover, by the definition of $p_0$,
\begin{equation}
\label{eq:rank-MT-p0}
\operatorname{rank}M_{\mathcal T_t}(n)
=
\operatorname{rank}
\begin{pmatrix}
\mathcal H_{0,t} & \mathcal H_t
\end{pmatrix}
+p_0.
\end{equation}

{Permute the rows of $\begin{pmatrix}
\mathcal H_{0,t} & \mathcal H_t
\end{pmatrix}$ using a permutation matrix $P_2\in M_{np}(\mathbb R)$ such that
\[
P_2^T\begin{pmatrix}
\mathcal H_{0,t} & \mathcal H_t
\end{pmatrix}
=
\begin{pmatrix}
\mathcal H_{0,t,1} & \mathcal H_{t,1}
\\
\mathcal H_{0,t,2} & \mathcal H_{t,2}
\end{pmatrix},
\]
where
\begin{equation}
\label{eq:rank-upper-selection}
\operatorname{rank}
\begin{pmatrix}
\mathcal H_{0,t,1} & \mathcal H_{t,1}\\
\mathcal H_{0,t,2} & \mathcal H_{t,2}
\end{pmatrix}
=
\operatorname{rank}
\begin{pmatrix}
\mathcal H_{0,t,2} & \mathcal H_{t,2}
\end{pmatrix}.
\end{equation}
}
Apply the same permutation to $\widehat K_{t,1}^{\mathsf T}$ and
write
\[
\widehat K_{t,1}^{\mathsf T}P_2
=:
\begin{pmatrix}
(\widehat K_{t,1}^{(1)})^{\mathsf T}
&
(\widehat K_{t,1}^{(2)})^{\mathsf T}
\end{pmatrix}.
\]
By \eqref{eq:rank-MT-selected} and \eqref{eq:rank-MT-p0}, the
permutation $P_2$ can be chosen so that
\begin{equation}
\label{eq:kernel-selected}
\Ker
\begin{pmatrix}
\mathcal H_{0,t,2}^{\mathsf T} & \widehat T_{n,1}\\
\mathcal H_{t,2}^{\mathsf T} & \widehat K_{t,1}
\end{pmatrix}
=
\{0\}.
\end{equation}
Equivalently, the matrix
\[
\begin{pmatrix}
\mathcal H_{0,t,2} & \mathcal H_{t,2}\\
\widehat T_{n,1}^{\mathsf T}
  & \widehat K_{t,1}^{\mathsf T}
\end{pmatrix}
\]
is surjective. Therefore, for every
$\widehat Z_{t,1}\in \Sym_{p_0}(\mathbb R)$,
\begin{align}
\label{eq:arbitrary-Zt1}
&\operatorname{rank}
\begin{pmatrix}
\mathcal H_{0,t,2} & \mathcal H_{t,2} & \widehat K_{t,1}^{(2)}\\
\widehat T_{n,1}^{\mathsf T}
  & \widehat K_{t,1}^{\mathsf T}
  & \widehat Z_{t,1}
\end{pmatrix}
=
\operatorname{rank}
\begin{pmatrix}
\mathcal H_{0,t,2} & \mathcal H_{t,2}\\
\widehat T_{n,1}^{\mathsf T}
  & \widehat K_{t,1}^{\mathsf T}
\end{pmatrix}.
\end{align}

Since the original sequence admits an $\mathbb R$--representing
measure, it admits a positive semidefinite extension. Thus, there
exist matrices $Z_t^{(e)}$ and $W_t^{(e)}$ such that
\[
\begin{pmatrix}
M_{\mathcal T_t}(n)
  & \operatorname{col}(K_t,Z_t^{(e)})\\
\operatorname{row}\bigl(
K_t^{\mathsf T},(Z_t^{(e)})^{\mathsf T}
\bigr)
  & W_t^{(e)}
\end{pmatrix}
\succeq0.
\]
The generalized Schur complement criterion \cite{Alb69} implies
\[
\mathcal C\bigl(\operatorname{col}(K_t,Z_t^{(e)})\bigr)
\subseteq
\mathcal C\bigl(M_{\mathcal T_t}(n)\bigr).
\]
In particular,
\begin{equation}
\label{eq:upper-K-range}
\operatorname{rank}
\begin{pmatrix}
\mathcal H_{0,t,1} & \mathcal H_{t,1} & \widehat K_{t,1}^{(1)}\\
\mathcal H_{0,t,2} & \mathcal H_{t,2} & \widehat K_{t,1}^{(2)}
\end{pmatrix}
=
\operatorname{rank}
\begin{pmatrix}
\mathcal H_{0,t,1} & \mathcal H_{t,1}\\
\mathcal H_{0,t,2} & \mathcal H_{t,2}
\end{pmatrix}.
\end{equation}
By Lemma~\ref{le:rank-completion}, 
\eqref{eq:rank-upper-selection},
\eqref{eq:arbitrary-Zt1}, and
\eqref{eq:upper-K-range}, we obtain
\begin{align*}
&\operatorname{rank}
\begin{pmatrix}
\mathcal  H_{0,t} & \mathcal H_t & \widehat K_{t,1}\\
\widehat T_{n,1}^{\mathsf T}
  & \widehat K_{t,1}^{\mathsf T}
  & \widehat Z_{t,1}
\end{pmatrix}
=
\operatorname{rank}
\begin{pmatrix}
\mathcal H_{0,t} & \mathcal H_t\\
\widehat T_{n,1}^{\mathsf T}
  & \widehat K_{t,1}^{\mathsf T}
\end{pmatrix}
=
\operatorname{rank}M_{\mathcal T_t}(n),
\end{align*}
where the last equality follows from
\eqref{eq:rank-MT-selected}. Together with
\eqref{eq:rank-reduction-Zt}, this proves
\eqref{eq:range-Zt}.


  For each $j\in[k_1]$, define
  \begin{equation}\label{eq:main-R-Ztj}
    \widehat{Z}_{t_j}
    \coloneqq
    \widehat{Z}_t
    +
    \sum_{i=1}^{2n+1}
      \binom{2n+1}{i}
      (-1)^i
      \bigl(t_j^i-t^i\bigr)\widehat{S}_{2n+1-i}.
  \end{equation}
  Let $\widehat{Z}_{t_j,1}$ denote its upper-left
  $p_0\times p_0$ principal submatrix. By
  \eqref{eq:main-R-Ej} and \eqref{eq:main-R-Ztj},
  \[
    \widehat{Z}_{t_j,1}
    =
    \widehat{K}_{t_j,1}^{\mathsf T}
    \mathcal{H}_{t_j}^\dagger
    \widehat{K}_{t_j,1}
    +
    \widehat{Z}_{t,1}+E_j.
  \]
  Hence, by \eqref{eq:main-R-choice-Z1} and
  Lemma~\ref{le:lemma-for-increasing-rank-v2}
  (applied with
$A=\cH_{t_j}$, $B=\widehat K_{t_j,1}$, and $P=\widehat{Z}_{t,1}+E_j$),
  \begin{equation}\label{eq:main-R-rank-small}
    \rank
    \begin{pmatrix}
      \mathcal{H}_{t_j} & \widehat{K}_{t_j,1}\\
      \widehat{K}_{t_j,1}^{\mathsf T}
        & \widehat{Z}_{t_j,1}
    \end{pmatrix}
    =
    \rank
    \begin{pmatrix}
      \mathcal{H}_{t_j}\\
      \widehat{K}_{t_j,1}^{\mathsf T}
    \end{pmatrix}
    +p_0,
    \qquad j\in[k_1].
  \end{equation}

  We now extend the shifted moment sequence by setting
  \begin{equation}\label{eq:main-R-extended-shifted-moments}
    T_{2n+1}\coloneqq Z_t,
    \qquad
    T_{2n+2}
    \coloneqq
    \row(K_t^{\mathsf T},Z_t)
    M_{\mathcal{T}_t}(n)^\dagger
    \col(K_t,Z_t).
  \end{equation}
  Let
  \[
    \mathcal{T}_t^{(2n+2)}
    \coloneqq
    (T_0,T_1,\ldots,T_{2n+2}).
  \]
  By \cite{Alb69}, \eqref{inclusion-for-psd} and 
  \eqref{eq:main-R-extended-shifted-moments}, the corresponding moment matrix
  $M_{\mathcal{T}_t^{(2n+2)}}(n+1)$ is positive semidefinite and
  \[
    \rank M_{\mathcal{T}_t^{(2n+2)}}(n+1)
    =
    \rank M_{\mathcal{T}_t}(n).
  \]
  Define the unshifted moments $S_{2n+1}$ and $S_{2n+2}$
        \begin{align}
        \label{def-S2n+1-S2n+2-main-proof}
        \begin{split}
        S_{2n+1}
        &\coloneqq \cE_t^{-1}(T_{2n+1})
        = T_{2n+1}-\sum_{i=1}^{2n+1}\binom{2n+1}{i}(-1)^i t^i S_{2n+1-i}, \\
        S_{2n+2}
        &\coloneqq \cE_t^{-1}(T_{2n+2})
        = T_{2n+2}-\sum_{i=1}^{2n+2}\binom{2n+2}{i}(-1)^i t^i S_{2n+2-i},
        \end{split}
        \end{align}
        and set
        \begin{equation}
            \label{extended-sequence-cS2n+2}
            \cS^{(2n+2)} \coloneqq (S_0, S_1, \ldots, S_{2n+2}).
        \end{equation} 
  Proposition~\ref{prop:change-of-basis} yields
  \[
    M_{\mathcal{S}^{(2n+2)}}(n+1)\succeq0,
    \qquad
    \rank M_{\mathcal{S}^{(2n+2)}}(n+1)
    =
    \rank M(n).
  \]
  Hence $\mathcal{S}^{(2n+2)}$ admits a
  $(\rank M(n))$--atomic $\mathbb{R}$--representing measure
  \[
    \mu=\sum_{\ell=1}^N\delta_{x_\ell}A_\ell.
  \]
  Its restriction to moments of degree at most $2n$ is an
  $\mathbb{R}$--representing measure for $L$.

  It remains to determine the multiplicities of the prescribed points.
  By the inverse change-of-basis formula,
  \begin{equation}
  \label{correspondence-to-prove}
    P\widehat{Z}_{t_j}P^{\mathsf T}
    =
    \mathcal{E}_{t_j}(S_{2n+1}),
    \qquad j\in[k_1].
  \end{equation}

      Indeed, fix $j\in[k_1]$. Using \eqref{eq:main-R-Zhat} and \eqref{eq:main-R-Ztj}, we have
        \begin{align*}
        \widehat Z_{t_j}
        &=
        \widehat Z_t
        +P^T\sum_{i=1}^{2n+1}\binom{2n+1}{i}(-1)^i\bigl(t_j^{\,i}-t^{\,i}\bigr)S_{2n+1-i}P .
        \end{align*}
    By \eqref{eq:main-R-Zhat}, \eqref{eq:main-R-extended-shifted-moments}, and
        \eqref{def-S2n+1-S2n+2-main-proof}, we have
        \[
        \widehat Z_t = P^T T_{2n+1} P = P^T\mathcal{E}_t(S_{2n+1})P
        = P^T\!\left(S_{2n+1}
        +\sum_{i=1}^{2n+1}\binom{2n+1}{i}(-1)^{\,i}t^iS_{2n+1-i}\right)\!P .
        \]
        Substituting this into the previous expression yields
        \begin{align*} 
        \widehat Z_{t_j} 
        &= P^T\!\left( S_{2n+1} +\sum_{i=1}^{2n+1}\binom{2n+1}{i}(-1)^it^iS_{2n+1-i} \right)\!P \\ 
        &\quad\qquad+ P^T\sum_{i=1}^{2n+1} \binom{2n+1}{i}(-1)^i \bigl(t_j^i-t^i\bigr)S_{2n+1-i}P \\ 
        &= P^T\!\left( S_{2n+1} +\sum_{i=1}^{2n+1}\binom{2n+1}{i}(-1)^it_j^iS_{2n+1-i} \right)\!P \\ 
        &= P^T\,\mathcal E_{t_j}(S_{2n+1})\,P . 
        \end{align*}
        Multiplying by $P$ on the left and by $P^T$ on the right give \eqref{correspondence-to-prove}.

Fix $j\in[k_1]$ and define the extended shifted sequence
\[
  \mathcal{T}_{t_j}^{(2n+2)}
  \coloneqq
  \bigl(
    \mathcal{E}_{t_j}(S_0),
    \mathcal{E}_{t_j}(S_1),
    \ldots,
    \mathcal{E}_{t_j}(S_{2n+2})
  \bigr).
\]
By Proposition~\ref{pr:lemmaDep},
\begin{equation}\label{dep-equality-extended}
  \Dep\!\left(X_{\mathcal{T}_t^{(2n+2)}}^n\right)
  =
  \Dep\!\left(X_{\mathcal{T}_{t_j}^{(2n+2)}}^n\right).
\end{equation}
Moreover, 
\[
  X_{\mathcal{T}_{t_j}^{(2n+2)}}^n
  =
  \begin{pmatrix}
    \mathcal{E}_{t_j}(S_n)\\
    K_{t_j}\\
    Z_{t_j}
  \end{pmatrix}.
\]
Since $P$ places the dependent columns in the last $p-p_0$ positions,
\eqref{dep-equality-extended} implies that the last $p-p_0$ columns of
$X_{\mathcal{T}_{t_j}^{(2n+2)}}^nP$ are linear combinations of the
preceding columns. Consequently, the blocks
$\widehat{K}_{t_j,3}$, $\widehat{Z}_{t_j,3}$, and
$\widehat{Z}_{t_j,6}$ do not increase the column rank. Since the full
matrix is symmetric, the corresponding block rows do not increase its
rank either. Therefore,
\begin{equation}\label{eq:main-R-rank-reduction}
  \rank
  \begin{pmatrix}
    \mathcal{H}_{t_j} & \widehat{K}_{t_j}\\
    \widehat{K}_{t_j}^{\mathsf T} & \widehat{Z}_{t_j}
  \end{pmatrix}
  =
  \rank
  \begin{pmatrix}
    \mathcal{H}_{t_j} & \widehat{K}_{t_j,1}\\
    \widehat{K}_{t_j,1}^{\mathsf T}
      & \widehat{Z}_{t_j,1}
  \end{pmatrix}.
\end{equation}

  Moreover, the columns corresponding to
  $\widehat{K}_{t_j,3}$ and the associated blocks of
  $\widehat{Z}_{t_j}$ are linear combinations of the columns
  corresponding to $\mathcal{H}_{t_j}$ and
  $\widehat{K}_{t_j,1}$. Therefore,
\begin{align*}
  &\rank
  \begin{pmatrix}
    \mathcal{H}_{t_j} & K_{t_j}\\
    K_{t_j}^{\mathsf T} & \mathcal{E}_{t_j}(S_{2n+1})
  \end{pmatrix}\\
  =&
  \rank
  \begin{pmatrix}
    \mathcal{H}_{t_j} & \widehat{K}_{t_j}\\
    \widehat{K}_{t_j}^{\mathsf T} & \widehat{Z}_{t_j}
  \end{pmatrix}
  &&\text{(by the permutation congruence induced by $P$)}
  \\
  =&
  \rank
  \begin{pmatrix}
    \mathcal{H}_{t_j} & \widehat{K}_{t_j,1}\\
    \widehat{K}_{t_j,1}^{\mathsf T}
      & \widehat{Z}_{t_j,1}
  \end{pmatrix}
  &&\text{by }\eqref{eq:main-R-rank-reduction}
  \\
  =&
  \rank
  \begin{pmatrix}
    \mathcal{H}_{t_j}\\
    \widehat{K}_{t_j,1}^{\mathsf T}
  \end{pmatrix}
  +p_0
  &&\text{by~\eqref{eq:main-R-rank-small}}
  \\
  =&
  \rank
  \begin{pmatrix}
    \mathcal{H}_{t_j}\\
    \widehat{K}_{t_j}^{\mathsf T}
  \end{pmatrix}
  +p_0
  &&\text{by~\eqref{eq:main-R-rank-H-K}}
  \\
  =&
  \rank
  \begin{pmatrix}
    \mathcal{H}_{t_j}\\
    K_{t_j}^{\mathsf T}
  \end{pmatrix}
  +
  p-\Card\Dep\!\left(X_{\mathcal{T}_{t_j}}^n\right)
  &&\text{by $\widehat{K}_{t_j}=K_{t_j}P$ and the definition of $p_0$.}
\end{align*}

  Corollary~\ref{co:theorem-subsequent-moments-corollary-m=0} now
  implies that
  \[
    \mult_\mu t_j
    =
    \Card A_{\mathcal{T}_{t_j}},
    \qquad j\in[k_1].
  \]
  This completes the proof.
\end{proof}

The following corollary characterizes when the representing measure in
Theorem~\ref{th:mainTheorem_RR} can be chosen so that none of the
prescribed points is an atom.

\begin{corollary}
  \label{co:corollary_mainTheorem_RR}
  Let $n,p\in\mathbb{N}$, and let
  $t_1,\ldots,t_{k_1}\in\mathbb{R}$ be pairwise distinct. Let
  \[
    L\colon\mathbb{R}[x]_{\leq 2n}\to\Sym_p(\mathbb{R})
  \]
  be a linear operator which 
  admits an $\mathbb{R}$--representing matrix measure. Then the following
  statements are equivalent:
  \begin{enumerate}
    \item\label{co:corollary_mainTheorem_RR-pt0}
    There exists a finitely atomic
    $\mathbb{R}$--representing matrix measure $\mu$ for $L$ such that
    \[
      t_j\notin\supp(\mu),
      \qquad j\in[k_1].
    \]
    
    \item\label{co:corollary_mainTheorem_RR-pt1}
    There exists a $(\rank M(n))$--atomic
    $\mathbb{R}$--representing matrix measure $\mu$ for $L$ such that
    \[
      t_j\notin\supp(\mu),
      \qquad j\in[k_1].
    \]

    \item\label{co:corollary_mainTheorem_RR-pt2}
    For every $j\in[k_1]$, we have
    \[
      A_{\mathcal{T}_{t_j}}=\emptyset,
    \]
    where $A_{\mathcal{T}_{t_j}}$ is defined
    in~\eqref{def:def-of-A_t}.
  \end{enumerate}
\end{corollary}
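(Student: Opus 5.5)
We prove the cycle of implications \eqref{co:corollary_mainTheorem_RR-pt1} $\Rightarrow$ \eqref{co:corollary_mainTheorem_RR-pt0} $\Rightarrow$ \eqref{co:corollary_mainTheorem_RR-pt2} $\Rightarrow$ \eqref{co:corollary_mainTheorem_RR-pt1}. All the work has already been done in Corollary~\ref{co:theorem-subsequent-moments-finitely-atomic} and Theorem~\ref{th:mainTheorem_RR}, so each step is short.

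\textbf{Step 1: \eqref{co:corollary_mainTheorem_RR-pt1} $\Rightarrow$ \eqref{co:corollary_mainTheorem_RR-pt0}.} This is trivial. A $(\rank M(n))$--atomic measure is, by definition, finitely atomic.

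\textbf{Step 2: \eqref{co:corollary_mainTheorem_RR-pt0} $\Rightarrow$ \eqref{co:corollary_mainTheorem_RR-pt2}.} Let $\mu$ be a finitely atomic $\mathbb{R}$--representing measure for $L$ with $t_j\notin\supp(\mu)$ for all $j\in[k_1]$. Then $\mult_\mu t_j=0$. Apply Corollary~\ref{co:theorem-subsequent-moments-finitely-atomic} with $t=t_j$ to the sequence $\mathcal{S}=(L(x^i))_{i\in[0;2n]}$. It gives $0=\mult_\mu t_j\geq \Card A_{\mathcal{T}_{t_j}}$, hence $A_{\mathcal{T}_{t_j}}=\emptyset$ for every $j$.

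\textbf{Step 3: \eqref{co:corollary_mainTheorem_RR-pt2} $\Rightarrow$ \eqref{co:corollary_mainTheorem_RR-pt1}.} Since $L$ admits an $\mathbb{R}$--representing measure, Theorem~\ref{th:mainTheorem_RR} applies to the points $t_1,\ldots,t_{k_1}$. It yields a $(\rank M(n))$--atomic $\mathbb{R}$--representing measure $\mu$ with $\mult_\mu t_j=\Card A_{\mathcal{T}_{t_j}}=0$ for each $j$. By definition, $\mult_\mu x=\rank\mu(\{x\})$, so zero multiplicity means $\mu(\{t_j\})=\mathbf{0}_p$. For a finitely atomic measure this is exactly $t_j\notin\supp(\mu)$.

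No step is a genuine obstacle. The only point needing care is to make sure the notation $A_{\mathcal{T}_{t_j}}$ refers to the same object in both cited results. In each case it is built from the shifted sequence of the original degree-$2n$ moments via \eqref{def:def-of-A_t}, not from any extension, so the two results can be combined directly.
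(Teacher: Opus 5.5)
Your proposal is correct and follows the paper's proof essentially verbatim. The paper also proves \eqref{co:corollary_mainTheorem_RR-pt0}$\Rightarrow$\eqref{co:corollary_mainTheorem_RR-pt2} via Corollary~\ref{co:theorem-subsequent-moments-finitely-atomic}, \eqref{co:corollary_mainTheorem_RR-pt2}$\Rightarrow$\eqref{co:corollary_mainTheorem_RR-pt1} via Theorem~\ref{th:mainTheorem_RR} with every multiplicity $\Card A_{\mathcal{T}_{t_j}}=0$, and notes that \eqref{co:corollary_mainTheorem_RR-pt1}$\Rightarrow$\eqref{co:corollary_mainTheorem_RR-pt0} is clear.
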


\begin{proof}
  Assume first that
  \eqref{co:corollary_mainTheorem_RR-pt0} holds. Then
  $\mult_\mu t_j=0$,
    $j\in[k_1].$
  It follows from Corollary \ref{co:theorem-subsequent-moments-finitely-atomic} that
  $A_{\mathcal{T}_{t_j}}=\emptyset$ for every $j\in[k_1]$. Hence,
  \eqref{co:corollary_mainTheorem_RR-pt2} holds.

  Assume that
  \eqref{co:corollary_mainTheorem_RR-pt2} holds. By
  Theorem~\ref{th:mainTheorem_RR}, there exists a
  $(\rank M(n))$--atomic $\mathbb{R}$--representing matrix measure
  $\mu$ for $L$ such that
  $
    \mult_\mu t_j
    =
    \Card A_{\mathcal{T}_{t_j}}
    =
    0,
    $
    $j\in[k_1].$
  Therefore, $t_j\notin\supp(\mu)$ for every $j\in[k_1]$, proving
  \eqref{co:corollary_mainTheorem_RR-pt1}.

  The implication
  $\eqref{co:corollary_mainTheorem_RR-pt1}\Rightarrow\eqref{co:corollary_mainTheorem_RR-pt0}$ is clear.
\end{proof}

We are now ready to prove Theorem~\ref{thm:main-1} in the case
$K=\RR$.

\begin{proof}[Proof of Theorem~\ref{thm:main-1} for $K=\RR$]
  The implication
  \eqref{thm:main-1-i} $\Rightarrow$ \eqref{thm:main-1-ii}
  follows from~\cite[Corollary~5.3]{MS23+}.

  We next prove
  \eqref{thm:main-1-ii} $\Rightarrow$ \eqref{thm:main-1-iv}.
  Let $\nu$ be a finitely atomic $\RR$-representing measure for $\mathcal L$. By Proposition 
  \ref{pr:proposition-measure-sets}, the measure $\mu :=q^{-1}\cdot\nu$ is a finitely atomic $\RR$-representing measure for $L$ and satisfies $\mu(\Lambda)=\mathbf 0_p$.
  By Theorem \ref{th:Hamburger-matricial},
  \eqref{thm:main-1-iv-a} and \eqref{thm:main-1-iv-b} hold.
  It remains to prove \eqref{thm:main-1-iv-c}.
  Consider the extended moment
  sequence
  \[
    \widetilde{\mathcal{S}}
    \coloneqq
    (S_0,\ldots,S_{2n+2k}),
    \qquad
    S_i\coloneqq\int_{\RR}x^i\,d\mu,
  \]
  satisfying
  \[
    \rank M_{\widetilde{\mathcal{S}}}(n+k)
    =
    \rank M_{\widetilde{\mathcal{S}}}(n+k-1).
  \]

  Fix $t_j\in\Lambda$. In what follows, let
  $\mathcal{H}_{t_j}$ and $K_{t_j}$ denote the matrices defined as in
  Theorem~\ref{th:theorem-subsequent-moments}, with
  $\widetilde{\mathcal{S}}$ in place of $\mathcal{S}$ and $n+k-1$ in
  place of $n$. The flatness condition implies that
  \[
    p
    =
    \Card\Dep\!\left(
      X_{\mathcal{T}_{t_j}}^{n+k}
    \right)
  \]
  and
  \[
    \rank
    \begin{pmatrix}
      \mathcal{H}_{t_j} & K_{t_j}\\
      K_{t_j}^{\mathsf T}
        & \mathcal{E}_{t_j}(S_{2n+2k-1})
    \end{pmatrix}
    =
    \rank
    \begin{pmatrix}
      \mathcal{H}_{t_j}\\
      K_{t_j}^{\mathsf T}
    \end{pmatrix}.
  \]
  Thus, condition~{\rm(2)} of
  Corollary~\ref{co:theorem-subsequent-moments-corollary-m=0}
  is satisfied. Consequently,
  \[
    \mult_\mu t_j
    =
    \Card A_{\mathcal{T}_{t_j}}(n+k),
  \]
  where
  \[
    A_{\mathcal{T}_{t_j}}(r)
    \coloneqq
    \bigcup_{i=1}^r
    \Depn_1\!\left(X_{\mathcal{T}_{t_j}}^i\right),
    \qquad r\in[n+k].
  \]
  Since $t_j\in\Lambda$, we have $\mult_\mu t_j=0$, and hence
  \[
    A_{\mathcal{T}_{t_j}}(n+k)=\emptyset.
  \]
  The sets $A_{\mathcal{T}_{t_j}}(r)$ are increasing in $r$, so
  \[
    A_{\mathcal{T}_{t_j}}(r)=\emptyset,
    \qquad r\in[n+k].
  \]
  Since $t_j\in\Lambda$ was arbitrary, this proves
  \eqref{thm:main-1-iv}.

  The implication
  \eqref{thm:main-1-iv} $\Rightarrow$ \eqref{thm:main-1-iii}
  follows from
  Corollary~\ref{co:corollary_mainTheorem_RR}.
  Finally, the implication
  \eqref{thm:main-1-iii} $\Rightarrow$ \eqref{thm:main-1-ii}
  is immediate.
\end{proof}

	\section{Minimal $[0,\infty)$-representing Measures with Minimal Multiplicities at Prescribed Atoms}
\label{sec:solution-to-the-K-MRTMP-v2}

In this section, we establish analogues of the results from Section~\ref{sec:solution-to-the-K-MRTMP} for the case where the support of  representing measures is contained in $[0,\infty)$. We then use these results to complete the proof of Theorem~\ref{thm:main-1} for $K=[0,\infty)$.\\

The following result is an analogue of Theorem \ref{th:mainTheorem_RR}, where $\RR$ is replaced by $[0,\infty)$.

\begin{theorem}
  \label{th:mainTheorem_halfline}
  Let $n,p\in\mathbb{N}$, and let
  $t_1,\ldots,t_{k_1}\in[0,\infty)$ be pairwise distinct. Let
  \[
    L\colon\mathbb{R}[x]_{\leq 2n}\to\Sym_p(\mathbb{R})
  \]
  be a linear operator with an $[0,\infty)$--representing matrix
  measure. 
  Then $L$ admits a $(\rank M(n))$--atomic
  $[0,\infty)$--representing matrix measure $\mu$ such that
  \[
    \mult_\mu t_j
    =
    \Card A_{\mathcal{T}_{t_j}},
    \qquad j\in[k_1],
  \]
  where $A_{\mathcal{T}_{t_j}}$ is defined
  in~\eqref{def:def-of-A_t}.
  

\end{theorem}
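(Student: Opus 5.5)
We follow the scheme of the proof of Theorem~\ref{th:mainTheorem_RR}: build a positive semidefinite flat extension $M(n+1)$ by choosing the odd moment $S_{2n+1}$ (through the shifted block $Z_t$), and then read off the multiplicities at each $t_j$ via Corollary~\ref{co:theorem-subsequent-moments-corollary-m=0}. The new requirement is that the resulting $(\rank M(n))$--atomic measure be supported in $[0,\infty)$. We take $t=0$ as the base point, so that $\mathcal H_0=\mathcal H_x$ and $K_0=\col(S_{n+i})_{i\in[n]}$.

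\textbf{Step 1 (solvability data).} By Theorem~\ref{th:Stieltjes-matricial}, $M(n)\succeq0$, $\mathcal H_x\succeq0$, and $\mathcal C(\col(S_{n+i})_{i\in[n]})\subseteq\mathcal C(\mathcal H_x)$. In particular the Hamburger conditions of Theorem~\ref{th:Hamburger-matricial} also hold. Hence every step of the proof of Theorem~\ref{th:mainTheorem_RR} up to \eqref{eq:main-R-rank-small} is available, including the range claim \eqref{eq:range-Zt}.

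\textbf{Step 2 (choice of $\widehat Z_{0,1}$).} In that proof the only free parameter is $\widehat Z_{0,1}\in\Sym_{p_0}(\RR)$. It must satisfy $\widehat Z_{0,1}+E_j\succ0$ for all $j$, which is an open condition holding for all sufficiently large $\widehat Z_{0,1}$, say $\widehat Z_{0,1}=cI$ with $c\gg0$. We additionally require that the localizing matrix $\mathcal H_x^{(n+1)}:=(S_{i+j-1})_{i,j=1}^{n+1}$, whose last diagonal block is $S_{2n+1}=Z_0$, be positive semidefinite and satisfy $\rank\mathcal H_x^{(n+1)}=\rank\begin{pmatrix}\mathcal H_x & K_0\end{pmatrix}+\ldots$ in the form needed to apply Theorem~\ref{th:Stieltjes-matricial} to the extended sequence $(S_0,\dots,S_{2n+2})$.

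Since $\mathcal C(K_0)\subseteq\mathcal C(\mathcal H_x)$, positivity of $\mathcal H_x^{(n+1)}$ reduces by \cite{Alb69} to $Z_0\succeq K_0^{\mathsf T}\mathcal H_x^{\dagger}K_0$. After the permutation $P$, the dependent blocks $\widehat Z_{0,3},\widehat Z_{0,6}$ are generated through $\widehat G$ from $\widehat Z_{0,1}$. So this amounts to the single condition $\widehat Z_{0,1}-\widehat K_{0,1}^{\mathsf T}\mathcal H_x^\dagger\widehat K_{0,1}\succ0$, which is again true for large $c$. Thus one choice of $c$ handles all finitely many open conditions simultaneously.

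\textbf{Step 3 (support in $[0,\infty)$).} Define $T_{2n+2}$ as in \eqref{eq:main-R-extended-shifted-moments}. This yields a flat psd extension $M(n+1)$ with $\mathcal C(\col(S_{n+1+i})_{i\in[n+1]})\subseteq\mathcal C(M(n))\subseteq$ the relevant spaces. Then check the conditions of Theorem~\ref{th:Stieltjes-matricial} at level $n+1$: $\mathcal H_x^{(n+1)}\succeq0$ and $\mathcal C(\col(S_{n+1+i})_{i\in[n+1]})\subseteq\mathcal C(\mathcal H_x^{(n+1)})$. This gives a $[0,\infty)$--representing measure for the extended sequence with $\rank M(n+1)=\rank M(n)$ atoms.

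Alternatively, and more simply, apply Theorem~\ref{th:Hamburger-matricial} to obtain the unique flat measure $\mu$ and argue $\supp\mu\subseteq[0,\infty)$ as follows. Because $\mu$ is the flat (hence uniquely determined) representing measure of $M(n+1)$, the localizing psd condition $\int x\,f^{\mathsf T}f\,d\mu\succeq0$ holds for matrix polynomials $f$ of degree $\le n$. These interpolate the Lagrange-type indicators at the finitely many atoms (there are at most $\rank M(n)\le(n+1)p$ of them, and the flat structure gives such interpolants). This forces every atom to be $\ge0$. The precise interpolation argument at matrix level is the step I expect to be the main obstacle. I would handle it by quoting the flat-extension form of Theorem~\ref{th:Stieltjes-matricial} directly rather than interpolating.

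\textbf{Step 4 (multiplicities).} The measure has the same $S_{2n+1}$ as constructed, so the rank computation ending the proof of Theorem~\ref{th:mainTheorem_RR} goes through verbatim, using \eqref{eq:main-R-rank-small} for each $t_j$. Corollary~\ref{co:theorem-subsequent-moments-corollary-m=0} then gives $\mult_\mu t_j=\Card A_{\mathcal T_{t_j}}$, since that corollary concerns $\RR$--representing measures and $\mu$ is one.
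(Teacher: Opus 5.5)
Your plan is the paper's proof. You take base point $t=0$ (so $\mathcal H_0=\mathcal H_x$ and $K_0=\col(S_{n+i})_{i\in[n]}$) and make one large choice of $\widehat Z_{0,1}$ that gives both $\widehat Z_{0,1}+E_j\succ0$ for all $j$ and $S_{2n+1}-K_0^{\mathsf T}\mathcal H_x^\dagger K_0\succeq0$. You then build the flat extension via $T_{2n+2}$, apply Theorem~\ref{th:Stieltjes-matricial} at level $n+1$, and read off the multiplicities with Corollary~\ref{co:theorem-subsequent-moments-corollary-m=0}. Your $\widehat Z_{0,1}=cI_{p_0}$ is equivalent to the paper's $\widehat K_{0,1}^{\mathsf T}\mathcal H_0^\dagger\widehat K_{0,1}+\alpha I_{p_0}$. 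Your reduction of the Schur condition to the $p_0\times p_0$ block is correct; the paper verifies it through the factorization
\[
  \widehat Z_0-\widehat K_0^{\mathsf T}\mathcal H_0^\dagger\widehat K_0
  =
  \begin{pmatrix} I_{p_0}\\ \widehat G_1^{\mathsf T}\end{pmatrix}
  \bigl(\widehat Z_{0,1}-\widehat K_{0,1}^{\mathsf T}\mathcal H_0^\dagger\widehat K_{0,1}\bigr)
  \begin{pmatrix} I_{p_0} & \widehat G_1\end{pmatrix},
\]
where $\widehat G_1$ is the bottom $p_0$ rows of $\widehat G$. This identity uses $\mathcal C(\widehat K_{0,1})\subseteq\mathcal C(\mathcal H_0)$, which comes from the Stieltjes range condition. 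You do not need to impose any rank requirement on $\mathcal H_x^{(n+1)}\coloneqq(S_{i+j-1})_{i,j=1}^{n+1}$ in Step 2; the unfinished condition there can simply be dropped.

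The step that fails as written is the range inclusion $\mathcal C(\col(S_{n+1+i})_{i\in[n+1]})\subseteq\mathcal C(\mathcal H_x^{(n+1)})$ in Step 3. The first inclusion in your chain, into $\mathcal C(M(n))$, is fine. The second, $\mathcal C(M(n))\subseteq\mathcal C(\mathcal H_x^{(n+1)})$, is false in general. If $\mu=\sum_\ell\delta_{x_\ell}A_\ell$ is the flat measure and $V_\ell\coloneqq\col(x_\ell^iI_p)_{i\in[0;n]}$, then $\mathcal H_x^{(n+1)}=\sum_\ell x_\ell V_\ell A_\ell V_\ell^{\mathsf T}$, so $\rank\mathcal H_x^{(n+1)}\le\rank M(n)-\mult_\mu 0$. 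By Corollary~\ref{co:theorem-subsequent-moments-finitely-atomic}, $0$ is an atom of every finitely atomic representing measure as soon as $A_{\mathcal T_0}\neq\emptyset$, which is a case the theorem must cover.

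The correct argument, which is the paper's, restricts the flat relation directly. Flatness writes the last block column of $M(n+1)$ as a combination of its first $n+1$ block columns. On the last $(n+1)p$ rows, those first $n+1$ block columns are exactly $\mathcal H_x^{(n+1)}$, and the last block column is $\col(S_{n+1+i})_{i\in[n+1]}$, which gives the inclusion.

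Your alternative route is also not an obstacle. Write $M(n)=WW^{\mathsf T}$ with $W\coloneqq\row(V_\ell A_\ell^{1/2})_\ell$. Since $\rank W=\sum_\ell\rank A_\ell$, the map $c\mapsto(A_\ell^{1/2}V_\ell^{\mathsf T}c)_\ell$ is onto $\bigoplus_\ell\mathcal C(A_\ell)$. So for each $k$ you can pick $c$ with $A_\ell^{1/2}V_\ell^{\mathsf T}c=0$ for $\ell\neq k$ and $A_k^{1/2}V_k^{\mathsf T}c\neq0$. Then $0\le c^{\mathsf T}\mathcal H_x^{(n+1)}c=x_k\|A_k^{1/2}V_k^{\mathsf T}c\|^2$ forces $x_k\ge0$. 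This uses only Theorem~\ref{th:Hamburger-matricial} and $\mathcal H_x^{(n+1)}\succeq0$, and bypasses the level-$(n+1)$ range condition altogether.
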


\begin{proof}
    The construction of the flat extension of $M(n)$ must meet two requirements simultaneously. First, we choose $L(x^{2n+1})$ so that the rank identities at all prescribed points hold. Second, we choose the same matrix sufficiently positive to ensure the Stieltjes localizing condition. The remainder of the argument then completes the flat extension as in the proof of Theorem \ref{th:mainTheorem_RR}.

    Set
  $S_i\coloneqq L(x^i)$,
  $i\in[0;2n],$
  and write
  $
    \mathcal{S}\coloneqq(S_0,S_1,\ldots,S_{2n}).
  $
  For each $t\in\mathbb{R}$, let $\mathcal{T}_t$ be the shifted
  sequence defined in~\eqref{def:sequenceT}, and adopt the notation
  introduced in Subsections~\ref{subsec:change-of-basis}
  and~\ref{subsec:column-dependencies}. We follow the construction
  in the proof of Theorem~\ref{th:mainTheorem_RR}, taking the reference
  point $t=0$, up to the choice of the block
  $\widehat{Z}_{0,1}$. We retain all notation introduced there. In
  particular,
  \[
    \widehat{K}_0
    =
    \begin{pmatrix}
      \widehat{K}_{0,1} & \widehat{K}_{0,3}
    \end{pmatrix},
  \]
  where $\widehat{K}_{0,1}$ has $p_0$ columns, and
  \begin{equation}\label{eq:halfline-K03-dependence}
    \widehat{K}_{0,3}
    =
    \begin{pmatrix}
      \mathcal{H}_0 & \widehat{K}_{0,1}
    \end{pmatrix}
    \widehat{G}.
  \end{equation}
  Since the reference point is $t=0$, we have
  \[
    \mathcal{H}_0=\mathcal{H}_x(n-1).
  \]

  In addition to the rank conditions required in the proof of
  Theorem~\ref{th:mainTheorem_RR}, we must choose the subsequent moment
  $S_{2n+1}$ so that
  \begin{equation}\label{eq:halfline-Schur-condition}
    S_{2n+1}
    -
    \row(S_{n+i})_{i\in[n]}
    \mathcal{H}_x(n-1)^\dagger
    \col(S_{n+i})_{i\in[n]}
    \succeq0.
  \end{equation}

  For each $j\in[k_1]$, define
  \begin{equation}\label{eq:halfline-Fj}
    F_j
    \coloneqq
    \widehat{K}_{0,1}^{\mathsf T}
    \mathcal{H}_0^\dagger
    \widehat{K}_{0,1}
    +
    \sum_{i=1}^{2n+1}
      \binom{2n+1}{i}
      (-1)^it_j^i
      \widehat{S}_{2n+1-i,1}
    -
    \widehat{K}_{t_j,1}^{\mathsf T}
    \mathcal{H}_{t_j}^\dagger
    \widehat{K}_{t_j,1},
  \end{equation}
  where $\widehat{S}_{i,1}$ denotes the upper-left
  $p_0\times p_0$ principal submatrix of
  \[
    \widehat{S}_i
    \coloneqq
    P^{\mathsf T}S_iP.
  \]
  The sum in~\eqref{eq:halfline-Fj} is the change-of-basis correction
  from the basis centered at $0$ to the basis centered at $t_j$.

  Since the family $(F_j)_{j\in[k_1]}$ is finite, we may choose
  $\alpha>0$ sufficiently large that
  \[
    \alpha I_{p_0}+F_j\succ0,
    \qquad j\in[k_1].
  \]
  Set
  \begin{equation}\label{eq:halfline-Pplus}
    P_+\coloneqq\alpha I_{p_0}.
  \end{equation}
  In particular, $P_+\succ0$. Define
  \begin{equation}\label{eq:halfline-Z01}
    \widehat{Z}_{0,1}
    \coloneqq
    \widehat{K}_{0,1}^{\mathsf T}
    \mathcal{H}_0^\dagger
    \widehat{K}_{0,1}
    +
    P_+.
  \end{equation}
  As in the proof of Theorem~\ref{th:mainTheorem_RR}, define
  \begin{equation}\label{eq:halfline-Z03-Z06}
    \widehat{Z}_{0,3}
    \coloneqq
    \begin{pmatrix}
      \widehat{K}_{0,1}^{\mathsf T} & \widehat{Z}_{0,1}
    \end{pmatrix}
    \widehat{G},
    \qquad
    \widehat{Z}_{0,6}
    \coloneqq
    \begin{pmatrix}
      \widehat{K}_{0,3}^{\mathsf T}
      &
      \widehat{Z}_{0,3}^{\mathsf T}
    \end{pmatrix}
    \widehat{G},
  \end{equation}
  and set
  \begin{equation}\label{eq:halfline-Z0}
    \widehat{Z}_0
    \coloneqq
    \begin{pmatrix}
      \widehat{Z}_{0,1} & \widehat{Z}_{0,3}\\
      \widehat{Z}_{0,3}^{\mathsf T} & \widehat{Z}_{0,6}
    \end{pmatrix},
    \qquad
    Z_0
    \coloneqq
    P\widehat{Z}_0P^{\mathsf T}.
  \end{equation}
  Since the reference point is $0$, we set
  \[
    S_{2n+1}\coloneqq Z_0.
  \]

  We claim that
  \begin{equation}\label{eq:halfline-permuted-Schur-condition}
    \widehat{Z}_0
    -
    \widehat{K}_0^{\mathsf T}
    \mathcal{H}_0^\dagger
    \widehat{K}_0
    \succeq0.
  \end{equation}

Write
\[
  \widehat G
  =
  \begin{pmatrix}
    \widehat G_0\\
    \widehat G_1
  \end{pmatrix},
  \qquad
  \widehat G_0\in M_{np\times(p-p_0)}(\mathbb R),
  \quad
  \widehat G_1\in M_{p_0\times(p-p_0)}(\mathbb R).
\]
Then
\begin{equation}
\label{eq:halfline-dependent-blocks}
  \widehat K_{0,3}
  =
  \mathcal H_0\widehat G_0
  +
  \widehat K_{0,1}\widehat G_1.
\end{equation}
Moreover, by the definitions of $\widehat Z_{0,3}$ and
$\widehat Z_{0,6}$,
\begin{align}
  \widehat Z_{0,3}
  &=
  \widehat K_{0,1}^{\mathsf T}\widehat G_0
  +
  \widehat Z_{0,1}\widehat G_1,
  \label{eq:halfline-Z03-expansion}\\
  \widehat Z_{0,6}
  &=
  \widehat G_0^{\mathsf T}\mathcal H_0\widehat G_0
  +
  \widehat G_1^{\mathsf T}
    \widehat K_{0,1}^{\mathsf T}\widehat G_0
  +
  \widehat G_0^{\mathsf T}
    \widehat K_{0,1}\widehat G_1
  +
  \widehat G_1^{\mathsf T}
    \widehat Z_{0,1}\widehat G_1.
  \label{eq:halfline-Z06-expansion}
\end{align}
Recall the definition of $\widehat Z_{0,1}$ from \eqref{eq:halfline-Z01}.
Since
\[
  \mathcal C(\widehat K_{0,1})
  \subseteq
  \mathcal C(\mathcal H_0),
\]
and $\mathcal H_0$ is symmetric, we have
\begin{equation}
\label{eq:halfline-pseudoinverse-identities}
  \mathcal H_0\mathcal H_0^\dagger\widehat K_{0,1}
  =
  \widehat K_{0,1},
  \qquad
  \widehat K_{0,1}^{\mathsf T}
  \mathcal H_0^\dagger\mathcal H_0
  =
  \widehat K_{0,1}^{\mathsf T}.
\end{equation}

We now compute the four blocks of
\[
  \widehat Z_0
  -
  \widehat K_0^{\mathsf T}
  \mathcal H_0^\dagger
  \widehat K_0,
  \qquad
  \widehat K_0
  =
  \begin{pmatrix}
    \widehat K_{0,1} & \widehat K_{0,3}
  \end{pmatrix}.
\]
For the upper-left block, \eqref{eq:halfline-Z01} gives
\begin{equation}
\label{eq:halfline-Schur-block-11}
  \widehat Z_{0,1}
  -
  \widehat K_{0,1}^{\mathsf T}
  \mathcal H_0^\dagger
  \widehat K_{0,1}
  =
  P_+.
\end{equation}
Using \eqref{eq:halfline-dependent-blocks},
\eqref{eq:halfline-Z03-expansion}, and
\eqref{eq:halfline-pseudoinverse-identities}, we obtain
\begin{align}
  \widehat Z_{0,3}
  -
  \widehat K_{0,1}^{\mathsf T}
  \mathcal H_0^\dagger
  \widehat K_{0,3}
  &=
  \widehat K_{0,1}^{\mathsf T}\widehat G_0
  +
  \widehat Z_{0,1}\widehat G_1
  \notag\\
  &\quad
  -
  \widehat K_{0,1}^{\mathsf T}
  \mathcal H_0^\dagger
  \bigl(
    \mathcal H_0\widehat G_0
    +
    \widehat K_{0,1}\widehat G_1
  \bigr)
  \notag\\
  &=
  \widehat K_{0,1}^{\mathsf T}\widehat G_0
  +
  \widehat Z_{0,1}\widehat G_1
  -
  \widehat K_{0,1}^{\mathsf T}\widehat G_0
  -
  \widehat K_{0,1}^{\mathsf T}
    \mathcal H_0^\dagger
    \widehat K_{0,1}\widehat G_1
  \notag\\
  &=
  \left(
    \widehat Z_{0,1}
    -
    \widehat K_{0,1}^{\mathsf T}
      \mathcal H_0^\dagger
      \widehat K_{0,1}
  \right)\widehat G_1
  \notag\\
  &=
  P_+\widehat G_1.
  \label{eq:halfline-Schur-block-13}
\end{align}
Taking transposes yields
\begin{equation}
\label{eq:halfline-Schur-block-31}
  \widehat Z_{0,3}^{\mathsf T}
  -
  \widehat K_{0,3}^{\mathsf T}
  \mathcal H_0^\dagger
  \widehat K_{0,1}
  =
  \widehat G_1^{\mathsf T}P_+.
\end{equation}

Finally, by \eqref{eq:halfline-dependent-blocks} and
\eqref{eq:halfline-Z06-expansion},
\begin{align}
  &\widehat Z_{0,6}
  -
  \widehat K_{0,3}^{\mathsf T}
  \mathcal H_0^\dagger
  \widehat K_{0,3}
  \notag\\
  &=
  \widehat G_0^{\mathsf T}\mathcal H_0\widehat G_0
  +
  \widehat G_1^{\mathsf T}
    \widehat K_{0,1}^{\mathsf T}\widehat G_0
  +
  \widehat G_0^{\mathsf T}
    \widehat K_{0,1}\widehat G_1
  +
  \widehat G_1^{\mathsf T}
    \widehat Z_{0,1}\widehat G_1
  \notag\\
  &\quad
  -
  \bigl(
    \widehat G_0^{\mathsf T}\mathcal H_0
    +
    \widehat G_1^{\mathsf T}\widehat K_{0,1}^{\mathsf T}
  \bigr)
  \mathcal H_0^\dagger
  \bigl(
    \mathcal H_0\widehat G_0
    +
    \widehat K_{0,1}\widehat G_1
  \bigr)
  \notag\\
  &=
  \widehat G_1^{\mathsf T}
  \left(
    \widehat Z_{0,1}
    -
    \widehat K_{0,1}^{\mathsf T}
      \mathcal H_0^\dagger
      \widehat K_{0,1}
  \right)
  \widehat G_1
  \notag\\
  &=
  \widehat G_1^{\mathsf T}P_+\widehat G_1.
  \label{eq:halfline-Schur-block-33}
\end{align}
Here we used
$
  \mathcal H_0\mathcal H_0^\dagger\mathcal H_0
  =
  \mathcal H_0
$
together with \eqref{eq:halfline-pseudoinverse-identities}.

Combining
\eqref{eq:halfline-Schur-block-11},
\eqref{eq:halfline-Schur-block-13},
\eqref{eq:halfline-Schur-block-31}, and
\eqref{eq:halfline-Schur-block-33}, we obtain
\begin{align}
  \widehat{Z}_0
  -
  \widehat{K}_0^{\mathsf T}
  \mathcal{H}_0^\dagger
  \widehat{K}_0
  &=
  \begin{pmatrix}
    P_+ & P_+\widehat{G}_1\\
    \widehat{G}_1^{\mathsf T}P_+
      & \widehat{G}_1^{\mathsf T}P_+\widehat{G}_1
  \end{pmatrix}
  \notag\\
  &=
  \begin{pmatrix}
    I_{p_0}\\
    \widehat{G}_1^{\mathsf T}
  \end{pmatrix}
  P_+
  \begin{pmatrix}
    I_{p_0} & \widehat{G}_1
  \end{pmatrix}
  \succeq0.
  \label{eq:halfline-Schur-factorization}
\end{align}
  This proves~\eqref{eq:halfline-permuted-Schur-condition}.

  Since
  \[
    \widehat{K}_0=K_0P
    \quad\text{and}\quad
    S_{2n+1}=P\widehat{Z}_0P^{\mathsf T},
  \]
  congruence by $P$ in
  \eqref{eq:halfline-permuted-Schur-condition} yields
  \[
    S_{2n+1}
    -
    K_0^{\mathsf T}
    \mathcal{H}_0^\dagger
    K_0
    \succeq0.
  \]
  Since
  \[
    K_0^{\mathsf T}
    =
    \row(S_{n+i})_{i\in[n]},
    \qquad
    \mathcal{H}_0=\mathcal{H}_x(n-1),
  \]
  condition~\eqref{eq:halfline-Schur-condition} follows.

  We next verify the rank conditions at the prescribed points. For
  each $j\in[k_1]$, define
  \begin{equation}\label{eq:halfline-Ztj}
    \widehat{Z}_{t_j}
    \coloneqq
    \widehat{Z}_0
    +
    \sum_{i=1}^{2n+1}
      \binom{2n+1}{i}
      (-1)^it_j^i
      \widehat{S}_{2n+1-i}.
  \end{equation}
  Let $\widehat{Z}_{t_j,1}$ denote its upper-left
  $p_0\times p_0$ principal submatrix. By
  \eqref{eq:halfline-Fj},
  \eqref{eq:halfline-Z01}, and
  \eqref{eq:halfline-Ztj}, one has
  \[
    \widehat{Z}_{t_j,1}
    =
    \widehat{K}_{t_j,1}^{\mathsf T}
    \mathcal{H}_{t_j}^\dagger
    \widehat{K}_{t_j,1}
    +
    P_+
    +
    F_j.
  \]
  Since $P_++F_j\succ0$, Lemma~\ref{le:lemma-for-increasing-rank-v2}
  gives
  \begin{equation}\label{eq:halfline-rank-small}
    \rank
    \begin{pmatrix}
      \mathcal{H}_{t_j} & \widehat{K}_{t_j,1}\\
      \widehat{K}_{t_j,1}^{\mathsf T}
        & \widehat{Z}_{t_j,1}
    \end{pmatrix}
    =
    \rank
    \begin{pmatrix}
      \mathcal{H}_{t_j}\\
      \widehat{K}_{t_j,1}^{\mathsf T}
    \end{pmatrix}
    +
    p_0,
    \qquad j\in[k_1].
  \end{equation}

  We now complete the extension exactly as in the proof of
  Theorem~\ref{th:mainTheorem_RR}. Choose
  \[
    S_{2n+2}
    \coloneqq
    \row(S_{n+i})_{i\in[n+1]}
    M(n)^\dagger
    \col(S_{n+i})_{i\in[n+1]}.
  \]
  Define
  \[
    \mathcal{S}^{(2n+2)}
    \coloneqq
    (S_0,S_1,\ldots,S_{2n+2}).
  \]
  The corresponding moment matrix $M(n+1)$ satisfies
  \begin{equation}\label{eq:halfline-flat-extension}
    M(n+1)\succeq0,
    \qquad
    \rank M(n+1)=\rank M(n).
  \end{equation}

  Since by assumption $L$ admits an $[0,\infty)$--representing measure,
  Theorem \ref{th:Stieltjes-matricial} implies that
  \[
    \mathcal{H}_x(n-1)\succeq0
  \qquad
  \text{and}
  \qquad
    \mathcal{C}\!\left(
      \col(S_{n+i})_{i\in[n]}
    \right)
    \subseteq
    \mathcal{C}\!\left(\mathcal{H}_x(n-1)\right).
  \]
  Together with~\eqref{eq:halfline-Schur-condition}, the generalized
  Schur complement criterion gives
  \[
    \mathcal{H}_x(n)
    =
    \begin{pmatrix}
      \mathcal{H}_x(n-1)
        & \col(S_{n+i})_{i\in[n]}\\
      \row(S_{n+i})_{i\in[n]}
        & S_{2n+1}
    \end{pmatrix}
    \succeq0.
  \]
  By the rank equality in \eqref{eq:halfline-flat-extension}, the last block column of $M(n+1)$ belongs to the column space of its first $n+1$ block columns. 
  Restricting the corresponding column relation to the last $(n+1)p$ rows of $M(n+1)$, whose first $n+1$ block columns form 
      $\cH_x(n)$, gives
\[
    \mathcal{C}\!\left(
      \col(S_{n+1+i})_{i\in[n+1]}
    \right)
    \subseteq
    \mathcal{C}\!\left(\mathcal{H}_x(n)\right).
  \]
  Therefore, Theorem \ref{th:Stieltjes-matricial}, applied with $n$ replaced by
  $n+1$, shows that the Riesz mapping
  $L_{\mathcal{S}^{(2n+2)}}$ admits a
  $(\rank M(n+1))$--atomic $[0,\infty)$--representing matrix measure
  $\mu$. By~\eqref{eq:halfline-flat-extension},
  \[
    \rank M(n+1)=\rank M(n),
  \]
  and hence $\mu$ is $(\rank M(n))$--atomic. Its restriction to
  $\mathbb{R}[x]_{\leq2n}$ represents the original operator $L$.

  Finally, the change-of-basis and rank arguments from the proof of
  Theorem~\ref{th:mainTheorem_RR} apply without modification. In
  particular, for every $j\in[k_1]$,
  \[
    P\widehat{Z}_{t_j}P^{\mathsf T}
    =
    \mathcal{E}_{t_j}(S_{2n+1}),
  \]
  and
  \begin{align*}
    \rank
    \begin{pmatrix}
      \mathcal{H}_{t_j} & K_{t_j}\\
      K_{t_j}^{\mathsf T}
        & \mathcal{E}_{t_j}(S_{2n+1})
    \end{pmatrix}
    &=
    \rank
    \begin{pmatrix}
      \mathcal{H}_{t_j} & \widehat{K}_{t_j,1}\\
      \widehat{K}_{t_j,1}^{\mathsf T}
        & \widehat{Z}_{t_j,1}
    \end{pmatrix}\\
    &=
    \rank
    \begin{pmatrix}
      \mathcal{H}_{t_j}\\
      K_{t_j}^{\mathsf T}
    \end{pmatrix}
    +
    p-\Card\Dep\!\left(X_{\mathcal{T}_{t_j}}^n\right),
  \end{align*}
  where the second equality follows from
  \eqref{eq:halfline-rank-small} and the definition of $p_0$.

  Corollary~\ref{co:theorem-subsequent-moments-corollary-m=0} now
  yields
  \[
    \mult_\mu t_j
    =
    \Card A_{\mathcal{T}_{t_j}},
    \qquad j\in[k_1].
  \]
  This completes the proof.
\end{proof}

The following corollary characterizes when the representing measure in
Theorem~\ref{th:mainTheorem_halfline} can be chosen so that none of the
prescribed points is an atom.

\begin{corollary}
  \label{co:corollary_mainTheorem_RR_halfline}
  Let $n,p\in\mathbb{N}$, and let
  $t_1,\ldots,t_{k_1}\in [0,\infty)$ be pairwise distinct. Let
  \[
    L\colon\mathbb{R}[x]_{\leq 2n}\to\Sym_p(\mathbb{R})
  \]
  be a linear operator with an $[0,\infty)$--representing matrix
  measure. 
  Then the following
  statements are equivalent:
  \begin{enumerate}
    \item\label{co:corollary_mainTheorem_RR_halfline-pt0}
    There exists a finitely atomic
    $[0,\infty)$--representing matrix measure $\mu$ for $L$ such that
    \[
      t_j\notin\supp(\mu),
      \qquad j\in[k_1].
    \]
    
    \item\label{co:corollary_mainTheorem_RR_halfline-pt1}
    There exists a $(\rank M(n))$--atomic
    $[0,\infty)$--representing matrix measure $\mu$ for $L$ such that
    \[
      t_j\notin\supp(\mu),
      \qquad j\in[k_1].
    \]

    \item\label{co:corollary_mainTheorem_RR_halfline-pt2}
    For every $j\in[k_1]$, we have
    \[
      A_{\mathcal{T}_{t_j}}=\emptyset,
    \]
    where $A_{\mathcal{T}_{t_j}}$ is defined
    in~\eqref{def:def-of-A_t}.
  \end{enumerate}
\end{corollary}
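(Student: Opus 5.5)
The plan is to mirror the proof of Corollary~\ref{co:corollary_mainTheorem_RR}. Theorem~\ref{th:mainTheorem_halfline} will replace Theorem~\ref{th:mainTheorem_RR}. Corollary~\ref{co:theorem-subsequent-moments-finitely-atomic} will supply the lower bound on multiplicities. We prove the cycle \eqref{co:corollary_mainTheorem_RR_halfline-pt0} $\Rightarrow$ \eqref{co:corollary_mainTheorem_RR_halfline-pt2} $\Rightarrow$ \eqref{co:corollary_mainTheorem_RR_halfline-pt1} $\Rightarrow$ \eqref{co:corollary_mainTheorem_RR_halfline-pt0}.

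For \eqref{co:corollary_mainTheorem_RR_halfline-pt0} $\Rightarrow$ \eqref{co:corollary_mainTheorem_RR_halfline-pt2}, let $\mu$ be a finitely atomic $[0,\infty)$--representing measure for $L$ that avoids every $t_j$. Since $\supp(\mu)\subseteq[0,\infty)\subseteq\mathbb{R}$, the measure $\mu$ is also a finitely atomic $\mathbb{R}$--representing measure for the sequence $\mathcal{S}=(L(x^i))_{i\in[0;2n]}$. Corollary~\ref{co:theorem-subsequent-moments-finitely-atomic} is stated for arbitrary finitely atomic $\mathbb{R}$--representing measures, so it applies and gives
\[
  0=\mult_\mu t_j\geq\Card A_{\mathcal{T}_{t_j}},\qquad j\in[k_1].
\]
Hence $A_{\mathcal{T}_{t_j}}=\emptyset$ for every $j$. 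The only point to check here is that the sets $A_{\mathcal{T}_{t_j}}$ depend only on the moments $S_0,\ldots,S_{2n}$ and not on the ambient support set $K$. This holds by their definition in~\eqref{def:def-of-A_t}.

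For \eqref{co:corollary_mainTheorem_RR_halfline-pt2} $\Rightarrow$ \eqref{co:corollary_mainTheorem_RR_halfline-pt1}, we use the standing hypothesis that $L$ has a $[0,\infty)$--representing measure and the fact that $t_1,\ldots,t_{k_1}\in[0,\infty)$ are pairwise distinct. Theorem~\ref{th:mainTheorem_halfline} then gives a $(\rank M(n))$--atomic $[0,\infty)$--representing measure $\mu$ with $\mult_\mu t_j=\Card A_{\mathcal{T}_{t_j}}=0$. So $\mu(\{t_j\})=\mathbf{0}_p$, that is, $t_j$ is not an atom of $\mu$. Because $\mu$ is finitely atomic, its support is exactly its set of atoms, so $t_j\notin\supp(\mu)$.

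The implication \eqref{co:corollary_mainTheorem_RR_halfline-pt1} $\Rightarrow$ \eqref{co:corollary_mainTheorem_RR_halfline-pt0} is immediate, since a $(\rank M(n))$--atomic measure is finitely atomic. None of these steps is expected to be a real obstacle, because all the substantive work is already in Theorem~\ref{th:mainTheorem_halfline}. The one step needing care is the first implication: the lower bound was proved only for $\mathbb{R}$--representing measures, and it is the inclusion $[0,\infty)\subseteq\mathbb{R}$ that lets it apply unchanged.
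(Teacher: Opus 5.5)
Your proposal is correct and matches the paper's proof: the same cycle (1) $\Rightarrow$ (3) $\Rightarrow$ (2) $\Rightarrow$ (1), with Corollary~\ref{co:theorem-subsequent-moments-finitely-atomic} giving the lower bound in the first step and Theorem~\ref{th:mainTheorem_halfline} giving the measure in the second. Your remark that a $[0,\infty)$--representing measure is in particular an $\mathbb{R}$--representing measure is the justification the paper leaves implicit when it applies that corollary.
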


\begin{proof}
  Assume first that
  \eqref{co:corollary_mainTheorem_RR_halfline-pt0} holds. 
   Then
  $\mult_\mu t_j=0$,
    $j\in[k_1].$
  It follows from Corollary \ref{co:theorem-subsequent-moments-finitely-atomic} that
  $A_{\mathcal{T}_{t_j}}=\emptyset$ for every $j\in[k_1]$. Hence,
  \eqref{co:corollary_mainTheorem_RR_halfline-pt2} holds.
  
  Assume that
  \eqref{co:corollary_mainTheorem_RR_halfline-pt2} holds. By
  Theorem~\ref{th:mainTheorem_halfline}, there exists a
  $(\rank M(n))$--atomic $[0,\infty)$--representing matrix measure
  $\mu$ for $L$ such that
  $
    \mult_\mu t_j
    =
    \Card A_{\mathcal{T}_{t_j}}
    =
    0,
$
    $j\in[k_1].$
  Therefore,
  $t_j\notin\supp(\mu)$, $j\in[k_1]$,
  which proves
  \eqref{co:corollary_mainTheorem_RR_halfline-pt1}.

The implication
  $\eqref{co:corollary_mainTheorem_RR_halfline-pt1}\Rightarrow\eqref{co:corollary_mainTheorem_RR_halfline-pt0}$ is clear.
\end{proof}


We are now ready to prove Theorem~\ref{thm:main-1} in the case
$K=[0,\infty)$.

\begin{proof}[Proof of Theorem~\ref{thm:main-1} for $K=[0,\infty)$]
    The argument is identical to that for $K=\RR$ in
    Section~\ref{sec:solution-to-the-K-MRTMP}, except that
    Theorem~\ref{th:Hamburger-matricial} and
    Corollary~\ref{co:corollary_mainTheorem_RR}
    are replaced by Theorem~\ref{th:Stieltjes-matricial} and
    Corollary~\ref{co:corollary_mainTheorem_RR_halfline}, respectively.
\end{proof}


\section{Example for Theorem \ref{thm:main-1}}
\label{sec:example}

We illustrate the finite linear-algebraic test in Theorem~\ref{thm:main-1}. We construct a flat extension
and recover the representing measure from it.  The example also shows that
the pole-avoidance condition contains information not captured by the
ordinary matricial Hamburger conditions.

Let \(p=2\), \(K=\mathbb R\), and
$
  q(x)=x^2(x-1)^2.
$
Thus \(e_0=0\), \(e_1=e_2=1\), \(2n=4\) (see Section \ref{introduction}), and the real poles of $q$ are $0$ and
\(1\).  Define \(\mathcal L\colon\mathcal R^{(4)}\to\Sym_2(\mathbb R)\)
by prescribing the moments of its associated polynomial mapping
\(L(g):=\mathcal L(g/q)\):
\[
S_0=\begin{pmatrix}4&1\\1&1\end{pmatrix},\quad
S_1=\begin{pmatrix}2&3\\3&3\end{pmatrix},\quad
S_2=\begin{pmatrix}18&9\\9&9\end{pmatrix},\quad
S_3=\begin{pmatrix}26&27\\27&27\end{pmatrix},\quad
S_4=\begin{pmatrix}114&81\\81&81\end{pmatrix}.
\]
The corresponding moment matrix is
\[
M(2)=
\begin{pmatrix}
4&1&2&3&18&9\\
1&1&3&3&9&9\\
2&3&18&9&26&27\\
3&3&9&9&27&27\\
18&9&26&27&114&81\\
9&9&27&27&81&81
\end{pmatrix}.
\]
It is easy to check that 
\[
 M(2)\succeq0,
 \qquad \operatorname{rank}M(1)=3<4=\operatorname{rank}M(2),\qquad
 \text{and}\qquad
 \cC\!\left(\operatorname{col}(S_3,S_4)\right)
 \subseteq \mathcal C(M(1)).
\] 
Thus, \eqref{thm:main-1-iv-a} and \eqref{thm:main-1-iv-b} of Theorem \ref{thm:main-1} are satisfied.
It remains to check \eqref{thm:main-1-iv-c}.   Let \(M_{\mathcal T_t}(2)\) be the moment matrix
written in the shifted basis \(1,x-t,(x-t)^2\), and let
\(y^{(t)}_1,\ldots,y^{(t)}_6\) denote its columns in their natural block
order.  Set
\[
 r_j(t):=\operatorname{rank}\bigl(y^{(t)}_1,\ldots,y^{(t)}_j\bigr)
 \quad (1\leq j\leq6)
\]
and
\[
 s_j(t):=\operatorname{rank}\bigl(y^{(t)}_3,\ldots,y^{(t)}_j\bigr)
 \quad (3\leq j\leq6).
\]
The shifted moments
at \(t=1\) are
\[
T_0=\begin{pmatrix}4&1\\1&1\end{pmatrix},\quad
T_1=\begin{pmatrix}-2&2\\2&2\end{pmatrix},\quad
T_2=\begin{pmatrix}18&4\\4&4\end{pmatrix},\quad
T_3=\begin{pmatrix}-26&8\\8&8\end{pmatrix},\quad
T_4=\begin{pmatrix}114&16\\16&16\end{pmatrix}.
\]
Exact column elimination gives
\[
\begin{array}{c|c|c|c|c|c}
t &(r_1,\ldots,r_6)&(s_3,\ldots,s_6)
&\bigl(\zeta_{\cT_t}(1),\zeta_{\cT_t}(2)\bigr)
&\bigl(\zeta^{(1)}_{\cT_t}(1),\zeta^{(1)}_{\cT_t}(2)\bigr)
&A_{\cT_t}\\ \hline
0 &(1,2,3,3,4,4)&(1,2,3,3)&(\infty,1)&(\infty,2)&\varnothing\\
1 &(1,2,3,3,4,4)&(1,2,3,3)&(\infty,1)&(\infty,2)&\varnothing
\end{array}
\]
in the notation of Subsection~\ref{subsec:column-dependencies}. 
Thus condition~\eqref{thm:main-1-iv-c} of Theorem~\ref{thm:main-1} holds at both poles.  The rational moment
problem is solvable.

\noindent \textbf{A flat extension.}
Set
$S_5=\begin{pmatrix}242&243\\243&243\end{pmatrix},$
$S_6=\begin{pmatrix}858&729\\729&729\end{pmatrix},$
such that 
$\operatorname{rank}M(3)=4=\operatorname{rank}M(2)$.
Thus \(M(3)\) is a flat extension of $M(2)$.
We now recover the measure.
Write \(y_j:=y_j^{(0)}\), now regarding the columns as
columns of \(M(3)\), and group them into block columns
\[
 X^0=(y_1\ y_2),\qquad X^1=(y_3\ y_4),\qquad
 X^2=(y_5\ y_6),\qquad X^3=(y_7\ y_8).
\]
We have that
$$
    y_7=4y_1+20y_2+4y_3-y_5,
    \qquad
    y_8=27y_2.
$$
Equivalently,
\[
 X^3=X^0H_0+X^1H_1+X^2H_2,
\]
where
\[
 H_0=\begin{pmatrix}4&0\\20&27\end{pmatrix},\qquad
 H_1=\begin{pmatrix}4&0\\0&0\end{pmatrix},\qquad
 H_2=\begin{pmatrix}-1&0\\0&0\end{pmatrix}.
\]
Define
\[
 H(x)=-H_0-xH_1-x^2H_2+x^3I_2=
 \begin{pmatrix}
 x^3+x^2-4x-4&0\\
 -20&x^3-27
 \end{pmatrix}.
\]
By the above, 
\(H\) is a block column relation of \(M(3)\).  Moreover,
\[
\begin{aligned}
 \det H(x)
 &=(x^3+x^2-4x-4)(x^3-27)\\
 &=(x+2)(x+1)(x-2)(x-3)(x^2+3x+9).
\end{aligned}
\]
Since \(x^2+3x+9>0\) for every \(x\in\mathbb R\),
\[
 \mathcal Z(\det H)=\{-2,-1,2,3\}.
\]
By Lemma~\ref{lem:support-from-block-column-relation} and $\Rank M(3)=4$, it follows that
$x_1=-2$, $x_2=-1$, $x_3=2$, $x_4=3$ are the atoms of the measure.
Solving the matrix Vandermonde system
\[
 S_k=\sum_{j=1}^4 x_j^kA_j,
 \qquad k=0,1,2,3,
\]
gives
$
 A_{1}=A_{2}=A_3=
 \begin{pmatrix}1&0\\0&0\end{pmatrix},
 $
$
 A_4=\begin{pmatrix}1&1\\1&1\end{pmatrix}.
$
Hence a representing measure is
\[
 \mu=\delta_{-2}A_{1}+\delta_{-1}A_{2}
     +\delta_2A_3+\delta_3A_4.
\]
The support avoids both zeros of \(q\), as predicted by the pole test.  The
representing measure for the original rational data is
\[
 \nu=q\cdot\mu
 =36\delta_{-2}A_{1}+4\delta_{-1}A_{2}
  +4\delta_2A_3+36\delta_3A_4.
\]
Indeed, for every \(g\in\mathbb R[x]_{\leq4}\),
\[
 \int_{\mathbb R}\frac{g}{q}\,d\nu
 =\sum_{j=1}^4g(x_j)A_j
 =L(g)
 =\mathcal L\!\left(\frac{g}{q}\right).
\]
Each mass has rank one, so the total atomic multiplicity is
\(4=\operatorname{rank}M(2)\).



\end{document}